\numberwithin{equation}{section}
\DeclareMathOperator\Hom{Hom}
\DeclareMathOperator\Ext{Ext}
\newcommand{\D}{{\mathbb D}}
\newcommand{\R}{{\mathbb R}}
\newcommand{\C}{{\mathbb C}}
\newcommand{\Z}{{\mathbb Z}}
\newcommand{\K}{{\mathbb K}}
\newcommand{\dcal}{{\mathcal D}}
\newtheorem{theo}{{\sc \bf Theorem}}[section]
\newtheorem{lem}[theo]{{\sc \bf Lemma}}
\newtheorem{prop}[theo]{{\sc \bf Proposition}}
\newenvironment{defin}{\medskip\noindent{\bf Definition:\/} }{\medskip}
\begin{document}

\title{Noncommutative Geometry of the Quantum Disk}

\author[Klimek]{Slawomir Klimek}
\address{Department of Mathematical Sciences,
Indiana University-Purdue University Indianapolis,
402 N. Blackford St., Indianapolis, IN 46202, U.S.A.}
\email{sklimek@math.iupui.edu}

\author[McBride]{Matt McBride}
\address{Department of Mathematics and Statistics,
Mississippi State University,
175 President's Cir., Mississippi State, MS 39762, U.S.A.}
\email{mmcbride@math.msstate.edu}

\author[Peoples]{J. Wilson Peoples}
\address{Department of Mathematics,
Pennsylvania State University,
107 McAllister Bld., University Park, State College, PA 16802, U.S.A.}
\email{jwp5828@psu.edu}

\thanks{We would like to thank M. Khalkhali for helpful comments.}

\date{\today}

\begin{abstract}
We discuss various aspects of noncommutative geometry of a smooth subalgebra of the Toeplitz algebra. In particular, we study the structure of derivations on this subalgebra.
\end{abstract}

\maketitle
\section{Introduction}
The purpose of this paper is to study a smooth subalgebra $\mathcal{T}^\infty$ of the Toeplitz C$^*$-algebra, $\mathcal{T}$, an object that captures a smooth structure on the quantum disk \cite{KL}. We also study the related algebra $\mathcal{K}^\infty$ of smooth compact operators with respect to a basis of the Hilbert space. This work is a continuation of our previous papers on the subject of noncommutative geometry of various examples of quantum spaces, in particular \cite{KMRSW1}, \cite{KMRSW2}, \cite{KM6}.

Smooth compact operators appeared previously in Phillips' paper \cite{Ph} as well as in \cite{ENN}. The smooth Toeplitz algebra was introduced by Cuntz \cite{Cu}, see also \cite{Kh} and a recent paper \cite{Pi}. Though a portion of the content of this paper has been described before, this is an attempt at greater detail. Among several noncommutative geometry applications, smooth algebras play an important role in cyclic cohomology theory \cite{Me}. 

The noncommutative disk is a relatively simple example of a noncommutative space and hence it is a good testing ground to illustrate various general concepts of noncommutative geometry \cite{Connes}. The quantum disk has many properties analogous to the classical unit disk $\D$. It is a C$^*$-algebra generated by a single generator  whose norm is 1 and the spectrum is all of $\D$. There is a short exact sequence:
\begin{equation*}
0\rightarrow \mathcal{K}\rightarrow\mathcal{T}\rightarrow C(S^1)\rightarrow 0 ,
\end{equation*}
which should be compared with the short exact sequence for the classical disk:
\begin{equation*}
0\rightarrow C_0(\D)\rightarrow C(\D)\rightarrow C(S^1)\rightarrow 0,
\end{equation*}
where $C_0(\D)$ are the continuous functions on the disk that vanish on the boundary. The quantum disk $\mathcal{T}$ has natural noncommutative polar coordinates \cite{KMRSW1}, employed for example in the equation \eqref{poly_U} below, which is an analog of the partial polar Fourier decomposition of functions on the usual unit disk. Additionally, the group SU(1,1)/$\Z_2$ of holomorphic automorphisms of $\D$ acts in an analogous way on the noncommutative disk $\mathcal{T}$, see \cite{KL}.

The smooth subalgebra of the Toeplitz C$^*$-algebra is made of Toeplitz operators with smooth symbols and of smooth compact operators. We explicitly construct appropriate Frechet structures on $\mathcal{K}^\infty$ and $\mathcal{T}^\infty$ and prove that those algebras are closed under holomorphic calculus so that they have the same K-Theory as their corresponding C$^*$-algebra closures. We also verify that they are closed under smooth functional calculus of self-adjoint elements.

Most of the paper is focused on describing derivations on relevant C$^*$-algebra and on their smooth subalgebras. In particular, using results from \cite{KMRSW1}, \cite{KMRSW2}, we classify derivations on $\mathcal{T}^\infty$ and show that, up to inner derivations, they are lifts of derivations on $C^\infty(\R/\Z)$, the factor algebra of $\mathcal{T}^\infty$ modulo the ideal $\mathcal{K}^\infty$ of smooth compact operators. Additionally, we discuss an action of M\"obius transformations on $\mathcal{T}^\infty$ and various aspect of K-Theory, K-Homology and cyclic cohomology of the quantum disk.

The paper is organized as follows.  In section 2 we introduce and study smooth compact operators. Section 3 contains a detail discussion of smooth Toeplitz algebra, while in sections 4 and 5 we investigate the structure and classifications of derivations and M\"obius automorphisms. Finally, section 6 contains remarks on K-Theory, K-Homology and cyclic cohomology.

\section{Smooth Compact Operators}
\subsection{Basic Definitions}
Let $\mathcal{K}(H)$ be the algebra of compact operators on a (separable) Hilbert space $H$.  A choice of an orthonormal basis $\{E_k\}_{k\ge0}$ of $H$ determines a system of units $\{P_{k,l}\}_{k,l\ge0}$ in $\mathcal{K}(H)$ that satisfy the following relations:
\begin{equation*}
P_{k,l}^* = P_{l,k} \quad\textrm{and}\quad P_{k,l}P_{r,s} = \chi_{l,r}P_{k,s}\,,
\end{equation*}
where $\chi_{l,r}=1$ for $l=r$ and is equal to zero otherwise.  The {\it smooth compact operators} with respect to $\{E_k\}$ is the set of operators of the form
\begin{equation*}
c = \sum_{k,l\ge0}c_{k,l}P_{k,l}\,,
\end{equation*}
so that the coefficients $\{c_{k,l}\}_{k,l\ge0}$ are rapidly decaying (RD).  We denote the set of smooth compact operators by $\mathcal{K}^\infty(H)$ and since $H$ is fixed here, we drop it in the notation for brevity.

This definition does depend on the choice of a basis for $H$.  For example, if $x$ is a unit vector in $H$ which is not a RD linear combination of $\{E_k\}$, then the orthogonal projection  $P_{\{x\}}$ onto the line spanned by $x$ is not a smooth compact operator with respect to the basis $\{E_k\}$.  However, it is smooth with respect to any basis which contains $x$.  

\subsection{0,N - Norms}
We will rephrase the definition of smooth compact operators using norms and put a Fr\'{e}chet space structure on $\mathcal{K}^\infty$. These norms are more complicated than those in \cite{Ph} but offer some technical advantages.

Let $\K$ be the following unbounded, positive diagonal operator:
\begin{equation*}
\K E_k = kE_k\,.
\end{equation*}
It is easy to see the following relations with $P_{k,l}$:
\begin{equation*}
\K P_{k,l} = kP_{k,l}\quad\textrm{and}\quad P_{k,l}\K = lP_{k,l}\,.
\end{equation*}
For any bounded operator $a$ in $H$ we set
\begin{equation*}
\|a\|_{0,N} = \|a(I+\K)^N\|\in[0,\infty]\,,
\end{equation*}
where $N\in\Z_{\ge0}$. 

The proposition below summarizes the basic properties of $\|\cdot\|_{0,N}$.
\begin{prop}\label{N-norm_basics}
Let $a$ and $b$ be bounded operators in $H$, then
\begin{enumerate}
\item $\|a\|_{0,0}=\|a\|$.
\item $\|a\|_{0,N}\le \|a\|_{0,N+1}$.
\item $\|a\|_{0,N}\le \|a(I+\K)^N\|_{\textrm{HS}}\le\sqrt{\pi^2/6}\cdot\|a\|_{0,N+1}$, where $\|\cdot\|_{\textrm{HS}}$ is the Hilbert-Schmidt norm in on $B(H)$.
\item $\|ab\|_{0,N}\le\|a\|_{0,0}\|b\|_{0,N}\le \|a\|_{0,N}\|b\|_{0,N}$.
\end{enumerate}
\end{prop}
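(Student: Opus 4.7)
The four claims are straightforward consequences of the functional calculus for the self-adjoint operator $\K$, together with one convergent series. My plan is to dispatch them in the order (1), (2), (4), (3), since (4) is needed conceptually to see that the chain in (3) has the right shape, and (3) contains the only nontrivial estimate.

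First, for (1) I would just note that $(I+\K)^0 = I$, so $\|a\|_{0,0} = \|aI\| = \|a\|$. For (2), I would observe that $(I+\K)^{-1}$ is bounded with $\|(I+\K)^{-1}\| = \sup_{k\ge 0}(1+k)^{-1} = 1$ by the functional calculus, and then write
\begin{equation*}
\|a\|_{0,N} = \|a(I+\K)^{N+1}(I+\K)^{-1}\| \le \|a(I+\K)^{N+1}\|\,\|(I+\K)^{-1}\| \le \|a\|_{0,N+1}.
\end{equation*}
For (4), the first inequality is the submultiplicativity of the operator norm applied to the factorization $ab(I+\K)^N = a\cdot b(I+\K)^N$, and the second is just (2) iterated from $N=0$ to $N$.

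For (3), the first inequality $\|T\| \le \|T\|_{\mathrm{HS}}$ applied to $T = a(I+\K)^N$ is a standard fact about the operator norm versus the Hilbert-Schmidt norm. For the second, the key observation is that $(I+\K)^{-1}$ is diagonal in the basis $\{E_k\}$ with eigenvalues $(1+k)^{-1}$. Writing $a(I+\K)^N = \bigl[a(I+\K)^{N+1}\bigr](I+\K)^{-1}$ and computing the Hilbert-Schmidt norm as a sum over the basis $\{E_k\}$, I would get
\begin{equation*}
\|a(I+\K)^N\|_{\mathrm{HS}}^2 = \sum_{k\ge 0}\bigl\|a(I+\K)^{N+1}(I+\K)^{-1}E_k\bigr\|^2 = \sum_{k\ge 0}\frac{\|a(I+\K)^{N+1}E_k\|^2}{(1+k)^2},
\end{equation*}
which is bounded by $\|a(I+\K)^{N+1}\|^2 \sum_{k\ge 0}(1+k)^{-2} = \|a\|_{0,N+1}^2\cdot\pi^2/6$. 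Taking square roots gives the claim.

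The only ``obstacle'' to watch for is making sure the factorization used in (2) and (3) is meaningful, i.e.\ that $(I+\K)^{-1}$ is a bona fide bounded operator; this is immediate because $\K\ge 0$ so $1+k\ge 1$ for every eigenvalue. Everything else is bookkeeping: the convergent sum $\sum (1+k)^{-2} = \pi^2/6$ (Basel) is what produces the constant $\sqrt{\pi^2/6}$ appearing in the statement.
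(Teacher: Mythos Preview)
Your proof is correct and follows essentially the same approach as the paper. The only cosmetic difference is in (3): the paper invokes the abstract inequality $\|AB\|_{\mathrm{HS}} \le \|A\|\,\|B\|_{\mathrm{HS}}$ with $A = a(I+\K)^{N+1}$ and $B = (I+\K)^{-1}$ (computing $\|(I+\K)^{-1}\|_{\mathrm{HS}} = \sqrt{\pi^2/6}$), whereas you unwind that same inequality explicitly as a sum over the basis $\{E_k\}$---but this is the same argument.
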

Note that if $a$ and $b$ are in $\mathcal{K}^\infty$, then clearly all of the above norms in the proposition are finite.
\begin{proof}
The statement $(1)$ is immediate from the definition $\|\cdot\|_{0,N}$.  Statement $(2)$ follows from the fact that
\begin{equation*}
\|(I+\K)^{-1}\|=1\,.
\end{equation*}
Since $\|a\|\le\|a\|_{\textrm{HS}}$, the first inequality in statement $(3)$ follows.  To see the second inequality in statement $(3)$, notice that
\begin{equation*}
\begin{aligned}
 \|a(I+\K)^N\|_{\textrm{HS}} &= \|a(I+\K)^{N+1}(I+\K)^{-1}\|_{\textrm{HS}}\le \|a(I+\K)^{N+1}\|\,\|(I+\K)^{-1}\|_{\textrm{HS}} \\
&= \|a(I+\K)^{N+1}\|\left(\sum_{k=0}^\infty \frac{1}{(1+k)^2}\right)^{1/2} = \sqrt{\frac{\pi^2}{6}}\cdot\|a(I+\K)^{N+1}\|
\end{aligned}
\end{equation*}
where $\|ab\|_{\textrm{HS}}\le \|a\|\|b\|_{\textrm{HS}}$ was used.  Statement $(1)$ and $(2)$ imply that $\|a\|\le\|a\|_{0,N}$ and thus statement $(4)$ follows using $\|ab\|_{\textrm{HS}}\le \|a\|\|b\|_{\textrm{HS}}$ again.  This completes the proof.
\end{proof}

Notice that statement $(3)$ means that the norms $\|\cdot\|_{0,N}$ are equivalent to the norms $ \|a(I+\K)^N\|_{\textrm{HS}}$. Additionally, if $\{a_{kl}\}$ are the matrix coefficients of $a$ in the basis $\{E_k\}$, then
\begin{equation*}
\begin{aligned}
 \|a(I+\K)^N\|_{\textrm{HS}}^2 &= \textrm{tr}\left((I+\K)^Na^*a(I+\K)^N\right) = \textrm{tr}\left((I+\K)^{2N}\sum_{k,l}\overline{a}_{k,l}P_{l,k}\sum_{r,s}a_{r,s}P_{r,s}\right) \\
&=\textrm{tr}\left(\sum_{k,l,s}(1+l)^{2N}\overline{a}_{k,l}a_{k,s}P_{l,s}\right) = \sum_{k,l}(1+l)^{2N}|a_{k,l}|^2\,.
\end{aligned}
\end{equation*}
Thus, the existence of $\|a\|_{0,N}$ is equivalent to the convergence of the positive series above.  In fact, the above calculation shows that $\|a\|_{0,N}$ captures the RD property in one of the indices of $a_{k,l}$.

To proceed further we need the following derivation on $\mathcal{K}^\infty$:
\begin{equation*}
\delta_\K(a) = [\K,a]\,.
\end{equation*}
Clearly $\delta_\K$ is linear and satisfies the Leibniz rule as it is a commutator.  Notice that
\begin{equation}\label{der_k_on_proj}
\delta_\K(P_{k,l}) = (k-l)P_{k,l}\quad\textrm{and}\quad \delta_\K(c) = \sum_{k,l}(k-l)c_{k,l}P_{k,l}
\end{equation}
for a given $a\in\mathcal{K}^\infty$.  Since $\{a_{k,l}\}$ is a RD sequence, it follows that $\{(k-l)a_{k,l}\}$ is a RD sequence and hence $\delta_\K:\mathcal{K}^\infty\to\mathcal{K}^\infty$ is well-defined.  Moreover, from the definition of $\delta_\K$ we have $\delta_\K(a^*)=-(\delta_\K(a))^*$.  The next proposition describes an important property $\|\cdot\|_{0,N}$ enjoys in relation to the adjoint.

\begin{prop}\label{N-norm_star}
Given a bounded operator $a$, we have
\begin{equation*}
\|a^*\|_{0,N} \le \sum_{j=0}^N\begin{pmatrix} N \\ j \end{pmatrix}\|\delta_\K^j(a)\|_{0,N}\,.
\end{equation*}
\end{prop}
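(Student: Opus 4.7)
The plan is to rewrite $\|a^*\|_{0,N}$ in a form where the unbounded factor $(I+\K)^N$ sits to the left of $a$, then commute it past $a$ via a binomial expansion in the derivation $\delta_\K$. First I would use the fact that taking adjoints preserves the operator norm to write
\begin{equation*}
\|a^*\|_{0,N} \;=\; \|a^*(I+\K)^N\| \;=\; \|((I+\K)^N a)^*\| \;=\; \|(I+\K)^N a\|,
\end{equation*}
noting that $(I+\K)^N$ is self-adjoint.

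Next I would establish the commutation identity
\begin{equation*}
(I+\K)^N a \;=\; \sum_{j=0}^N \binom{N}{j} \delta_\K^j(a)\,(I+\K)^{N-j}.
\end{equation*}
The clean way is to observe that if $L_\K$ and $R_\K$ denote left and right multiplication by $\K$, then $L_\K$ and $R_\K$ commute, $\delta_\K = L_\K - R_\K$, and hence $L_{I+\K} = R_{I+\K} + \delta_\K$ with the two summands commuting. The ordinary binomial theorem then gives
\begin{equation*}
L_{I+\K}^N \;=\; \sum_{j=0}^N \binom{N}{j} R_{I+\K}^{N-j}\,\delta_\K^j,
\end{equation*}
and applying both sides to $a$ yields the desired identity. (Alternatively one can prove it directly by induction on $N$, using $(I+\K)a = a(I+\K) + \delta_\K(a)$ as the base case.)

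From here the proof is just the triangle inequality followed by monotonicity: applying the triangle inequality to the identity above gives
\begin{equation*}
\|(I+\K)^N a\| \;\le\; \sum_{j=0}^N \binom{N}{j} \bigl\|\delta_\K^j(a)(I+\K)^{N-j}\bigr\| \;=\; \sum_{j=0}^N \binom{N}{j}\,\|\delta_\K^j(a)\|_{0,N-j},
\end{equation*}
and then part (2) of Proposition \ref{N-norm_basics} upgrades each $\|\delta_\K^j(a)\|_{0,N-j}$ to $\|\delta_\K^j(a)\|_{0,N}$, completing the bound.

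There is no real obstacle here: the only thing to be a bit careful about is the bookkeeping in the binomial expansion, specifically that $R_{I+\K}$ and $\delta_\K$ genuinely commute as operators on $B(H)$ so that the standard binomial formula applies without extra correction terms. Everything else is routine once the identity is in place.
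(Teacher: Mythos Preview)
Your proof is correct. The paper instead argues by induction on $N$: writing $\|a^*\|_{0,N+1}=\|((I+\K)a)^*\|_{0,N}$, applying the inductive hypothesis to $(I+\K)a$, and then using $(I+\K)a=a(I+\K)+\delta_\K(a)$ together with Pascal's rule to reassemble the binomial coefficients. Your approach is a direct one: you expand $(I+\K)^Na$ all at once via the binomial identity (which is exactly the paper's later formula \eqref{partial_delta}, rearranged), then apply the triangle inequality and monotonicity. The two arguments are really two packagings of the same commutation algebra; yours is slightly cleaner and avoids the inductive bookkeeping, at the cost of front-loading the identity $(I+\K)^Na=\sum_j\binom{N}{j}\delta_\K^j(a)(I+\K)^{N-j}$, while the paper's induction makes that identity implicit. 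One small caveat worth noting in your write-up: since $\K$ is unbounded, the left/right multiplication operators $L_{I+\K},R_{I+\K}$ are not honest operators on $B(H)$, so the binomial manipulation should be understood either on a suitable dense domain or, as the paper does throughout, with all norms interpreted in $[0,\infty]$.
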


\begin{proof}
For $N=0$, the inequality is trivial as $\|a^*\| = \|a\|$.  We proceed by induction.  Suppose the inequality is true for some $N$.  Notice that $\|a(I+\K)\|_{0,N} = \|a\|_{0,N+1}$.  Using this fact, Proposition \ref{N-norm_basics} and the induction hypothesis we have:
\begin{equation*}
\begin{aligned}
&\|a^*\|_{0,N+1} = \|a^*(I+\K)\|_{0,N} =\|((I+\K)a)^*\|_{0,N}\le \sum_{j=0}^N\begin{pmatrix} N \\ j \end{pmatrix}\|\delta_\K^j((I+\K)a)\|_{0,N} \\
&
\le \sum_{j=0}^N \begin{pmatrix} N \\ j \end{pmatrix} \|\delta_\K^j(a(I+\K))\|_{0,N} + \sum_{j=0}^N \begin{pmatrix} N \\ j \end{pmatrix}\|\delta_\K^{j+1}(a)\|_{0,N} \\
&\le \sum_{j=0}^N \begin{pmatrix} N \\ j \end{pmatrix}\|\delta_\K^j(a)\|_{0,N+1} + \sum_{j=1}^{N+1} \begin{pmatrix} N \\ j-1 \end{pmatrix}\|\delta_\K^j(a)\|_{0,N+1}= \sum_{j=0}^{N+1} \begin{pmatrix} N+1 \\ j \end{pmatrix}\|\delta_\K^j(a)\|_{0,N+1}\,.
\end{aligned}
\end{equation*}
\end{proof}

\subsection{M,N - Norms}
As mentioned earlier, $\|\cdot\|_{0,N}$ only captures the RD property in one index, so, to encompass both indices, we define the convenient norms based on $\|\cdot\|_{0,N}$.  For $M$ and $N$ in $\Z_{\ge0}$, given a bounded operator $a$ in $H$ we define:
\begin{equation*}
\|a\|_{M,N} = \sum_{j=0}^M\begin{pmatrix} M \\ j \end{pmatrix}\|\delta_\K^j(a)\|_{0,N}\,,
\end{equation*}
with $\delta_\K^0(a):=a$. 
Some care has to be taken to interpret the above expression, since $\delta_\K$ is an unbounded derivation. One way to handle this is to notice that $\|\cdot\|_{M,N}$ norms are equivalent, by Proposition \ref{N-norm_star}, to seminorms $\|\delta_\K^j(a)(I+\K)^N\|_{\textrm{HS}}$. In terms of the matrix coefficients we have
\begin{equation}\label{delta_matrix}
\|\delta_\K^j(a)(I+\K)^N\|_{\textrm{HS}}^2 = \sum_{k,l}(1+l)^{2N}(k-l)^{2j}|a_{k,l}|^2\,.
\end{equation}
Consequently, we can make sense of $\|a\|_{M,N}$ as a number in $[0,\infty]$ for any bounded operator.   The following proposition summarizes the basic properties of $\|\cdot\|_{M,N}$.

\begin{prop}\label{MN-norm_basics}
Let $a$ and $b$ be bounded operators in $H$, then
\begin{enumerate}
\item $a\in\mathcal{K}^\infty$ if and only if $\|a\|_{M,N}<\infty$ for all nonnegative integers $M$ and $N$.
\item $\|a\|_{M+1,N} = \|a\|_{M,N} + \|\delta_\K(a)\|_{M,N}$.
\item $\|a\|_{M,N} \le \|a\|_{M,N+1}$.
\item $\|ab\|_{M,N} \le \|a\|_{M,0}\|b\|_{M,N} \le \|a\|_{M,N}\|b\|_{M,N}$.
\item $\|\delta_\K(a)\|_{M,N}\le \|a\|_{M+1,N}$.
\item $\|a^*\|_{M,N}\le \|a\|_{M+N,N}$.
\item $\mathcal{K}^\infty$ is a complete topological vector space.
\end{enumerate}
\end{prop}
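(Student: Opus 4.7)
The plan is to treat the seven parts in the order that lets each rely on its predecessors, using the HS–expression \eqref{delta_matrix} as the workhorse whenever needed.

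For (1), I use that, by Proposition \ref{N-norm_star}, the seminorm $\|a\|_{M,N}$ is equivalent to $\|\delta_\K^M(a)(I+\K)^N\|_{\textrm{HS}}$, whose square is $\sum_{k,l}(1+l)^{2N}(k-l)^{2M}|a_{k,l}|^2$. Since $(k-l)^{2M}(1+l)^{2N}$, as $M,N$ range over $\Z_{\ge 0}$, dominates every polynomial in $k$ and $l$ (via the elementary bound $k\le |k-l|+l$), finiteness of all $\|a\|_{M,N}$ is equivalent to rapid decay of $\{a_{k,l}\}$ in both indices, i.e. $a\in\mathcal{K}^\infty$. Part (2) is a one-line calculation using Pascal's identity $\binom{M+1}{j}=\binom{M}{j}+\binom{M}{j-1}$ and a reindexing, and part (3) is immediate from Proposition \ref{N-norm_basics}(2) applied term-by-term. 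Part (5) is a rewriting of (2).

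For (4), the plan is to apply the Leibniz formula $\delta_\K^j(ab)=\sum_{i=0}^{j}\binom{j}{i}\delta_\K^i(a)\delta_\K^{j-i}(b)$, bound each product with Proposition \ref{N-norm_basics}(4), and then reorganize the double sum using the identity $\binom{M}{j}\binom{j}{i}=\binom{M}{i}\binom{M-i}{j-i}$, followed by the estimate $\binom{M-i}{j-i}\le \binom{M}{j-i}$. This turns the double sum into a product $\|a\|_{M,0}\|b\|_{M,N}$, after which the second inequality is just (3). For (6), I combine Proposition \ref{N-norm_star} applied to each $\delta_\K^j(a)$ (using $\delta_\K^j(a^*)=(-1)^j(\delta_\K^j(a))^*$) with the Vandermonde identity $\sum_{j+i=k}\binom{M}{j}\binom{N}{i}=\binom{M+N}{k}$, which collapses the resulting double sum to $\|a\|_{M+N,N}$.

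Part (7) is the main obstacle, since $\mathcal{K}^\infty$ is only a vector space (not normed), so completeness means sequential completeness for the Fr\'echet topology generated by all $\|\cdot\|_{M,N}$. Let $\{a^{(n)}\}$ be Cauchy in every $\|\cdot\|_{M,N}$. In particular it is Cauchy in operator norm, hence converges to some bounded $a\in B(H)$, and its matrix entries $a^{(n)}_{k,l}$ converge pointwise to $a_{k,l}$. For fixed $M,N$, the equivalence in (1) shows that the sequences $\bigl\{(1+l)^{N}(k-l)^{j}a^{(n)}_{k,l}\bigr\}_{k,l}$ (for $0\le j\le M$) are Cauchy, and therefore convergent, in $\ell^2(\Z_{\ge 0}^2)$; the pointwise limit identifies the $\ell^2$–limit as $(1+l)^{N}(k-l)^{j}a_{k,l}$. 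This gives both $\|a\|_{M,N}<\infty$ for every $M,N$ (so $a\in\mathcal{K}^\infty$ by (1)) and $\|a-a^{(n)}\|_{M,N}\to 0$, which establishes completeness.
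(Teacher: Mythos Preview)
Your proof is correct. Parts (1)--(3), (5), and (7) follow the same route as the paper (with your (7) spelled out in more detail than the paper's one-line ``usual completeness arguments for spaces of sequences''). The genuine differences are in (4) and (6). For (4), the paper argues by induction on $M$: it uses the identity $\|ab\|_{M+1,N}=\|ab\|_{M,N}+\|\delta_\K(ab)\|_{M,N}$ from part (2), expands $\delta_\K(ab)$ by the one-step Leibniz rule, applies the inductive hypothesis to each piece, and then refactors. You instead unfold the full Leibniz expansion of $\delta_\K^j(ab)$ and manipulate the resulting double sum via the binomial identity $\binom{M}{j}\binom{j}{i}=\binom{M}{i}\binom{M-i}{j-i}$. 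Similarly, for (6) the paper again inducts on $M$ (the base case being exactly Proposition~\ref{N-norm_star}), while you apply Proposition~\ref{N-norm_star} termwise and collapse the double sum in one shot with Vandermonde's identity. Both strategies are clean; the inductive arguments in the paper mirror the recursive structure of the $\|\cdot\|_{M,N}$ norms encoded in (2), whereas your combinatorial proofs are one-pass and make the binomial weights in the definition do the work directly.
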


\begin{proof}
Statement (1) follows from the equivalence of $\|\cdot\|_{M,N}$ with the following seminorms $\|\delta_\K^j(a)(I+\K)^N\|_{\textrm{HS}}$ along with the formula  \eqref{delta_matrix} for the later norms in terms of the matrix coefficients.  Using the binomial coefficient theorem and resumming we have
\begin{equation*}
\begin{aligned}
\|a\|_{M+1,N} &= \sum_{j=0}^{M+1} \begin{pmatrix} M+1 \\ j \end{pmatrix}\|\delta_\K^j(a)\|_{0,N} = \sum_{j=0}^{M+1}\left(\begin{pmatrix} M \\ j \end{pmatrix} + \begin{pmatrix} M \\ j-1 \end{pmatrix}\right)\|\delta_\K^j(a)\|_{0,N} \\
&=\sum_{j=0}^M \begin{pmatrix} M \\ j \end{pmatrix} \|\delta_\K^j(a)\|_{0,N} + \sum_{j=0}^M \begin{pmatrix} M \\ j \end{pmatrix}\|\delta_\K^j(\delta_\K(a))\|_{0,N} = \|a\|_{M,N} + \|\delta_\K(a)\|_{M,N}\,.
\end{aligned}
\end{equation*}
This proves statement (2).  Statement (3) is a consequence of Proposition \ref{N-norm_basics}.  Proving statement (4) requires induction on $M$, the submultiplicative property in $N$ from Proposition \ref{N-norm_basics} and statement (2).  Indeed, the case $M=0$ is just the submultiplicative property of norms $\|\cdot\|_{0,N}$.  Now suppose the statement is true for some $M$.  Using statement (2) and the induction hypothesis, we get
\begin{equation*}
\begin{aligned}
&\|ab\|_{M+1,N} = \|ab\|_{M,N} + \|\delta_\K(ab)\|_{M,N} \le \|a\|_{M,0}\|b\|_{M,N} + \|\delta_\K(a)b\|_{M,N} + \|a\delta_\K(b)\|_{M,N} \\
&\le \|a\|_{M,0}\|b\|_{M,N} + \|\delta_\K(a)\|_{M,0}\|b\|_{M,N}+ \|a\|_{M,0}\|\delta_\K(b)\|_{M,N} + \|\delta_\K(a)\|_{M,0}\|\delta_\K(b)\|_{M,N} \\
&=\|a\|_{M+1,0}\|b\|_{M+1,N}\,.
\end{aligned}
\end{equation*}
Statement (5) immediately follows from (2).
To prove statement (6), we proceed by induction on $M$.  The case $M=0$ reduces to Proposition \ref{N-norm_star}.  Now suppose the statement is true for some $M$.  By using statement (3) and the induction hypothesis, we have
\begin{equation*}
\begin{aligned}
\|a^*\|_{M+1,N} &=\|a^*\|_{M,N} + \|\delta_\K(a^*)\|_{M,N} = \|a^*\|_{M,N} + \|(\delta_\K(a))^*\|_{M,N} \\
&\le \|a\|_{M+N,N} + \|\delta_\K(a)\|_{M+N,N} = \|a\|_{M+N+1,N}\,.
\end{aligned}
\end{equation*}
Statement (7) follows from the usual completeness arguments for spaces of sequences.
\end{proof}
This proposition implies that $\mathcal{K}^\infty$ is a Fr\'{e}chet $*$-algebra with respect to the norms $\|\cdot\|_{M,N}$.

\subsection{Estimating Norms}
There are many other choices of norms or seminorms giving the same topology on $\mathcal{K}^\infty$. Some of the advantages of norms $\|\cdot\|_{M,N}$ is that they are submultiplicative and they can be worked with without using matrix coefficients of operators. In practice, to establish that norms $\|a\|_{M,N}$ are finite, it is obviously enough to establish that the seminorms  
$$\|\delta_\K^l(a)\|_{0,N}$$ are finite for every $l$ and $N$. Other equivalent choices are above considered Hilbert-Schmidt based seminorms 
$$\|\delta_\K^j(a)(I+\K)^N\|_{\textrm{HS}}$$ and also 
$$\|(I+\K)^Na(I+\K)^M\|.$$

Another convenient option is given by the seminorms below.
Consider the following continuous derivations on $\mathcal{K}^\infty$:
\begin{equation*}
\partial_l(a) = [(I+\K)^l,a]\,.
\end{equation*}
Clearly, we have $\partial_1=\delta_{\K}$. Additionally, the following relations, easily established by induction, relate $\partial_l$ with powers of $\delta_{\K}$:
\begin{equation}\label{partial_delta}
\partial_l(a) = \sum_{j=1}^{l}\begin{pmatrix} l \\ j \end{pmatrix} \delta_{\K}^j(a)(I+\K)^{l-j}\,.
\end{equation}
Similarly, we have:
\begin{equation*}
\delta_{\K}^l(a) = \sum_{j=1}^{l} (-1)^{l-j}\begin{pmatrix} l \\ j \end{pmatrix} \partial_j(a)(I+\K)^{l-j}\,.
\end{equation*}
Here we use the convention $(I+\K)^0=I$. Those formulas lead to the following observation.
\begin{prop}\label{equiv_norms}
The norms $\|a\|_{M,N}$, $M,N=0,1,2\ldots$ are equivalent to the seminorms $\|\partial_l(a)\|_{0,N}$, $l,N=0,1,2\ldots$.
\end{prop}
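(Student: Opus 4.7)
The plan is to prove equivalence by bounding each seminorm of one family by a finite linear combination of seminorms from the other, using the two explicit transformation formulas already established: the expansion \eqref{partial_delta} of $\partial_l$ in terms of $\delta_\K^j$, and the displayed inverse formula of $\delta_\K^l$ in terms of $\partial_j$. Once these two pointwise inequalities are in hand, equivalence follows immediately from the definition $\|a\|_{M,N} = \sum_{j=0}^M \binom{M}{j}\|\delta_\K^j(a)\|_{0,N}$ together with the monotonicity properties in Proposition \ref{MN-norm_basics}(2),(3).

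First I would bound $\|\partial_l(a)\|_{0,N}$ by a single $\|\cdot\|_{M,N}$ seminorm. Applying $\|\cdot\|_{0,N}$ to \eqref{partial_delta} and using $(I+\K)^{l-j}(I+\K)^N = (I+\K)^{N+l-j}$ yields
\begin{equation*}
\|\partial_l(a)\|_{0,N} \le \sum_{j=1}^{l} \binom{l}{j}\|\delta_\K^j(a)\|_{0,N+l-j}.
\end{equation*}
Each summand appears (with coefficient $1$) in $\|a\|_{j, N+l-j}$, and by the monotonicity of $\|a\|_{M,N}$ in both indices we can dominate every term by $\|a\|_{l,N+l}$, giving the clean estimate $\|\partial_l(a)\|_{0,N} \le 2^l\, \|a\|_{l,N+l}$.

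For the reverse direction I would apply the same procedure to the inverse relation $\delta_\K^l(a) = \sum_{j=1}^l (-1)^{l-j}\binom{l}{j}\partial_j(a)(I+\K)^{l-j}$ to obtain
\begin{equation*}
\|\delta_\K^l(a)\|_{0,N} \le \sum_{j=1}^{l} \binom{l}{j}\|\partial_j(a)\|_{0,N+l-j},
\end{equation*}
and then substitute into the definition of $\|a\|_{M,N}$ to express it as a finite linear combination of seminorms $\|\partial_j(a)\|_{0,N'}$ with $j\le M$ and $N'\le N+M$, plus the $j=0$ contribution $\|a\|_{0,N}$.

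The only mild obstacle is that the $l=0$ entry of the $\partial$-family is degenerate: $\partial_0(a)=[I,a]=0$, so literally the seminorm $\|a\|_{0,N}$ is not recovered from $\{\|\partial_l(a)\|_{0,N}\}_{l\ge 1}$. The natural remedy, consistent with the convention $\delta_\K^0(a):=a$ used in the definition of $\|a\|_{M,N}$, is to adopt $\partial_0(a) := a$ (equivalently, to append $\|a\|_{0,N}$ to the $\partial$-family). With this harmless convention, the two directions above give the required two-sided equivalence of seminorm families, and the proof reduces to a straightforward bookkeeping of binomial coefficients—no analytic difficulty beyond the index tracking.
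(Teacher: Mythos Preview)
Your proposal is correct and follows essentially the same approach as the paper: the paper's proof is the single sentence ``This is a direct consequence of the definition of $\|\cdot\|_{M,N}$ norms and the relations between $\partial_l$ and $\delta_{\K}$ above,'' and you have simply spelled out the bookkeeping implicit in that sentence. Your observation about the degenerate case $\partial_0(a)=[I,a]=0$ is a genuine clarification the paper leaves tacit; your remedy of adopting $\partial_0(a):=a$ (parallel to the paper's convention $\delta_\K^0(a):=a$) is the natural reading and makes the statement literally true.
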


\begin{proof} This is a direct consequence of the definition of $\|\cdot\|_{M,N}$ norms and the relations between $\partial_l$ and $\delta_{\K}$ above.
\end{proof}

\subsection{Fourier Series for Smooth Compact Operators}
Let $U:H\to H$ be the following shift operator:
\begin{equation*}
UE_k = E_{k+1}\,.
\end{equation*}
This operator is an isometry, $U^*U=I$ and it could be used for the following useful, alternative expansion of elements in $\mathcal{K}^\infty$.  If $f:\Z_{\ge0}\to\C$ is a sequence of numbers, we write $f(\K)$ for the diagonal operator
\begin{equation*}
f(\K)E_k = f(k)E_k
\end{equation*}
for $k\in\Z_{\ge0}$.  The diagonal operator $f(\K)$ and the shift $U$ satisfy the following commutation relation:
\begin{equation}\label{comm_rel}
f(\K)U = Uf(\K + I)\,.
\end{equation}
Consider operators given by the infinite, formal series
\begin{equation}\label{series_rep}
a= \sum_{n\ge0} U^na_n(\K) + \sum_{n<0}a_n(\K)(U^*)^{-n}\,.
\end{equation}
Notice that we have:
\begin{equation}\label{delta_k_1}
\delta_\K(U)=U,\ \delta_\K(U^*)=-U^*,\textrm{ and } \delta_\K(f(\K))=0.
\end{equation}  
Thus, if $a$ is of the form given by equation \eqref{series_rep}, we have
\begin{equation*}
\delta_\K(a) = \sum_{n\ge0}nU^n a_n(\K) + \sum_{n<0}na_n(\K)(U^*)^{-n}\,.
\end{equation*}
Moreover, a direct computation shows that
\begin{equation*}
\|a(I+\K)^N\|_{\textrm{HS}}^2 = \sum_{n\ge0}\sum_{k\ge0}(1+k)^{2N}|a_n(k)|^2 + \sum_{n<0}\sum_{k\ge0}(1+k-n)^{2N}|a_n(k)|^2
\end{equation*}
and consequently, we have:
\begin{equation*}
\|\delta_\K^j(a)(I+\K)^N\|_{\textrm{HS}}^2 = \sum_{n\ge0}\sum_{k\ge0}n^{2j}(1+k)^{2N}|a_n(k)|^2 + \sum_{n<0}\sum_{k\ge0}n^{2j}(1+k-n)^{2N}|a_n(k)|^2\,.
\end{equation*}
From these formulas we deduce the following proposition.

\begin{prop}
An operator $a$ of the form given by equation \eqref{series_rep} is in $\mathcal{K}^\infty$ if and only if $\{a_n(k)\}_{n\in\Z,k\in\Z_{\ge0}}$ is a RD sequence in both $n$ and $k$.  Moreover, any $a\in\mathcal{K}^\infty$ can be uniquely written as convergent series given by equation \eqref{series_rep}.
\end{prop}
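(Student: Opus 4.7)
The plan is to exploit the bijection between matrix coefficients of $a$ and the coefficient functions $a_n(k)$ implicit in \eqref{series_rep}, and then invoke the Hilbert-Schmidt norm formulas already established.

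First I would make the bijection explicit. A direct computation using $UE_k=E_{k+1}$ shows that for $n\ge 0$ the operator $U^n a_n(\K)$ has matrix coefficients supported on the $n$-th subdiagonal with $(U^n a_n(\K))_{k+n,k}=a_n(k)$, while for $n<0$ the operator $a_n(\K)(U^*)^{-n}$ is supported on the $|n|$-th superdiagonal with value $a_n(k)$ at position $(k,k-n)$. Writing $a=\sum_{i,j}c_{i,j}P_{i,j}$, the assignment $n=i-j$, $k=\min(i,j)$ gives a bijection between pairs $(i,j)\in\Z_{\ge 0}^2$ and pairs $(n,k)\in\Z\times\Z_{\ge 0}$, so that $a_n(k)=c_{k+n,k}$ when $n\ge 0$ and $a_n(k)=c_{k,k-n}$ when $n<0$. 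Since polynomials in $(i,j)$ are dominated by polynomials in $(|n|,k)$ and conversely, the RD property for $\{c_{i,j}\}$ is equivalent to the RD property for $\{a_n(k)\}$ in $n$ and $k$ jointly.

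Next, assuming $\{a_n(k)\}$ is RD, I would verify that the partial sums $a^{(L)}=\sum_{0\le n\le L}U^n a_n(\K)+\sum_{-L\le n<0}a_n(\K)(U^*)^{-n}$ form a Cauchy sequence in $\mathcal{K}^\infty$. Each seminorm $\|\delta_\K^j(a^{(L')}-a^{(L)})(I+\K)^N\|_{\textrm{HS}}^2$ evaluates, by the displayed formula immediately preceding the proposition, to a tail of the form $\sum_{|n|>L}\sum_k n^{2j}(1+k+|n|)^{2N}|a_n(k)|^2$ (with the natural weights in each regime), which tends to zero by RD. Completeness of $\mathcal{K}^\infty$ (Proposition \ref{MN-norm_basics}(7)) then yields a limit operator whose seminorms are all finite by the same formula.

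For the converse, given $a\in\mathcal{K}^\infty$ I would define $a_n(k)$ from the matrix coefficients $c_{k,l}$ via the recipe above. The equivalence of RD from the first paragraph guarantees that $\{a_n(k)\}$ is RD, so by the previous step the series \eqref{series_rep} converges in $\mathcal{K}^\infty$ to some operator $\wt a$; since $\wt a$ has the same matrix coefficients as $a$, they coincide. Uniqueness is immediate from the bijection, as two distinct coefficient sequences would disagree on some matrix entry. The only step requiring mild care is verifying Cauchyness of the partial sums simultaneously in all seminorms, but this is handled directly by the explicit HS-norm formula, so no further analytic input is needed.
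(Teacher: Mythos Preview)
Your proposal is correct and follows essentially the same approach as the paper: both arguments hinge on the explicit bijection $(i,j)\leftrightarrow(n,k)$ between matrix coefficients and Fourier coefficients together with the Hilbert--Schmidt seminorm formulas displayed just before the proposition. You are somewhat more explicit about the Cauchy/convergence step than the paper, but the ideas and ingredients are identical.
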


\begin{proof}
The first part follows from the above formulas for $\|\delta_\K^j(a)(I+\K)^N\|_{\textrm{HS}}$.  
The second part follows from the following consideration. Let $a\in\mathcal{K}^\infty$ and consider the two convergent decompositions:

\begin{equation*}
a =\sum_{k,l\ge0} \tilde{a}_{kl}P_{k,l}\quad\textrm{and}\quad a =\sum_{n\ge0}U^na_n(\K) + \sum_{n<0}a_n(\K)(U^*)^{-n}\,.
\end{equation*}
We compute the relation between $\tilde{a}_{kl}$ and $a_n(k)$.  We have
\begin{equation*}
\begin{aligned}
a&= \sum_{n\ge0}U^na_n(\K) + \sum_{n<0}a_n(\K)(U^*)^{-n} = \sum_{n,k\ge0}U^na_n(k)P_{kk} + \sum_{n<0,k\ge0}a_n(k)P_{kk}(U^*)^{-n}\\
&=\sum_{n,k\ge0}a_n(k)P_{k+n,k} + \sum_{n<0,k\ge0}a_n(k)P_{k,k-n} = \sum_{k\ge l\ge0}a_{k-l}(l)P_{k,l} + \sum_{l>k\ge0}a_{k-l}(k)P_{k,l}\,,
\end{aligned}
\end{equation*}
by resummation.  Thus the relation between the coefficients is:
\begin{equation*}
\tilde{a}_{kl} = \left\{
\begin{aligned}
&a_{k-l}(l) &&\textrm{if }k\ge l\\
&a_{k-l}(k) &&\textrm{if }k<l\,.
\end{aligned}\right.
\end{equation*}
It follows that RD of $a_k(l)$ is equivalent to RD of $\tilde{a}_{kl}$.
\end{proof}

\subsection{Basis Independence}
As before, let $H$ be a separable Hilbert space and let  $\mathcal{E}=\{E_k\}$ be an orthonormal basis of $H$. In this subsection only we write $\mathcal{K}^\infty_\mathcal{E}$ for the space of smooth compact operators with respect to the basis $\mathcal{E}$. We will also use a subscript notation for the corresponding label operators:
\begin{equation*}
\K_\mathcal{E}E_k=kE_k.
\end{equation*}
We have the following version of Proposition \ref{MN-norm_basics}, statement (1):
$a\in\mathcal{K}^\infty_\mathcal{E}$ if and only if 
$$\|(I+\K_\mathcal{E})^Na(I+\K_\mathcal{E})^M\|<\infty$$ 
for all nonnegative integers $M$ and $N$.

Let $S_\mathcal{E}\subset H$ be the dense subspace of $H$ of linear combinations of the basis elements of $\mathcal{E}$ with rapid decay coefficients:
\begin{equation}\label{S_def}
S_\mathcal{E}=\left\{x\in H: x=\sum_{k=0}^\infty x_kE_k, \{x_k\}_{k=0}^\infty \textrm{ is RD}\right\}.
\end{equation}
Then $S_\mathcal{E}$ is a Fr\'{e}chet space with respect to the norms:
$$||x||^2_N:=||(I+\K)^Nx||^2=\sum_{k=0}^\infty (1+k)^{2N}|x_k|^2.$$

Let $\mathcal{F}=\{F_k\}$, $F_k\in S_\mathcal{E}$ be another orthonormal basis of $H$ consisting of elements of $S_\mathcal{E}$. Let $\mathcal{U}$ be the corresponding unitary operator:
\begin{equation*}
\mathcal{U}E_k=F_k.
\end{equation*}
It is easy to see that we have the following relation between label operators:
\begin{equation*}
\K_\mathcal{E}=\mathcal{U}^{-1}\K_\mathcal{F}\mathcal{U}.
\end{equation*}

\begin{prop}
With the above notation, if $\mathcal{U}: S_\mathcal{E}\to S_\mathcal{E}$ is a continuous map of Fr\'{e}chet spaces then 
\begin{equation*}
\mathcal{K}^\infty_\mathcal{F}\subseteq\mathcal{K}^\infty_\mathcal{E}.
\end{equation*}
\end{prop}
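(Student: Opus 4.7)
The plan is to translate the Fr\'echet-space continuity of $\mathcal{U}$ into an operator-norm estimate mixing the two label operators $\K_\mathcal{E}$ and $\K_\mathcal{F}$, and then apply the characterization of $\mathcal{K}^\infty$ via the norms $\|(I+\K)^Na(I+\K)^M\|$ recalled just before the proposition.

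The first step is to unpack the hypothesis. The seminorms defining the topology of $S_\mathcal{E}$ are $\|x\|_N=\|(I+\K_\mathcal{E})^N x\|$, so continuity of $\mathcal{U}:S_\mathcal{E}\to S_\mathcal{E}$ is the assertion that for each $N\in\Z_{\ge0}$ there exist $K=K(N)\in\Z_{\ge0}$ and $C_N>0$ with $\|(I+\K_\mathcal{E})^N\mathcal{U} x\|\le C_N\|(I+\K_\mathcal{E})^K x\|$ for all $x\in S_\mathcal{E}$. Substituting $x=(I+\K_\mathcal{E})^{-K}z$, using the intertwining $\mathcal{U}(I+\K_\mathcal{E})^{-K}=(I+\K_\mathcal{F})^{-K}\mathcal{U}$ (which follows from $\K_\mathcal{F}\mathcal{U}=\mathcal{U}\K_\mathcal{E}$), and exploiting the unitarity of $\mathcal{U}$ together with the density of $\mathcal{U}(S_\mathcal{E})$ in $H$, I would extract the bounded-operator estimate
\begin{equation*}
\bigl\|(I+\K_\mathcal{E})^N(I+\K_\mathcal{F})^{-K}\bigr\|\le C_N,
\end{equation*}
valid for each $N$ with $K=K(N)$. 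Taking adjoints gives the same bound with the two factors interchanged.

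For the second step, fix $a\in\mathcal{K}^\infty_\mathcal{F}$ and $M,N\ge 0$, and pick $K_1=K(N)$, $K_2=K(M)$. Inserting $(I+\K_\mathcal{F})^{-K_i}(I+\K_\mathcal{F})^{K_i}=I$ on each side yields the telescoping factorization
\begin{equation*}
(I+\K_\mathcal{E})^N a(I+\K_\mathcal{E})^M = \bigl[(I+\K_\mathcal{E})^N(I+\K_\mathcal{F})^{-K_1}\bigr]\bigl[(I+\K_\mathcal{F})^{K_1}a(I+\K_\mathcal{F})^{K_2}\bigr]\bigl[(I+\K_\mathcal{F})^{-K_2}(I+\K_\mathcal{E})^M\bigr].
\end{equation*}
The outer two brackets are bounded by the first step (and its adjoint), while the middle bracket is bounded because $a\in\mathcal{K}^\infty_\mathcal{F}$. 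Submultiplicativity of the operator norm then gives $\|(I+\K_\mathcal{E})^Na(I+\K_\mathcal{E})^M\|<\infty$ for all $N,M$, which by the $\mathcal{E}$-version of Proposition \ref{MN-norm_basics}(1) stated just above the current proposition means exactly that $a\in\mathcal{K}^\infty_\mathcal{E}$.

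The main technical obstacle is the first step: a priori the unbounded operator $(I+\K_\mathcal{E})^N$ composed with the bounded operator $(I+\K_\mathcal{F})^{-K}$ has no obvious reason to be bounded. The continuity hypothesis combined with the intertwining by the unitary $\mathcal{U}$ is precisely what makes that product bounded on all of $H$; once this is in hand, the rest of the argument is a routine telescoping.
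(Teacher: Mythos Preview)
Your proposal is correct and follows essentially the same approach as the paper: both arguments translate the Fr\'echet continuity of $\mathcal{U}$ into boundedness of an operator of the form $(I+\K_\mathcal{E})^N\,\cdot\,(I+\K_\mathcal{F})^{-K}$ (up to a harmless unitary factor $\mathcal{U}$), and then telescope $(I+\K_\mathcal{E})^Na(I+\K_\mathcal{E})^M$ into three bounded pieces. The only cosmetic difference is that the paper first rewrites $\|(I+\K_\mathcal{E})^Na(I+\K_\mathcal{E})^M\|$ as $\|(I+\K_\mathcal{F})^N\mathcal{U}a\mathcal{U}^{-1}(I+\K_\mathcal{F})^M\|$ via the intertwining and factors there, whereas you apply the intertwining earlier to get the mixed bounded operator directly; your presentation makes the role of the relation $\mathcal{U}\K_\mathcal{E}=\K_\mathcal{F}\mathcal{U}$ more explicit.
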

\begin{proof}
Continuity of $\mathcal{U}: S_\mathcal{E}\to S_\mathcal{E}$ means that for every $N$ there is $N'$ such that:
\begin{equation*}
||(I+\K)^N\mathcal{U}x||\leq\textrm{const}||(I+\K)^{N'}x||,
\end{equation*}
which is the same as saying that the operator
\begin{equation}\label{S_cont}
(I+\K)^N\mathcal{U}(I+\K)^{-N'}
\end{equation}
is bounded.

Suppose that $a$ is in $\mathcal{K}^\infty_\mathcal{F}$. To show that $a\in\mathcal{K}^\infty_\mathcal{E}$ we want to estimate norms:
\begin{equation*}
\|(I+\K_\mathcal{E})^Na(I+\K_\mathcal{E})^M\|=\|(I+\K_\mathcal{F})^N\mathcal{U}a\,\mathcal{U}^{-1}(I+\K_\mathcal{F})^M\|.
\end{equation*}
Picking appropriate $M'$ and $N'$ so that the operators \eqref{S_cont} are bounded, we see that the above norm expression is equal to:
\begin{equation*}
\left\|\left((I+\K_\mathcal{F})^N\mathcal{U}(I+\K_\mathcal{F})^{-N'}\right)(I+\K_\mathcal{F})^{N'}a\,(I+\K_\mathcal{F})^{M'}\left((I+\K_\mathcal{F})^M\mathcal{U}(I+\K_\mathcal{F})^{-M'}\right)^*\right\|.
\end{equation*}
By the assumptions, the expression is bounded, finishing the proof.
\end{proof}

A completely analogous proof gives the following related result that we will use later.
\begin{prop}\label{Comp_Inv}
With the above notation, if $\mathcal{U}: S_\mathcal{E}\to S_\mathcal{E}$ is a continuous map of Fr\'{e}chet spaces then 
\begin{equation*}
\mathcal{U}\mathcal{K}^\infty_\mathcal{E}\mathcal{U}^{-1}\subseteq\mathcal{K}^\infty_\mathcal{E}.
\end{equation*}
\end{prop}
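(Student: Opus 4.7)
The plan is to run essentially the same three-piece splitting argument as in the preceding proposition, but without any basis change. Starting from $a\in\mathcal{K}^\infty_\mathcal{E}$, I would invoke the characterization stated earlier in the subsection, namely that $b\in\mathcal{K}^\infty_\mathcal{E}$ iff $\|(I+\K_\mathcal{E})^N b (I+\K_\mathcal{E})^M\|<\infty$ for every pair of nonnegative integers $M,N$, and verify this for $b=\mathcal{U}a\mathcal{U}^{-1}$.

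For each such pair $M,N$, the continuity of $\mathcal{U}:S_\mathcal{E}\to S_\mathcal{E}$ supplies exponents $M',N'$ for which both operators of the form \eqref{S_cont}, namely
\begin{equation*}
(I+\K_\mathcal{E})^N\mathcal{U}(I+\K_\mathcal{E})^{-N'}\quad\text{and}\quad (I+\K_\mathcal{E})^M\mathcal{U}(I+\K_\mathcal{E})^{-M'},
\end{equation*}
are bounded on $H$. I would then insert compatible factors of $(I+\K_\mathcal{E})^{\pm N'}$ between $\mathcal{U}$ and $a$, and $(I+\K_\mathcal{E})^{\pm M'}$ between $a$ and $\mathcal{U}^{-1}$, to rewrite $(I+\K_\mathcal{E})^N\mathcal{U}a\mathcal{U}^{-1}(I+\K_\mathcal{E})^M$ as a product of three pieces. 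The first piece is bounded by the continuity of $\mathcal{U}$, the middle piece $(I+\K_\mathcal{E})^{N'}a(I+\K_\mathcal{E})^{M'}$ is bounded because $a\in\mathcal{K}^\infty_\mathcal{E}$, and the third piece involves $\mathcal{U}^{-1}$.

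The main obstacle, precisely as in the preceding proposition, is that we are not given continuity of $\mathcal{U}^{-1}$ on $S_\mathcal{E}$; only continuity of $\mathcal{U}$. The resolution is to exploit the unitarity $\mathcal{U}^{-1}=\mathcal{U}^*$ on $H$ to identify the third factor $(I+\K_\mathcal{E})^{-M'}\mathcal{U}^{-1}(I+\K_\mathcal{E})^M$ as the Hilbert space adjoint of $(I+\K_\mathcal{E})^M\mathcal{U}(I+\K_\mathcal{E})^{-M'}$, which is bounded by the same continuity hypothesis. This single duality trick is the only delicate point; everything else is bookkeeping, and the product of three bounded operators is bounded, yielding $\mathcal{U}a\mathcal{U}^{-1}\in\mathcal{K}^\infty_\mathcal{E}$.
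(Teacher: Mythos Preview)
Your proposal is correct and matches the paper's proof essentially line for line: the paper writes $(I+\K_\mathcal{E})^N\mathcal{U}a\,\mathcal{U}^{-1}(I+\K_\mathcal{E})^M$ as the same three-factor product and handles the $\mathcal{U}^{-1}$ piece by taking the adjoint $\left((I+\K_\mathcal{E})^M\mathcal{U}(I+\K_\mathcal{E})^{-M'}\right)^*$, exactly the unitarity trick you identify as the only delicate point.
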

\begin{proof}
If $a$ is in $\mathcal{K}^\infty_\mathcal{E}$, we want to show that $\mathcal{U}a\,\mathcal{U}^{-1}\in\mathcal{K}^\infty_\mathcal{E}$ by estimating norms:
\begin{equation*}
\|(I+\K_\mathcal{E})^N\mathcal{U}a\,\mathcal{U}^{-1}(I+\K_\mathcal{E})^M\|.
\end{equation*}
Proceeding as above we write this as:
\begin{equation*}
\left\|\left((I+\K_\mathcal{E})^N\mathcal{U}(I+\K_\mathcal{E})^{-N'}\right)(I+\K_\mathcal{E})^{N'}a\,(I+\K_\mathcal{E})^{M'}\left((I+\K_\mathcal{E})^M\mathcal{U}(I+\K_\mathcal{E})^{-M'}\right)^*\right\|,
\end{equation*}
which, for appropriate $M'$ and $N'$, is finite.
\end{proof}

\subsection{Smooth Integral Kernels}
As a concrete example, we consider the Hilbert space $L^2(\R/\Z)$ with its standard basis $\{e^{2\pi ikx}\}_{k\in\Z}$. It turns out that the smooth compact operators $\mathcal{K}^\infty$ in $L^2(\R/\Z)$ with respect to the above basis are precisely the integral operators with smooth kernel.
\begin{prop}
With the above notation, $a\in \mathcal{K}^\infty$ if and only if 
\begin{equation*}
af(x)=\int_0^1a(x,y)f(y)\,dy,
\end{equation*}
where the integral kernel $a(x,y)$ is smooth.
\end{prop}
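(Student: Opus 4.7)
The plan is to pass through the dictionary between matrix coefficients of $a$ in the Fourier basis and Fourier coefficients of a candidate integral kernel on the torus $\R/\Z\times\R/\Z$, and then invoke the classical characterization of $C^\infty(\R/\Z\times\R/\Z)$ by rapid decay of Fourier coefficients.

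First, I would record, as a variant of Proposition \ref{MN-norm_basics}(1) adapted to the $\Z$-indexed basis, that $a\in\mathcal{K}^\infty$ exactly when the matrix coefficients
$$a_{k,l}=\langle a\,e^{2\pi ily},e^{2\pi ikx}\rangle,\quad k,l\in\Z,$$
satisfy $\sup_{k,l\in\Z}(1+|k|)^N(1+|l|)^M|a_{k,l}|<\infty$ for every $M,N\ge0$. This requires a small bookkeeping step to reconcile the $\Z_{\ge0}$-indexed framework of Section 2 with the $\Z$-indexed Fourier basis: reindexing as $E_0=1$, $E_{2k-1}=e^{2\pi ikx}$, $E_{2k}=e^{-2\pi ikx}$ for $k\ge1$ makes the label operator $\K$ comparable to $|k|$, so the abstract RD condition rephrases as joint decay in $|k|$ and $|l|$.

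For the ``if'' direction, given a smooth kernel $a(x,y)\in C^\infty(\R/\Z\times\R/\Z)$, repeated integration by parts in
$$a_{k,l}=\int_0^1\!\!\int_0^1 a(x,y)\,e^{-2\pi ikx}e^{2\pi ily}\,dx\,dy$$
yields the required rapid decay in $(k,l)$, so $a\in\mathcal{K}^\infty$. For the ``only if'' direction, starting from $a\in\mathcal{K}^\infty$, I would define the candidate kernel by
$$a(x,y)=\sum_{k,l\in\Z}a_{k,l}\,e^{2\pi ikx}e^{-2\pi ily},$$
and observe that rapid decay of $\{a_{k,l}\}$ in both indices makes this series, together with all of its termwise partial derivatives in $x$ and $y$, converge absolutely and uniformly on the torus, so $a(x,y)\in C^\infty$. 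Comparing matrix coefficients on trigonometric polynomials (a dense subspace) then identifies $a$ with the integral operator having this kernel.

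The only real obstacle is the bookkeeping involved in aligning the $\Z$-indexed Fourier basis with the $\Z_{\ge0}$-indexed setup of Section 2, so that the abstract RD condition on matrix coefficients matches the familiar two-variable RD condition in classical Fourier analysis on $\R/\Z\times\R/\Z$. Once that dictionary is in place, the rest is standard Fourier analysis on the torus.
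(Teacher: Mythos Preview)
Your proposal is correct and follows essentially the same approach as the paper: both identify the matrix coefficients $a_{k,l}$ of the operator with (a sign-flipped reindexing of) the Fourier coefficients of the candidate kernel on the torus, and then invoke the classical equivalence between smoothness and rapid decay of Fourier coefficients. The paper compresses this into the single relation $a_{k,l}=\hat a_{k,-l}$ and a one-line appeal to the RD--smoothness dictionary, whereas you spell out both directions and flag the $\Z$ versus $\Z_{\ge0}$ reindexing explicitly; the paper simply works directly with the $\Z$-indexed basis without comment.
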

\begin{proof}
The result follows from a simple calculation showing that the matrix coefficients $a_{k,l}$ of $a$ with respect to the basis $\{e^{2\pi ikx}\}_{k\in\Z}$ and the Fourier series coefficients $\hat a_{k,l}$ of the integral kernel $a(x,y)$ are related by:
\begin{equation*}
a_{k,l}=\hat a_{k,-l}.
\end{equation*}
Consequently, RD of $a_{k,l}$ is equivalent to RD of $\hat a_{k,l}$, the later meaning that $a(x,y)$ is smooth. 
\end{proof}

\subsection{Stability for Smooth Compact Operators}
The last topics we consider in this section is the stability of $\mathcal{K}^\infty$ under the holomorphic calculus and under smooth calculus of self-adjoint elements.

\begin{prop}\label{invert_K_infty}
Suppose that $c\in\mathcal{K}^\infty$ and $I+c$ is invertible in $H$.  Then $(I+c)^{-1}-I$ is in $\mathcal{K}^\infty$.
\end{prop}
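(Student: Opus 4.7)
The strategy is to exploit the algebraic identities coming from $(I+c)A=A(I+c)=I$ together with the characterization of $\mathcal{K}^\infty$ by the operator norms $\|(I+\K)^N\cdot(I+\K)^M\|$. Let $A=(I+c)^{-1}$, bounded by the invertibility hypothesis, and set $b=A-I$. Then
\[
b=-cA=-Ac,\qquad A=I-Ac=I-cA.
\]
By the equivalence noted in the paper just before Proposition~\ref{equiv_norms} between the norms $\|\cdot\|_{M,N}$ and the operator norms $\|(I+\K)^N\cdot(I+\K)^M\|$, combined with Proposition~\ref{MN-norm_basics}(1), it suffices to show that $(I+\K)^N\, b\,(I+\K)^M$ extends to a bounded operator on $H$ for every pair $M,N\ge 0$.

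The main computation is to substitute $b=-cA$ and then to use $A=I-Ac$, which on the natural domain of $(I+\K)^M$ yields
\[
(I+\K)^N\,b\,(I+\K)^M \;=\; -(I+\K)^N c(I+\K)^M \;+\; (I+\K)^N c\cdot A\cdot c(I+\K)^M.
\]
The first summand is bounded because $c\in\mathcal{K}^\infty$, using the equivalence above. The second summand is the composition of three operators that each extend to a bounded operator on $H$: the factor $c(I+\K)^M$ extends with norm $\|c\|_{0,M}<\infty$; the operator $A$ is bounded; and $(I+\K)^N c$ is bounded because its Hilbert--Schmidt norm squared equals $\sum_{k,l}(1+k)^{2N}|c_{k,l}|^2$, which is finite by the rapid decay of $c$ in the row index (equivalently, by applying Proposition~\ref{MN-norm_basics}(6) to $c^*$). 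Composing bounded extensions then gives a bounded operator on all of $H$, and combined with the first summand we obtain $\|(I+\K)^N\, b\,(I+\K)^M\|<\infty$.

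The only real subtlety is handling the unbounded factor $(I+\K)^M$: a priori the intermediate product $(I+\K)^N cA(I+\K)^M$ involves $A(I+\K)^M$, and $A$ does not manifestly preserve $\mathrm{Dom}((I+\K)^M)$. The identity $A=I-Ac$ is what rescues the argument: it converts $A(I+\K)^M$ into $(I+\K)^M-Ac(I+\K)^M$, where the factor $c(I+\K)^M$ already extends to a bounded operator on all of $H$, and hence every piece of the computation can be interpreted as a genuine bounded operator. Once the required bounds on $(I+\K)^N\, b\,(I+\K)^M$ are in place, the conclusion $b\in\mathcal{K}^\infty$ follows.
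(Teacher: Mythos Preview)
Your proof is correct and takes a genuinely different route from the paper. The paper works with the seminorms $\|\delta_\K^j(\cdot)\|_{0,N}$: it first bounds $\|(I+c)^{-1}c\|_{0,N}$ using $\|ab\|_{0,N}\le\|a\|\,\|b\|_{0,N}$, and then argues inductively that $\delta_\K^j((I+c)^{-1}c)$ is a finite sum of terms of the form (bounded operator)$\times$(element of $\mathcal{K}^\infty$), from which finiteness of all $\|\cdot\|_{M,N}$ norms follows. You instead use the equivalent two-sided seminorms $\|(I+\K)^N\cdot(I+\K)^M\|$ and the single algebraic identity $A=I-Ac$ to produce the closed decomposition
\[
(I+\K)^N b(I+\K)^M \;=\; -(I+\K)^N c(I+\K)^M \;+\; \bigl[(I+\K)^N c\bigr]\,A\,\bigl[c(I+\K)^M\bigr],
\]
which avoids any induction. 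Your approach is shorter and more self-contained for this proposition; the paper's $\delta_\K$-based technique, on the other hand, is the template reused verbatim later (e.g.\ in the stability proof for $\mathcal{T}^\infty$), so it earns its keep by being portable to settings where one does not have such a clean two-sided factorization.
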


\begin{proof}
Notice that
\begin{equation*}
I-(I-c)^{-1} = (I+c)^{-1}c\,.
\end{equation*}
Therefore, it follows from Proposition \ref{N-norm_basics} that
\begin{equation*}
\|I-(I+c)^{-1}\|_{0,N} = \|(I+c)^{-1}c\|_{0,N} \le \|(I+c)^{-1}\|\|c\|_{0,N}<\infty\,.
\end{equation*}
Next, consider the following calculation:
\begin{equation*}
\begin{aligned}
\delta_\K((I+c)^{-1}c) &= \delta_\K((I+c)^{-1})c + (I+c)^{-1}\delta_\K(c) \\
&= -(I+c)^{-1}\delta_\K(c)(I+c)^{-1}c + (I+c)^{-1}\delta_\K(c)\,.
\end{aligned}
\end{equation*}
Notice that both terms in the above equation are of the form: bounded operator times an element from $\mathcal{K}^\infty$.  In fact, by induction we have for any $j$
\begin{equation*}
\delta_\K^j((I+c)^{-1}c) = \sum_i a_ib_i\,,
\end{equation*}
where $a_i$ are bounded operators and $b_i\in\mathcal{K}^\infty$.  It therefore follows that
\begin{equation*}
\|\delta_\K^j((I+c)^{-1}c)\|_{0,N}<\infty
\end{equation*}
for every $N$.  Consequently, for every $M$ and $N$ we get
\begin{equation*}
\|I-(I+c)^{-1}\|_{M,N}<\infty\,,
\end{equation*}
which precisely means that $I-(I+c)^{-1}\in\mathcal{K}^\infty$.
\end{proof}

\begin{theo}
$\mathcal{K}^\infty$ is closed under the holomorphic functional calculus: for any $c\in \mathcal{K}^\infty$ and a function $f$ that is holomorphic on an open domain containing the spectrum of $c$ and such that $f(0)=0$, we have $f(c)\in \mathcal{K}^\infty$.
\end{theo}

\begin{proof}
The usual arguments \cite{Bo} work here as in \cite{KMP2}. Let $c\in \mathcal{K}^\infty$ and let $f$ be a holomorphic function on some open set containing $\sigma(c)$ and $C$ a contour around the spectrum. We define $f(c)$ as
\begin{equation*}
f(c) = \frac{1}{2\pi i}\int_C f(\zeta)(\zeta- c)^{-1}\,d\zeta\,.
\end{equation*}
The convergence of the integral is guaranteed by the completeness of $\mathcal{K}^\infty$, while the condition $f(0)=0$ assures that the outcome is compact
\end{proof}

A key to establish stability under smooth functional calculus is to control exponentials of smooth compact operators.
\begin{prop}
If $c\in\mathcal{K}^\infty$, then $e^c-I$ is in $\mathcal{K}^\infty$.
\end{prop}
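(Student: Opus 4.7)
The plan is to prove this by showing that the formal power series $e^c - I = \sum_{n=1}^\infty c^n/n!$ converges in the Fr\'{e}chet topology of $\mathcal{K}^\infty$. The key ingredient is the submultiplicativity of the norms $\|\cdot\|_{M,N}$, established as statement (4) of Proposition \ref{MN-norm_basics}, which gives $\|c^n\|_{M,N} \le \|c\|_{M,N}^n$ for every $n \ge 1$.

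First, I would fix arbitrary nonnegative integers $M$ and $N$ and consider the partial sums $s_K := \sum_{n=1}^K c^n/n!$. Using submultiplicativity, for $K_2 > K_1$ we estimate
\begin{equation*}
\|s_{K_2} - s_{K_1}\|_{M,N} \le \sum_{n=K_1+1}^{K_2} \frac{\|c\|_{M,N}^n}{n!}.
\end{equation*}
Since $\|c\|_{M,N} < \infty$ by statement (1) of Proposition \ref{MN-norm_basics}, the tails of the scalar exponential series $\sum_{n=1}^\infty \|c\|_{M,N}^n/n!$ tend to zero, so $\{s_K\}$ is Cauchy with respect to $\|\cdot\|_{M,N}$. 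Since this holds for every $M$ and $N$, the sequence is Cauchy in the Fr\'{e}chet topology on $\mathcal{K}^\infty$, and by the completeness statement (7) of Proposition \ref{MN-norm_basics}, it converges to some element $d \in \mathcal{K}^\infty$.

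It then remains only to identify $d$ with $e^c - I$. Since $\|\cdot\|_{0,0}$ is the operator norm and $\|a\|_{0,0} \le \|a\|_{M,N}$, convergence in any $\|\cdot\|_{M,N}$ implies convergence in operator norm. Thus $s_K \to d$ in $B(H)$. On the other hand, the standard scalar bound $\|s_K - (e^c - I)\| \le \sum_{n > K} \|c\|^n/n!$ shows $s_K \to e^c - I$ in $B(H)$ as well, so $d = e^c - I$, completing the proof.

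I do not expect a serious obstacle here: the argument is essentially that $\mathcal{K}^\infty$ is a submultiplicative Fr\'{e}chet algebra, and every such algebra admits an entire holomorphic functional calculus. The only subtlety worth being careful about is to invoke submultiplicativity on each seminorm separately (rather than trying to work with a single norm), which is exactly what Proposition \ref{MN-norm_basics}(4) permits.
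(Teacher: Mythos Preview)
Your proposal is correct and follows essentially the same approach as the paper: the paper's proof is a one-line sketch invoking the power series expansion $e^c-I=\sum_{n\geq 1}c^n/n!$ together with submultiplicativity of the $\|\cdot\|_{M,N}$ norms, and you have simply spelled out the standard Cauchy-plus-completeness argument behind that sketch. No differences of substance.
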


\begin{proof} This follows from the expansion into series of smooth compact operators
\begin{equation*}
e^c-I=\sum_{n=1}^\infty\frac{c^n}{n!}
\end{equation*}
and submultiplicativity of $\|\cdot\|_{M,N}$ norms.
\end{proof}
We need a more detailed control of the norms of exponential of self-adjoint compact operators.
\begin{prop}
Suppose that $c\in\mathcal{K}^\infty$ is a self-adjoint smooth compact operator. Then we have estimates
\begin{equation*}
\|e^{ic}-I\|_{0,N}\leq \|c\|_{0,N}\,,
\end{equation*}
and, for $j\geq1$,
\begin{equation*}
\|\partial_j(e^{ic}-I)\|_{0,N}\leq \|\partial_j(c)\|_{0,N}+  \|\partial_j(c)\| \|c\|_{0,N}\,.
\end{equation*}
\end{prop}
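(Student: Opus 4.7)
The plan is to prove both inequalities by writing the operators of interest as integrals and then exploiting the unitarity of $e^{itc}$ for $t\in\R$, which follows from $c=c^*$.

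For the first estimate, I would start from the identity
\begin{equation*}
e^{ic}-I = i\int_0^1 e^{itc}\,c\,dt,
\end{equation*}
valid by the fundamental theorem of calculus applied to $e^{itc}$ and the fact that $c$ commutes with its own functional calculus. Multiplying on the right by $(I+\K)^N$ and using that $\|e^{itc}\|=1$ gives immediately
\begin{equation*}
\|(e^{ic}-I)(I+\K)^N\|\leq \int_0^1 \|c(I+\K)^N\|\,dt = \|c\|_{0,N}.
\end{equation*}

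For the second estimate, I would derive a Duhamel-type formula for $\partial_j(e^{ic})$. Setting $F(t):=\partial_j(e^{itc})$ and using that $\partial_j$ is a derivation together with $\tfrac{d}{dt}e^{itc}=ice^{itc}$, one obtains the operator ODE
\begin{equation*}
F'(t)=icF(t)+i\partial_j(c)e^{itc},\qquad F(0)=0,
\end{equation*}
whose solution at $t=1$ reads
\begin{equation*}
\partial_j(e^{ic}) = i\int_0^1 e^{i(1-s)c}\,\partial_j(c)\,e^{isc}\,ds.
\end{equation*}
Then I would multiply by $(I+\K)^N$ on the right and split $e^{isc}(I+\K)^N = (e^{isc}-I)(I+\K)^N + (I+\K)^N$, yielding two summands inside the integral. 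For the $(I+\K)^N$ summand, unitarity of $e^{i(1-s)c}$ bounds the integrand by $\|\partial_j(c)(I+\K)^N\|=\|\partial_j(c)\|_{0,N}$. For the other summand, I would pull $\|\partial_j(c)\|$ out of the middle and apply the first estimate to the factor $(e^{isc}-I)(I+\K)^N$, obtaining a bound $s\,\|\partial_j(c)\|\,\|c\|_{0,N}$. Integrating in $s\in[0,1]$ then produces the claimed inequality (in fact with an even sharper constant $\tfrac{1}{2}$ on the second term).

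The main technical obstacle is justifying the Duhamel identity rigorously, since $(I+\K)^j$ is unbounded and so $\partial_j(e^{itc})$ must be handled with some care. However, by the preceding proposition $e^{itc}-I\in\mathcal{K}^\infty$, hence $\partial_j(e^{itc})=\partial_j(e^{itc}-I)$ is a genuine element of $\mathcal{K}^\infty$ and depends continuously on $t$ in this Fr\'{e}chet space, so the ODE can be interpreted inside $\mathcal{K}^\infty$ and solved by the usual variation-of-parameters argument. Alternatively, one can verify all the identities vectorwise on the dense subspace $S_\mathcal{E}$ of rapidly decaying sequences and then read off the operator-norm bound at the end.
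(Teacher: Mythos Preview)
Your proposal is correct and follows essentially the same route as the paper: both use the integral $e^{ic}-I=i\int_0^1 e^{itc}c\,dt$ for the first estimate, and both arrive at the Duhamel formula $\partial_j(e^{ic})=i\int_0^1 e^{i(1-s)c}\partial_j(c)e^{isc}\,ds$ and split off the term $e^{isc}-I$ for the second. The only cosmetic difference is that the paper obtains the Duhamel identity by directly differentiating $t\mapsto e^{i(1-t)c}(I+\K)^j e^{itc}$, whereas you recover it via the ODE for $F(t)=\partial_j(e^{itc})$; your observation about the sharper constant $\tfrac12$ is correct but not exploited in the paper.
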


\begin{proof} 
First we estimate the $0,N$-norm of $e^{ic}-I$. To do this we write:
\begin{equation*}
e^{ic}-I= \int_0^1\frac{d}{dt}\left(e^{itc}\right)\,dt=i \int_0^1e^{itc}c\,dt\,.
\end{equation*}
It follows from Proposition \ref{N-norm_basics} that we have:
\begin{equation}\label{eic_estimate}
\|e^{ic}-I\|_{0,N}\leq \int_0^1\|e^{itc}\|\|c\|_{0,N}\,dt =\|c\|_{0,N}\,.
\end{equation}

For $j\geq 1$ we write:
\begin{equation}\label{duhamel}
\begin{aligned}
\partial_j(e^{ic})&=(I+\K)^je^{ic}-e^{ic}(I+\K)^j=\int_0^1\frac{d}{dt}\left(e^{i(1-t)c}(I+\K)^je^{itc}\right)\,dt=\\
&=
i\int_0^1e^{i(1-t)c}\partial_j(c)e^{itc}\,dt=i\int_0^1e^{i(1-t)c}\partial_j(c)\,dt+i\int_0^1e^{i(1-t)c}\partial_j(c)(e^{itc}-I)\,dt\,.
\end{aligned}
\end{equation}
Consequently, we estimate, using \eqref{eic_estimate}:
\begin{equation*}
\begin{aligned}
\|\partial_j(e^{ic}-I)\|_{0,N}&\leq \int_0^1\|e^{i(1-t)c}\partial_j(c)\|_{0,N}\,dt+\int_0^1\|e^{i(1-t)c}\partial_j(c)(e^{itc}-I)\|_{0,N}\,dt\leq\\
&\leq \|\partial_j(c)\|_{0,N}+  \|\partial_j(c)\| \|c\|_{0,N}\,,
\end{aligned}
\end{equation*}
by Proposition \ref{N-norm_basics}.
\end{proof}

\begin{theo}
$\mathcal{K}^\infty$ is closed under the holomorphic functional calculus of self-adjoint elements: for any self-adjoint smooth compact operator $c\in\mathcal{K}^\infty$  and any smooth function  $f(x)$ on a neighborhood of the spectrum of $c$ such that $f(0)=0$, we have $f(c)$ is in $\mathcal{K}^\infty$.
\end{theo}

\begin{proof} The proof below is inspired by \cite{BC}. Pick a number $L$ bigger than $2\|c\|$ and extend $f(x)$ to the whole of $\R$ in such a way that it is still smooth and additionally it is $L$-periodic: $f(x+L)=f(x)$ for every $x$. Then $f(x)$ admits a Fourier series representation:
\begin{equation*}
f(x)=\sum_{n\in\Z}f_ne^{2\pi inx/L}
\end{equation*}
with rapid decay coefficients $\{f_n\}$. Taking into account the assumption $f(0)=0$, we have:
\begin{equation*}
f(c)=\sum_{n\in\Z}f_n\left(e^{2\pi inc/L}-I\right).
\end{equation*}
Using the previous proposition we see that the seminorms
\begin{equation}
\|\partial_j(e^{2\pi inc}-I)\|_{0,N}
\end{equation}
grow at most quadratically in $n$, so that $\|\partial_j(f(c))\|_{M,N}<\infty$, by RD of $\{f_n\}$, implying that $f(c)$ is in $\mathcal{K}^\infty$.
\end{proof}

\section{Smooth Toeplitz algebra}
\subsection{Fourier Series for Toeplitz Algebra}
As before, $H$ is a separable Hilbert space with orthonormal basis $\{E_k\}_{k\ge0}$ and $U$ is the shift operator.  The Toepltiz algebra $\mathcal{T}$ is the C$^*$-algebra generated by $U$ and $U^*$, $\mathcal{T} = C^*(U)$.  It was proved in \cite{KMRSW1} that $a$ is a polynomial in $U$ and $U^*$ if and only if $a$ is of the form:
\begin{equation}\label{poly_U}
a= \sum_{n\ge0} U^na_n(\K) + \sum_{n<0}a_n(\K)(U^*)^{-n} \qquad\textrm{finite sums}
\end{equation}
and for every $n$, the functions $k\mapsto a_n(k)$ are eventually constant.

Such a Fourier series can be produced for any $a\in\mathcal{T}$ in the following way. It is easy to see that the map $\rho_\theta:\mathcal{T}\to\mathcal{T}$ given by
\begin{equation}\label{rho_def}
\rho_\theta(a) = e^{2\pi i\K}ae^{-2\pi i\K}\quad\textrm{for}\quad \theta\in\R/\Z
\end{equation}
is a continuous one-parameter group of automorphisms.  This can be verified directly and it also follows from the universality of $\mathcal{T}$ as the universal C$^*$-algebra generated by $U$ and $U^*$ so that $U^*U=I$, known as Coburn's Theorem \cite{Cob}.  Moreover, we have that
\begin{equation*}
\rho_\theta(U) = e^{2\pi i\theta}U\,,\quad \rho_\theta(U^*)=e^{-2\pi i\theta}U^*\,,\quad\textrm{and}\quad\rho_\theta(a(\K)) = a(\K)\,.
\end{equation*}
Thus for a polynomial $a$ in $\mathcal{T}$, given by equation \eqref{poly_U}, we have
\begin{equation*}
\rho_\theta(a) = \sum_{n\ge0}e^{2\pi in\theta}U^na_n(\K) + \sum_{n<0}e^{-2\pi in\theta}a_n(\K)(U^*)^n\,.
\end{equation*}
The fixed point algebra $\mathcal{T}_{\textrm{diag}}$ in $\mathcal{T}$ with respect to $\rho_\theta$ is the algebra of diagonal operators:
\begin{equation*}
\mathcal{T}_{\textrm{diag}} = \{ f(\K) : \lim_{k\to\infty} f(k) = f(\infty)<\infty\}\,.
\end{equation*}
Let $E:\mathcal{T}\to\mathcal{T}_{\textrm{diag}}$ be the corresponding expectation defined by
\begin{equation*}
E(a) = \int_0^1 \rho_\theta(a)\,d\theta\,.
\end{equation*}
For any $a\in\mathcal{T}$, the Fourier series coefficients $a_(\K)$ are defined as:
\begin{equation*}
a_n(\K) = \left\{
\begin{aligned}
&E((U^*)^na) &&\textrm{ if }n\ge0 \\
&E(aU^{-n}) &&\textrm{ if }n<0\,.
\end{aligned}\right.
\end{equation*}
Without further assumptions the formal series representation given by \eqref{poly_U}) is, in general, not norm convergent.  However, the Fourier coefficients uniquely determine $a$ and if the series converges absolutely, then it converges to $a$ by the usual Fourier series arguments, see \cite{K} for details.    

It is known that $\mathcal{K}$ is isomorphic to the commutator ideal in $\mathcal{T}$, and $\mathcal{T}/\mathcal{K}\cong C(\R/\Z)$, see \cite{KD}.  Let ${q}:\mathcal{T}\to C(\R/\Z)$ be the corresponding quotient map.  

\subsection{Toeplitz Operators}
We identify $H$ with $\ell^2(\Z_{\ge0})$ through the basis $\{E_k\}$.  In turn, $\ell^2(\Z_{\ge0})$ is a closed subspace of $\ell^2(\Z)$.  Let $P:\ell^2(\Z)\to\ell^2(\Z_{\ge0})$ be the corresponding orthogonal projection.  If $f\in C(\R/\Z)$, consider the multiplication operator by $f$ in $L^2(\R/\Z)$.  Using Fourier series, we can naturally identify $L^2(\R/\Z)$ with $\ell^2(\Z)$, and thus let $M_f$ be the corresponding multiplication operator by $f$ in $\ell^2(\Z)$.

The Toeplitz operator $T(f)$ in $\ell^2(\Z_{\ge0})$ and hence in $H$, is defined by:
\begin{equation*}
T(f)\varphi = PM_f\varphi\quad\textrm{for}\quad \varphi\in\ell^2(\Z_{\ge0})\subseteq\ell^2(\Z)\,.
\end{equation*}
It is easy to see that
\begin{equation*}
T(e^{2\pi ix}) = U\quad\textrm{and}\quad T(e^{-2\pi ix}) = U^*\,.
\end{equation*}
Also, we have that $T(f)^* = T({\overline{f}})$ and that
\begin{equation}\label{Tfnorm}
\|T(f)\|\le \underset{x\in\R/\Z}{\textrm{sup}}|f(x)| = \|f\|_\infty\,.
\end{equation}
It follows that for every $f\in C(\R/\Z)$, one has $T(f)\in\mathcal{T}$.  Moreover, we have an important relation for the quotient map:
$${q} T(f) = f.$$

If $f\in C^\infty(\R/\Z)$, then $f$ has a convergent Fourier series given by:
\begin{equation}\label{fun_series}
f(x) = \sum_{n\in\Z} f_ne^{2\pi inx}\,.
\end{equation}
In fact, $f\in C^\infty(\R/\Z)$ if and only if $\{f_n\}_{n\in\Z}$ is a RD sequence.  It follows from the above that if $f\in C^\infty(\R/\Z)$, then $T(f)$ has the following norm convergent series representation:
\begin{equation*}
T(f) = \sum_{n\ge0} f_nU^n + \sum_{n<0}f_n(U^*)^{-n}\,.
\end{equation*}
Also notice that if $f\in C^\infty(\R/\Z)$, then $T(f)$ is in the domain of $\delta_\K$ since
\begin{equation}\label{delta_k_2}
\delta_\K(T(f)) = \sum_{n\ge0} nf_nU^n + \sum_{n<0}nf_n(U^*)^{-n} = T\left(\frac{1}{2\pi i}\frac{d}{dx}f\right)\,.
\end{equation}
Thus we call such Toeplitz operators to be smooth Toeplitz operators.  Additionally, we will need the fact that $T(f)\in\mathcal{K}$ if and only if $f=0$,  see \cite{KD} for details.

\subsection{Smooth Functions on the Circle}
We will use the following norms on the space of smooth functions on the circle $C^\infty(\R/\Z)$:
\begin{equation*}
\|f\|_{C^l}=\sum_{j=0}^l\begin{pmatrix} l\\j\end{pmatrix}\left\|\left(\frac{1}{2\pi i}\frac{d}{dx}\right)^jf\right\|_\infty.
\end{equation*}
The norms are submultiplicative and they satisfy the following inductive property:
\begin{equation}\label{Cl_inductive}
\|f\|_{C^{l+1}}=\|f\|_{C^l}+\left\|\frac{1}{2\pi i}\frac{d}{dx}\,f\right\|_{C^l}.
\end{equation}
Using Fourier series \eqref{fun_series} we have the following inequality:
\begin{equation*}
\|f\|_{C^l}\leq \sum_{j=0}^l\begin{pmatrix} l\\j\end{pmatrix}\| \sum_{n\in\Z}n^j f_ne^{2\pi inx}\|\leq \sum_{n\in\Z}|f_n|(1+|n|)^{l}\,.
\end{equation*}
We also have the other way inequality:
\begin{equation}\label{ftnorm_in}
\begin{aligned}
&\sum_{n\in\Z}|f_n|(1+|n|)^{l}\leq \sum_{j=0}^{l+2}\begin{pmatrix} l+2\\j\end{pmatrix}\sum_{n\in\Z}|n^jf_n|\frac{1}{(1+|n|)^2}=\\
&= \sum_{j=0}^{l+2}\begin{pmatrix} l+2\\j\end{pmatrix}\sum_{n\in\Z}\left|\int_0^1e^{-2\pi inx}\left(\frac{1}{2\pi i}\frac{d}{dx}\right)^jf(x)\,dx\right|\frac{1}{(1+|n|)^2}\leq\left(\frac{\pi^2}{3}-1\right)\|f\|_{C^{l+2}}\,.
\end{aligned}
\end{equation}
Thus, the norms $\sum_{n\in\Z}|f_n|(1+|n|)^{l}$ are equivalent to the norms $\|f\|_{C^l}$.

\subsection{Properties of Smooth Toeplitz Algebra}
We define $\mathcal{T}^\infty$ to be the subset of $\mathcal{T}$ consisting of sums of smooth Toeplitz operators and smooth compact operators, that is
\begin{equation*}
\mathcal{T}^\infty = \{a = T(f) + c: f\in C^\infty(\R/\Z), c\in\mathcal{K}^\infty\}\,.
\end{equation*}
Such a sum decomposition is unique since $T(f)\in \mathcal{K}^\infty$ if and only if $f=0$.  So, as a vector space, we have
\begin{equation*}
\mathcal{T}^\infty \cong C^\infty(\R/\Z) \oplus \mathcal{K}^\infty
\end{equation*}
and this has a structure of a Fr\'{e}chet space.  

In terms of Fourier series we have for an $a=T(f)+c\in\mathcal{T}^\infty$ that
\begin{equation*}
a= \sum_{n\ge0}U^n(f_n + c_n(\K)) + \sum_{n<0}(f_n + c_n(\K))(U^*)^{-n}\,,
\end{equation*}
and $\{f_n\}_{n\in\Z}$ is a RD sequence and $\{c_n(k)\}_{n\in\Z,k\ge0}$ is a RD sequence in both indices.  It is clear from this expansion that $\delta_\K$ is a continuous derivation in $\mathcal{T}^\infty$.  The main properties of the smooth Toeplitz algebra are summarized in the following proposition.

\begin{prop}\label{smooth_toep_props}
If $f\in C^\infty(\R/\Z)$ and $c\in\mathcal{K}^\infty$, then both $T(f)c$ and $cT(f)$ are in $\mathcal{K}^\infty$.  If $f$ and $g$ are in $C^\infty(\R/\Z)$, then
\begin{equation*}
T(f)T(g)- T(fg)\in\mathcal{K}^\infty\,.
\end{equation*}
Consequently, $\mathcal{T}^\infty$ is a $*$-subalgebra of $\mathcal{T}$ with $a^* = T({\overline{f}}) + c^*$ for a given $a\in\mathcal{T}^\infty$.  Moreover, $\mathcal{K}^\infty$ is a two-sided ideal of $\mathcal{T}^\infty$ with
\begin{equation*}
\mathcal{T}^\infty/\mathcal{K}^\infty\cong C^\infty(\R/\Z)\,,
\end{equation*}
and the quotient map ${q}$ acts as ${q}(T(f)+c) = f$ with $f\in C^\infty(\R/\Z)$.
\end{prop}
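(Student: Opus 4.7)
The plan is to reduce the proposition to two substantive estimates: first, that $T(f)c$ and $cT(f)$ lie in $\mathcal{K}^\infty$ whenever $f\in C^\infty(\R/\Z)$ and $c\in\mathcal{K}^\infty$, and second, that $T(f)T(g)-T(fg)\in\mathcal{K}^\infty$ for smooth $f,g$. From these, the $*$-algebra, ideal, and quotient statements follow by essentially formal manipulations.

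For the first estimate, I would apply Proposition \ref{MN-norm_basics}(4) to obtain $\|T(f)c\|_{M,N}\leq \|T(f)\|_{M,0}\|c\|_{M,N}$, and then observe, using the identity $\delta_\K T(f)=T(\frac{1}{2\pi i}\frac{d}{dx}f)$ from \eqref{delta_k_2} iterated together with the bound \eqref{Tfnorm}, that $\|T(f)\|_{M,0}$ is controlled by the $C^M$ seminorm of $f$, and hence finite. This gives $T(f)c\in\mathcal{K}^\infty$. The statement for $cT(f)$ then follows from the identity $cT(f)=(T(\overline{f})c^*)^*$ together with Proposition \ref{MN-norm_basics}(6), which ensures both $c^*\in\mathcal{K}^\infty$ and that taking adjoints preserves $\mathcal{K}^\infty$.

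The heart of the proof is recognizing $T(f)T(g)-T(fg)$ as a Hankel-type operator. Letting $Q=I-P$, a direct computation from the definitions yields
\begin{equation*}
T(f)T(g)-T(fg) = -PM_fQM_gP.
\end{equation*}
Expanding $f$ and $g$ in their Fourier series and tracking the action on the basis vectors $E_k$, one finds that the $(l,k)$-matrix coefficient equals $-\sum_{m\geq 0} g_{-m-k-1}f_{l+m+1}$. Since $f,g\in C^\infty(\R/\Z)$ their Fourier coefficients decay faster than any polynomial, and the elementary inequality $(m+k+2)^{-N}\leq (1+m)^{-a}(1+k)^{-b}$ valid whenever $a+b\leq N$ allows one to extract rapid decay simultaneously in both $l$ and $k$. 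The resulting bound $|a_{l,k}|\leq C_{A,B}(1+l)^{-A}(1+k)^{-B}$ for arbitrary $A,B$ places the operator in $\mathcal{K}^\infty$.

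Once these two facts are in place, expanding the product $(T(f)+c)(T(g)+d)$ produces $T(fg) + (T(f)T(g)-T(fg)) + T(f)d + cT(g) + cd$, and closure of $\mathcal{K}^\infty$ under multiplication (from Proposition \ref{MN-norm_basics}(4)) places every non-$T(fg)$ term in $\mathcal{K}^\infty$, so $\mathcal{T}^\infty$ is closed under multiplication. The adjoint formula $a^*=T(\overline{f})+c^*$ is immediate. The ideal property is exactly the first estimate, and the quotient identification follows from the uniqueness of the decomposition $a=T(f)+c$ together with $qT(f)=f$. I expect the main obstacle to be the Hankel step: the bookkeeping of Fourier indices through the product $-PM_fQM_gP$ and, in particular, extracting the two-variable rapid decay for the matrix coefficients rather than only a joint estimate in one direction.
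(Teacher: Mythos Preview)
Your argument is correct. For $T(f)c$ and $cT(f)$ you proceed essentially as the paper does: the paper also invokes $\delta_\K T(f)=T\!\left(\tfrac{1}{2\pi i}\tfrac{d}{dx}f\right)$ and \eqref{Tfnorm} to bound the $\|\cdot\|_{M,0}$ norm of $T(f)$, and handles $cT(f)$ via the adjoint. The formal consequences (algebra, ideal, quotient) are drawn in the same way.

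Where you genuinely diverge is in the treatment of $T(f)T(g)-T(fg)$. You use the Hankel identity $-PM_fQM_gP$ and estimate matrix coefficients directly, extracting two-variable rapid decay from the rapid decay of $\{f_n\}$ and $\{g_n\}$. The paper instead splits $f=f_++f_-$, $g=g_++g_-$, reduces to $T(f_+)T(g_-)-T(g_-)T(f_+)$, and derives the operator expansion
\[
T(f)T(g)-T(fg)=-\sum_{n<0}g_n(U^*)^{-n}T(f_+)P_{<-n},
\]
then applies $\delta_\K^l$ term by term and bounds each $\|\cdot\|_{0,N}$ seminorm using $\|P_{<-n}\|_{0,N}=|n|^N$. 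Your route is shorter and more elementary, since it never leaves the level of coefficients; the paper's route is more operator-theoretic and, crucially, yields the explicit quantitative bound \eqref{TProd_estimate}, namely $\|T(f)T(g)-T(fg)\|_{M,N}\le\textrm{const}\cdot\|g\|_{C^{M+N+2}}\|f\|_{C^M}$, which is then fed into the construction of submultiplicative norms on $\mathcal{T}^\infty$ in the following proposition. Your matrix-coefficient estimate proves membership in $\mathcal{K}^\infty$ but would require reworking to recover that specific norm inequality.
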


\begin{proof}
Notice that from Proposition \ref{N-norm_basics}, we have:
\begin{equation*}
\|T(f)c\|_{0,N}\le \|T(f)\|\|c\|_{0,N}<\infty\,.
\end{equation*}
Since $\delta_\K$ is a derivation we can write
\begin{equation*}
\delta_\K(T(f)c) = T\left(\frac{1}{2\pi i}\frac{d}{dx}f\right)c + T(f)\delta_\K(c)\,,
\end{equation*}
and in general
\begin{equation*}
\delta_\K^j(T(f)c) = \sum_ia_ib_i\,,\textrm{ finite sum,}
\end{equation*}
where the $a_i$ are bounded and the $b_i$ are smooth compact.  It therefore follows from Proposition \ref{N-norm_basics} that  $\|T(f)c\|_{M,N}<\infty$
and hence $T(f)c\in\mathcal{K}^\infty$.  In fact, we have by induction on $M$:
\begin{equation}\label{leftT(f)}
\|T(f)c\|_{M,N}\leq \|f\|_{C^M}\|c\|_{M,N}\,.
\end{equation}
The inductive step is:
\begin{equation*}
\begin{aligned}
&\|T(f)c\|_{M+1,N}=\|T(f)c\|_{M,N}+\|\delta_\K(T(f))c+T(f)\delta_\K(c)\|_{M,N}\leq \\
&\leq \left(\|f\|_{C^M}+\|f\|_{C^M}\right)(\|c\|_{M,N}+\|\delta_\K(c)\|_{M,N})=\|f\|_{C^{M+1}}\|c\|_{M+1,N}\,.
\end{aligned}
\end{equation*}

Since $\mathcal{K}^\infty$ is $*$-closed, we have that $cT(f)\in\mathcal{K}^\infty$.  A little more work gives an inequality:
\begin{equation}\label{rightT(f)}
\|cT(f)\|_{M,N}\leq \|f\|_{C^{M+N}}\|c\|_{M,N}\,.
\end{equation}
To prove it we proceed by induction on $M$. The case $M=0$ is the key and requires the bulk of the work. In estimating norms $\|cT(f)\|_{0,N}=\|cT(f)(I+\K)^M\|$ we pass to adjoints and then rewrite them in terms of derivations $\partial_N$ as follows:
\begin{equation*}
\|cT(f)(I+\K)^M\|=\|(I+\K)^MT(\bar f)c^*\|\leq \|\partial_N(T(\bar f))c^*\|+\|T(\bar f)(I+\K)^Mc^*\|\,.
\end{equation*}
Next, we use the identity \eqref{partial_delta} to express $\partial_N(T(\bar f))$ in terms of powers of $\delta_\K(T(\bar f))$:
\begin{equation*}
\|cT(f)\|_{0,N}\leq \sum_{j=0}^N\begin{pmatrix} N\\j\end{pmatrix}\|\delta_\K^j(T(\bar f))(I+\K)^{N-j}c^*\|\,.
\end{equation*}
Passing to adjoints again and using \eqref{delta_k_2} we can estimate as follows:
\begin{equation*}
\|cT(f)\|_{0,N}\leq \sum_{j=0}^N\begin{pmatrix} N\\j\end{pmatrix}\left\|T\left(\left(\frac{1}{2\pi i}\frac{d}{dx}\right)^jf\right)\right\|\|c\|_{0,N-j}\leq \|f\|_{C^{N}}\|c\|_{0,N}\,,
\end{equation*}
which concludes the case $M=0$. The inductive step is straightforward:
\begin{equation*}
\begin{aligned}
&\|cT(f)\|_{M+1,N}=\|cT(f)\|_{M,N}+\|\delta_K(c)T(f)+c\delta_\K(T(f))\|_{M,N}\leq\|c\|_{M,N}\|f\|_{C^{M+N}}+\\
&+\|\delta_\K(c)\|_{M,N}\|f\|_{C^{M+N}}+\|c\|_{M,N}\left\|\left(\frac{1}{2\pi i}\frac{d}{dx}\right)f\right\|_{C^{M+N}}\leq \|f\|_{C^{M+1+N}}\|c\|_{M+1,N}\,
\end{aligned}
\end{equation*}
which establishes \eqref{rightT(f)}.

We proceed to studying $T(f)T(g)- T(fg)$. Let $f$ and $g$ be in $C^\infty(\R/\Z)$ and write them in their respective Fourier decomposition, that is
\begin{equation*}
f(x) = \sum_{n\in\Z}f_ne^{2\pi inx}\quad\textrm{and}\quad g(x) = \sum_{n\in\Z}g_ne^{2\pi inx}\,.
\end{equation*}
Split $f$ into its positive and negative frequencies: $f= f_+ + f_-$ where
\begin{equation*}
f_+(x) = \sum_{n\ge0}f_ne^{2\pi inx}\quad\textrm{and}\quad f_-(x) = \sum_{n<0}f_ne^{2\pi inx}\,.
\end{equation*}
Also split $g$ in the same fashion.  A direct calculation shows that
\begin{equation*}
T(f_+)T(g_+) = T(f_+g_+)\,,\,\,T(f_-)T(g_-) = T(f_-g_-)\,,\,\,T(g_-)T(f_+) = T(g_-f_+)\,.
\end{equation*}
Using these relations, it follows that
\begin{equation*}
T(f)T(g) - T(fg) = T(f_+)T(g_-) - T(f_+g_-) = T(f_+)T(g_-) - T(g_-)T(f_+)\,.
\end{equation*}
For $n<0$, let $P_{<-n}$ be the orthogonal projection in $H$ onto the subspace spanned by $\{E_k\}_{k<-n}$.  Using the fact that
\begin{equation*}
U^{-n}(U^*)^{-n} = I - P_{<-n}
\end{equation*}
for $n<0$ and expanding $T(g_-)$ as
\begin{equation*}
T(g_-) = \sum_{n<0}g_n(U^*)^{-n}\,,
\end{equation*}
we obtain that
\begin{equation*}
T(f)T(g)-T(fg) = -\sum_{n<0}g_n(U^*)^{-n}T(f_+)P_{<-n}\,.
\end{equation*}
Applying $\delta_\K$ to $T(f)T(g)-T(fg)$ $l$-times and using the formula \eqref{delta_k_1} for $\delta_\K(U^*)$ and the formula \eqref{delta_k_2} for $\delta_\K(T(f))$, we get
\begin{equation*}
\delta_\K^l(T(f)T(g)-T(fg)) = -\sum_{n<0}\sum_{j=0}^l\begin{pmatrix} l\\j\end{pmatrix}n^{l-j}g_n(U^*)^{-n}T\left(\left(\frac{1}{2\pi i}\frac{d}{dx}\right)^jf_+\right)P_{<-n}\,.
\end{equation*}
To estimate the norms we first notice that
\begin{equation*}
\|P_{<-n}\|_{0,N} =|n|^N\,.
\end{equation*}
Consequently, we have
\begin{equation*}
\begin{aligned}
\|\delta_\K^l(T(f)T(g)-T(fg))\|_{0,N} &\le\sum_{n<0}\sum_{j=0}^l|g_n|\,|n|^{l-j+N}\begin{pmatrix} l\\j\end{pmatrix}\left\|\left(\frac{1}{2\pi i}\frac{d}{dx}\right)^jf_+\right\| \\
&\le\sum_{n<0}|g_n|(1+|n|)^{l+N}\|f\|_{C^l}\le \left(\frac{\pi^2}{3}-1\right)\|g\|_{C^{l+N+2}}\|f\|_{C^l}<\infty\,,
\end{aligned}
\end{equation*}
from estimate \eqref{ftnorm_in}.
Therefore, we finally get $\|T(f)T(g)-T(fg)\|_{M,N}<\infty$. In fact, with a little more work, we have an estimate:
\begin{equation}\label{TProd_estimate}
\|T(f)T(g)-T(fg)\|_{M,N}\leq \left(\frac{\pi^2}{3}-1\right)\|g\|_{C^{M+N+2}}\|f\|_{C^M} \quad\textrm{for all}\quad M,N\,.
\end{equation}
The key step is contained in the following bound that uses a binomial coefficients inequality:
\begin{equation*}\begin{aligned}
&\sum_{l=0}^M\sum_{j=0}^l\begin{pmatrix} M\\l\end{pmatrix}\begin{pmatrix} l\\j\end{pmatrix}|n|^{l-j}\left\|\left(\frac{1}{2\pi i}\frac{d}{dx}\right)^jf\right\| \leq \sum_{l=0}^M\sum_{j=0}^l\begin{pmatrix} M\\l-j\end{pmatrix}\begin{pmatrix} M\\j\end{pmatrix}|n|^{l-j}\left\|\left(\frac{1}{2\pi i}\frac{d}{dx}\right)^jf\right\|\\
&\leq \sum_{j=0}^M\sum_{k=0}^M\begin{pmatrix} M\\k\end{pmatrix}\begin{pmatrix} M\\j\end{pmatrix}|n|^{k}\left\|\left(\frac{1}{2\pi i}\frac{d}{dx}\right)^jf\right\| = (1+|n|)^M\|f\|_{C^M}\,.
\end{aligned}
\end{equation*}
\end{proof}

We notice here for future use, that the proof above implies that the maps $c\mapsto T(f)c$ and $c\mapsto cT(f)$ are continuous maps of $\mathcal{K}^\infty$ for $f\in C^\infty(\R/\Z)$.  Compare this to our previous discussion that the maps $c\mapsto c\K$ and $c\mapsto \K c$ are also continuous maps of $\mathcal{K}^\infty$.  This proof also demonstrates that given a pair $(f,g)\in C^\infty(\R/\Z)\times C^\infty(\R/\Z)$ the map
\begin{equation*}
(f,g)\mapsto T(f)T(g)-T(fg)\in\mathcal{K}^\infty
\end{equation*}
is jointly continuous.

Choose a constant $S$ such that $S^2\geq S+ \frac{\pi^2}{3}-1$. The smallest such $S$ is
\begin{equation*}
S=\frac12+\sqrt{\frac{4\pi^2-9}{12}}\,.
\end{equation*}
Using the above analysis we can now construct submultiplicative norms on $\mathcal{T}^\infty$.
\begin{prop}
The Toeplitz algebra $\mathcal{T}^\infty$ is a $*$-Fr\'{e}chet algebra with respect to the following submultiplicative norms:
\begin{equation*}
\|T(f)+c\|_{M,N}:=S\|f\|_{C^{M+N+2}}+\|c\|_{M,N}\,.
\end{equation*}
\end{prop}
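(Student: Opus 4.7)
The plan is to verify the three features of a $*$-Fr\'echet algebra separately: that the given quantities are norms defining a Fr\'echet topology, that multiplication is submultiplicative (hence jointly continuous), and that the involution is continuous. The first piece is essentially free: by Proposition \ref{smooth_toep_props}, every $a\in\mathcal{T}^\infty$ has a unique decomposition $a=T(f)+c$ with $f\in C^\infty(\R/\Z)$ and $c\in\mathcal{K}^\infty$, and the proposed quantity is the sum of the two norms $S\|f\|_{C^{M+N+2}}$ and $\|c\|_{M,N}$ on the two summands. Hence $\mathcal{T}^\infty\cong C^\infty(\R/\Z)\oplus\mathcal{K}^\infty$ as topological vector spaces, and completeness follows from completeness of $C^\infty(\R/\Z)$ and of $\mathcal{K}^\infty$ (the latter by Proposition \ref{MN-norm_basics}(7)).

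The central step is submultiplicativity. Writing $a=T(f)+c$ and $b=T(g)+d$, I would expand
\begin{equation*}
ab=T(fg)+\bigl(T(f)T(g)-T(fg)\bigr)+T(f)d+cT(g)+cd,
\end{equation*}
so the $C^\infty$-part of $ab$ is $fg$ and the remainder is smooth compact. Applying successively the estimate \eqref{TProd_estimate} for $T(f)T(g)-T(fg)$, the bound \eqref{leftT(f)} for $T(f)d$, the bound \eqref{rightT(f)} for $cT(g)$, the submultiplicativity of $\|\cdot\|_{M,N}$ for $cd$, and submultiplicativity of $\|\cdot\|_{C^l}$ for $fg$, I obtain
\begin{equation*}
\|ab\|_{M,N}\leq S\|f\|_{C^{M+N+2}}\|g\|_{C^{M+N+2}}+\tfrac{\pi^2-3}{3}\|f\|_{C^M}\|g\|_{C^{M+N+2}}+\|f\|_{C^M}\|d\|_{M,N}+\|g\|_{C^{M+N}}\|c\|_{M,N}+\|c\|_{M,N}\|d\|_{M,N}.
\end{equation*}
Using the monotonicity $\|f\|_{C^M}\leq\|f\|_{C^{M+N+2}}$ (similarly for $g$) and the inequality $S\geq 1$, the right-hand side is bounded above by $(S+\tfrac{\pi^2}{3}-1)\|f\|_{C^{M+N+2}}\|g\|_{C^{M+N+2}}+S(\|f\|_{C^{M+N+2}}\|d\|_{M,N}+\|g\|_{C^{M+N+2}}\|c\|_{M,N})+\|c\|_{M,N}\|d\|_{M,N}$. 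The defining inequality $S^2\geq S+\tfrac{\pi^2}{3}-1$ then exactly accommodates the first term, and the whole expression factors as $\|a\|_{M,N}\|b\|_{M,N}$.

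For the involution, Proposition \ref{smooth_toep_props} gives $(T(f)+c)^*=T(\bar f)+c^*$, and since complex conjugation preserves each $C^l$-norm and $\|c^*\|_{M,N}\leq\|c\|_{M+N,N}$ by Proposition \ref{MN-norm_basics}(6), we get $\|a^*\|_{M,N}\leq\|a\|_{M+N,N}$, which shows the involution is continuous in the Fr\'echet topology. The main obstacle, and the reason the proof is not mere bookkeeping, is the cross term $(\pi^2/3-1)\|g\|_{C^{M+N+2}}\|f\|_{C^M}$ coming from the commutator of Toeplitz operators: it is not dominated by $\|f\|\|d\|+\|g\|\|c\|+\|c\|\|d\|$ and must be absorbed into the $S^2\|f\|\|g\|$ contribution, which is precisely what forces the specific quadratic condition on $S$ and the shift by $+2$ in the index of the $C^l$-seminorm.
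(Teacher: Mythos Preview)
Your proof is correct and follows exactly the same route as the paper: expand the product as $T(fg)$ plus the four smooth compact pieces, apply \eqref{leftT(f)}, \eqref{rightT(f)}, \eqref{TProd_estimate}, and the submultiplicativity of $\|\cdot\|_{M,N}$ and $\|\cdot\|_{C^l}$, then use $S^2\geq S+\tfrac{\pi^2}{3}-1$ (and $S\geq1$) to factor the resulting bound. The paper's own argument is only a one-line sketch invoking these same ingredients, so your version is simply a more detailed execution of the intended proof, with the additional (and correct) verification of completeness and $*$-continuity that the paper takes for granted.
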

\begin{proof}
We only need to address the submultiplicativity of the above norms. We write the product of two elements of $\mathcal{T}^\infty$ as:
\begin{equation*}
(T(f_1)+c_1)(T(f_2)+c_2)=T(f_1f_2)+\left((T(f_1)T(f_2)-T(f_1f_2))+c_1T(f_2)+T(f_1)c_2+c_1c_2\right).
\end{equation*}
The submultiplicativity of the norms now easily follows using inequalities \eqref{leftT(f)}, \eqref{rightT(f)}, \eqref{TProd_estimate}, submultiplicativity of $\|\cdot\|_{M,N}$ on $\mathcal{K}^\infty$, submultiplicativity of norms $\|\cdot\|_{C^{M}}$ and the definition of $S$.
\end{proof}

\subsection{Stability of Smooth Toeplitz Algebra}
The last properties of $\mathcal{T}^\infty$ discussed in this section are the stability under the holomorphic calculus and the stability under smooth calculus of self-adjoint elements. The first stability implies that $\mathcal{T}^\infty$ has the same $K$-Theory as $\mathcal{T}$, see \cite{Bo}. 
\begin{theo}\label{ToepStableProp}
The smooth Toeplitz algebra $\mathcal{T}^\infty$ is closed under the holomorphic functional calculus. In other words, for any $a\in \mathcal{T}^\infty$ and a function $f$ that is holomorphic on an open domain containing the spectrum of $a$ we have $f(c)\in \mathcal{T}^\infty$.
\end{theo}

\begin{proof}
Since $\mathcal{T}^\infty$ is a Fr\'{e}chet space, it is enough to check that if $a\in\mathcal{T}^\infty$ and invertible in $\mathcal{T}$, then $a^{-1}\in\mathcal{T}^\infty$.  The statement will then follow from the usual Cauchy integral representation.  

Let $a\in\mathcal{T}^\infty$, then $a=T(f) +c$ with $f\in C^\infty(\R/\Z)$ and $c\in\mathcal{K}^\infty$ and suppose $a$ is invertible in $\mathcal{T}$.  Since the quotient map ${q}$ is a homomorphism, ${q}(a)=f$ is invertible in $C(\R/\Z)$, that is $f(x)\neq0$ for all $x\in\R/\Z$.  However, since $f$ is smooth, it follows that $1/f$ is smooth.  Therefore we have
\begin{equation*}
a^{-1} = T(1/f) + b
\end{equation*}
for some $b\in\mathcal{K}$.  The proof will be complete if we can show that $b\in\mathcal{K}^\infty$.  Notice that
\begin{equation*}
b = a^{-1} - T(1/f) = a^{-1}(I-aT(1/f)) = a^{-1}(I - T(f)T(1/f) + cT(1/f))\,.
\end{equation*}
From Proposition \ref{smooth_toep_props}, we have that both $1-T(f)T(1/f)$ and $cT(1/f)$ are in $\mathcal{K}^\infty$.  Consequently, there is a $\tilde{c}\in\mathcal{K}^\infty$ such that $b=a^{-1}\tilde{c}$.  It follows from Proposition \ref{N-norm_basics} that
\begin{equation}\label{inverse_est}
\|b\|_{0,N}\le\|a^{-1}\|\|\tilde{c}\|_{0,N}<\infty\,.
\end{equation}
Computing $\delta_\K$ on $b$ we have
\begin{equation*}
\delta_\K(b) = \delta_\K(a^{-1})\tilde{c} + a^{-1}\delta_\K(\tilde{c}) = -a^{-1}\delta_\K(a)a^{-1}\tilde{c} + a^{-1}\delta_\K(\tilde{c})\,.
\end{equation*}
So, as in the proofs of Proposition \ref{invert_K_infty} or in Proposition \ref{smooth_toep_props}, we have, inductively, for any $j$ that
\begin{equation*}
\delta_\K^j(b) = \sum_ia_ib_i \quad\textrm{finite sum,}
\end{equation*}
with $a_i$ bounded and $b_i$ are smooth compact.  Using this and the estimate in equation \eqref{inverse_est}, we see that $\|b\|_{M,N}$ is finite for all $M$ and $N$.  Thus $b\in\mathcal{K}^\infty$, completing the proof.
\end{proof}

Next we will discuss stability of $\mathcal{T}^\infty$ under the smooth functional calculus of self-adjoint elements. Similarly to the discussion of stability of $\mathcal{K}^\infty$, this will require controlling growth of norms of exponentials of elements of $\mathcal{T}^\infty$.

\begin{prop}\label{exp_of_f}
If $f\in C^\infty(\R/\Z)$ is real, $f=\bar f$, then we have an estimate:
\begin{equation*}
\|e^{if}\|_{C^M} \leq \prod_{j=1}^M (1+\|f\|_{C^j})\,.
\end{equation*}
\end{prop}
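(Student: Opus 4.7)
The plan is to induct on $M$, leveraging the inductive identity \eqref{Cl_inductive} for the norms $\|\cdot\|_{C^l}$ together with their submultiplicativity, which were both recorded just before the smooth Toeplitz subsection.

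The base case $M=0$ is immediate: because $f$ is real-valued, $|e^{if(x)}|=1$ pointwise, so $\|e^{if}\|_{C^0} = \|e^{if}\|_\infty = 1$, matching the empty product on the right.

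For the inductive step, abbreviate $D = \frac{1}{2\pi i}\frac{d}{dx}$. A direct computation gives the key identity
\begin{equation*}
D(e^{if}) = i\, D(f)\, e^{if},
\end{equation*}
which one verifies by applying the ordinary derivative and dividing by $2\pi i$. Using the recursion \eqref{Cl_inductive}, submultiplicativity of $\|\cdot\|_{C^M}$, and the elementary bound $\|D(f)\|_{C^M} \leq \|f\|_{C^{M+1}}$ (itself a consequence of \eqref{Cl_inductive} applied to $f$), I would estimate
\begin{equation*}
\|e^{if}\|_{C^{M+1}} = \|e^{if}\|_{C^M} + \|D(e^{if})\|_{C^M} \leq \bigl(1 + \|D(f)\|_{C^M}\bigr)\|e^{if}\|_{C^M} \leq \bigl(1 + \|f\|_{C^{M+1}}\bigr)\|e^{if}\|_{C^M},
\end{equation*}
and then apply the inductive hypothesis to the trailing factor to close the induction.

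There is no real obstacle here; the argument is essentially mechanical once one observes that the product structure on the right-hand side of the claimed bound is exactly what telescopes out of the recursion \eqref{Cl_inductive} combined with submultiplicativity. The slight subtlety worth flagging is that one must use submultiplicativity in the form $\|gh\|_{C^M}\leq \|g\|_{C^M}\|h\|_{C^M}$ (valid for the $C^l$-norms as noted in the text), rather than any finer product rule, since the whole point is to absorb the factor $\|e^{if}\|_{C^M}$ into the inductive hypothesis.
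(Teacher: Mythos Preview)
Your proof is correct and follows exactly the approach the paper indicates: induction on $M$ via the recursion \eqref{Cl_inductive}, combined with submultiplicativity of the $C^l$-norms and the identity $D(e^{if})=iD(f)e^{if}$. The paper's own proof is only a one-line sketch (``inductive argument using formula \eqref{Cl_inductive}''), and you have accurately filled in the details.
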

\begin{proof} 
Clearly $\|e^{if}\|_{C^0}=1$. For $M\geq 1$ the inequality is a simple consequence of an inductive argument using formula \eqref{Cl_inductive}.
\end{proof}

\begin{prop}\label{exp_of_c}
Suppose that $c\in\mathcal{K}^\infty$ is a self-adjoint smooth compact operator. Then we have an estimate:
\begin{equation*}
\|e^{ic}\|_{M,0}\leq \prod_{j=1}^M (1+\|c\|_{j,0})^{2^{M-j}}\,.
\end{equation*}
\end{prop}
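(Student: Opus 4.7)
The proof will proceed by induction on $M$. The base case $M=0$ is immediate: since $c$ is self-adjoint, $e^{ic}$ is unitary, so $\|e^{ic}\|_{0,0}=\|e^{ic}\|=1$, which matches the empty product on the right-hand side.

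For the inductive step, the plan is to combine three ingredients. First, by Proposition \ref{MN-norm_basics}(2),
\begin{equation*}
\|e^{ic}\|_{M+1,0} \;=\; \|e^{ic}\|_{M,0} \;+\; \|\delta_\K(e^{ic})\|_{M,0},
\end{equation*}
so the task reduces to estimating $\|\delta_\K(e^{ic})\|_{M,0}$. Second, the same Duhamel calculation that produced formula \eqref{duhamel} (but applied to $\delta_\K=\partial_1$) gives
\begin{equation*}
\delta_\K(e^{ic}) \;=\; i\int_0^1 e^{i(1-t)c}\,\delta_\K(c)\, e^{itc}\,dt,
\end{equation*}
with all three factors in the integrand belonging to $\mathcal{K}^\infty$. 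Third, by the submultiplicativity of $\|\cdot\|_{M,0}$ from Proposition \ref{MN-norm_basics}(4), together with the standard inequality $\|\int F(t)\,dt\|_{M,0}\le\int\|F(t)\|_{M,0}\,dt$, I can bound
\begin{equation*}
\|\delta_\K(e^{ic})\|_{M,0} \;\le\; \int_0^1 \|e^{i(1-t)c}\|_{M,0}\,\|\delta_\K(c)\|_{M,0}\,\|e^{itc}\|_{M,0}\,dt.
\end{equation*}

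To finish, I will apply the inductive hypothesis to the two exponentials inside the integral. The operator $tc$ is self-adjoint for $t\in[0,1]$, and $\|tc\|_{j,0}=t\|c\|_{j,0}\le\|c\|_{j,0}$, so the hypothesis gives $\|e^{isc}\|_{M,0}\le P_M$ for all $s\in[0,1]$, where $P_M:=\prod_{j=1}^{M}(1+\|c\|_{j,0})^{2^{M-j}}$. Combining this with $\|\delta_\K(c)\|_{M,0}\le\|c\|_{M+1,0}$ from Proposition \ref{MN-norm_basics}(5), I obtain
\begin{equation*}
\|e^{ic}\|_{M+1,0} \;\le\; P_M + P_M^2\,\|c\|_{M+1,0}.
\end{equation*}
The final step is the elementary algebraic comparison
\begin{equation*}
P_M + P_M^2\,\|c\|_{M+1,0} \;\le\; P_M^2\bigl(1+\|c\|_{M+1,0}\bigr) \;=\; \prod_{j=1}^{M+1}(1+\|c\|_{j,0})^{2^{M+1-j}},
\end{equation*}
which holds because $P_M\ge 1$. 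This closes the induction.

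The main point requiring care is that the exponent $2^{M-j}$ on the right-hand side of the target inequality is essentially forced by this argument: each application of Duhamel turns one factor of $e^{ic}$ into two (one from each side of $\delta_\K(c)$), so the previous bound $P_M$ must appear squared, and this doubling of multiplicities propagates through the iteration. Verifying that no cross-terms spoil this doubling, and that self-adjointness of $c$ (needed for $\|e^{itc}\|=1$ in the base case and to ensure $tc$ is self-adjoint so the inductive hypothesis applies) is used consistently, is the only delicate point; everything else reduces to routine manipulation of the norm inequalities already proved.
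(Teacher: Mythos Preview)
Your proposal is correct and follows essentially the same route as the paper's proof: induction on $M$, the recursion $\|e^{ic}\|_{M+1,0}=\|e^{ic}\|_{M,0}+\|\delta_\K(e^{ic})\|_{M,0}$, the Duhamel formula for $\delta_\K(e^{ic})$, submultiplicativity of $\|\cdot\|_{M,0}$, and the final algebraic inequality $P_M+P_M^2\|c\|_{M+1,0}\le P_M^2(1+\|c\|_{M+1,0})$ via $P_M\ge 1$. If anything, your write-up is slightly more careful than the paper's in spelling out why the inductive hypothesis applies to $e^{itc}$ for $t\in[0,1]$.
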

\begin{proof} 
For $M=0$ we have $\|e^{ic}\|_{0,0}=1$. We proceed by induction on $M$ utilizing the relation:
\begin{equation*}
\|e^{ic}\|_{M+1,0} = \|e^{ic}\|_{M,0} + \|\delta_\K(e^{ic})\|_{M,0}\,.
\end{equation*}
Using \eqref{duhamel} for $j=1$ we have
\begin{equation*}
\delta_\K(e^{ic})=i\int_0^1e^{i(1-t)c}\delta_\K(c)e^{itc}\,dt\,.
\end{equation*}
Then the inductive step estimate is:
\begin{equation*}
\begin{aligned}
&\|e^{ic}\|_{M+1,0}\leq \|e^{ic}\|_{M,0}+i\int_0^1\|e^{i(1-t)c}\|_{M,0}\|\delta_\K(c)\|_{M,0}\|e^{itc}\|_{M,0}\,dt\leq\\
&\leq \prod_{j=1}^M (1+\|c\|_{j,0})^{2^{M-j}} +\left[\prod_{j=1}^M (1+\|c\|_{j,0})^{2^{M-j}}\right]^2\|\delta_\K(c)\|_{M,0}\,.
\end{aligned}
\end{equation*}
Since $\|\delta_\K(c)\|_{M,0}\leq\|c\|_{M+1,0}$, we have:
\begin{equation*}
\begin{aligned}
&\|e^{ic}\|_{M+1,0}\leq \prod_{j=1}^M (1+\|c\|_{j,0})^{2^{M-j}}(1+\prod_{j=1}^M (1+\|c\|_{j,0})^{2^{M-j}}\|c\|_{M+1,0})\leq\\
&\leq \prod_{j=1}^M (1+\|c\|_{j,0})^{2^{M-j}}\prod_{j=1}^M (1+\|c\|_{j,0})^{2^{M-j}}(1+\|c\|_{M+1,0})=\prod_{j=1}^{M+1} (1+\|c\|_{j,0})^{2^{M+1-j}}\,.
\end{aligned}
\end{equation*}
This establishes the inductive step and finishes the proof.
\end{proof}

\begin{theo}
The smooth Toeplitz algebra $\mathcal{T}^\infty$ is closed under the smooth functional calculus of self-adjoint elements. In other words, if $a\in\mathcal{T}^\infty$ is self-adjoint and $f(x)$ is a smooth function on an open neighborhood of the spectrum $\sigma(a)$ of $a$ then $f(a)$ is in $\mathcal{T}^\infty$.
\end{theo}
\begin{proof} Proceeding as in the proof of smooth stability of smooth compact operators we may assume that $f(x)$ is smooth on $\R$ and is $L$-periodic: $f(x+L)=f(x)$ for some $L$. Then $f(x)$ admits a Fourier series representation with rapid decay coefficients $\{f_n\}$ and we have:
\begin{equation*}
f(a)=\sum_{n\in\Z}f_ne^{2\pi ina/L}
\end{equation*}
for a self-adjoint $a=T(f)+c\in\mathcal{T}^\infty$. By submultiplicativity of the $\|\cdot\|_{M,N}$ on $\mathcal{T}^\infty$, the exponentials $e^{2\pi ina/L}$ are in $\mathcal{T}^\infty$. Thus, the theorem will be proved if we can establish at most polynomial growth in $n$ of norms $\|e^{2\pi ina/L}\|_{M,N}$.

Notice that the symbol ${q}\left(e^{2\pi ina/L}\right)$ in $C^\infty(\R/\Z)$ is $e^{2\pi inf/L}$, which, by Proposition \ref{exp_of_f} grows at most polynomially in $n$. Thus, we are reduced to establishing that the $\|\cdot\|_{M,N}$ of the difference
\begin{equation*}
e^{2\pi in(T(f)+c)/L}-T\left(e^{2\pi inf/L}\right)\in\mathcal{K}^\infty
\end{equation*}
are at most polynomially growing in $n$. 

To analyze the above expression we employ a version of the Duhamel's formula:
\begin{equation*}
\begin{aligned}
&e^{i(T(f)+c)}-T\left(e^{if}\right)=\int_0^1\frac{d}{dt}\left( e^{it(T(f)+c)}T\left(e^{i(1-t)f}\right)\right)dt=\\
&=\int_0^1 e^{it(T(f)+c)}c\,T\left(e^{i(1-t)f}\right)dt +\int_0^1 e^{it(T(f)+c)}\left[T(f)T\left(e^{i(1-t)f}\right)-T\left(fe^{i(1-t)f}\right)\right]dt\,.
\end{aligned}
\end{equation*}
Using Proposition \ref{MN-norm_basics} we can estimate the norms of those exponential as follows:
\begin{equation*}
\begin{aligned}
&\|e^{i(T(f)+c)}-T\left(e^{if}\right)\|_{M,N}\leq \int_0^1 \|e^{it(T(f)+c)}\|_{M,0}\|c\,T\left(e^{i(1-t)f}\right)\|_{M,N}\,dt+\\
& +\int_0^1 \|e^{it(T(f)+c)}\|_{M,0}\|T(f)T\left(e^{i(1-t)f}\right)-T\left(fe^{i(1-t)f}\right)\|_{M,N}\,dt\,.
\end{aligned}
\end{equation*}
All the terms in the above expression can now be estimated using \eqref{ftnorm_in}, \eqref{rightT(f)} and Propositions \ref{exp_of_f}, \ref{exp_of_c}.
We obtain the following bounds:
\begin{equation*}
\begin{aligned}
&\|e^{i(T(f)+c)}-T\left(e^{if}\right)\|_{M,N}\leq \prod_{j=1}^M (1+\|f\|_{C^j}+\|c\|_{j,0})^{2^{M-j}}\,\|c\|_{M,N}\prod_{j=1}^{M+N} (1+\|f\|_{C^j})+\\
& + \left(\frac{\pi^2}{3}-1\right) \prod_{j=1}^M (1+\|f\|_{C^j}+\|c\|_{j,0})^{2^{M-j}}\, \|f\|_{C^M} \prod_{j=1}^{M+N+2} (1+\|f\|_{C^j})\,.
\end{aligned}
\end{equation*}
Clearly those estimates establish the desired at most polynomial growth, finishing the proof.
\end{proof}

\section{Structure of Derivations}
\subsection{Basics}
Let $A$ be an algebra. A linear map $\delta:A\to A$ is called a {\it derivation} if the Leibniz rule holds:
\begin{equation*}
\delta(ab) = a\delta(b) + \delta(a)b
\end{equation*}
In this paper we consider three C$^*$-algebras: commutative C$^*$-algebra $C(\R/\Z)$, the algebra of compact operators $\mathcal{K}$ and the Toeplitz algebra $\mathcal{T}$. In each of those algebras we singled out a smooth subalgebra $C^\infty(\R/\Z)$, $\mathcal{K}^\infty$ and $\mathcal{T}^\infty$. The purpose of this section is to discuss derivations on all 6 algebras. Remarkably, there are significant differences in spaces of derivations on C$^*$-algebras and on their smooth subalgebras.

Theory of derivations on operator algebras is well-developed, see for example \cite{KR}, \cite{S}. It turns out that a derivation on a C$^*$-algebra is automatically continuous.  While in general this property does not hold for derivations on Frechet algebras, it however remains true for $C^\infty(\R/\Z)$ and $\mathcal{T}^\infty$, chiefly because they are finitely generated.

We say that a derivation $\delta$ of a C$^*$-algebra $A$ of operators on a Hilbert space $H$ is spatial when there is a bounded operator $b$ on $H$ such that $\delta (a)= ba - ab$, for each $a$ in $A$. If $b$ can be chosen in $A$, we say that $\delta$ is inner.
It turns out that all derivations of von Neumann algebras are inner. Moreover, each derivation of a C$^*$-algebra is spatial, induced by an operator in the weak closure of $A$ in $H$.
In particular, derivations of commutative C$^*$-algebras, including $C(\R/\Z)$, are 0. Furthermore, because $\mathcal{K}$ is an ideal in the algebra of bounded operators, any derivation on $\mathcal{K}$ is spatial and any bounded operator induces a derivation on $\mathcal{K}$.
The space of derivations on the Toeplitz algebra is more subtle and it coincides with the essential commutator algebra of the unilateral shift \cite{BH}.

A standard result in differential geometry (see \cite{NW}) is that for a smooth manifold $M$ derivations on $C^\infty(M)$ are precisely smooth vector fields on $M$.
In particular, any derivation on $C^\infty(\R/\Z)$ is of the form:
\begin{equation*}
F(x)\frac{1}{2\pi i}\frac{d}{dx}
\end{equation*}
and is automatically continuous.

It remains to describe derivations on $\mathcal{K}^\infty$ and on $\mathcal{T}^\infty$. This is done in the following sections.
\subsection{Derivations on $\mathcal{K}^\infty$}
For completeness we classify here all continuous derivations $\delta$ on $\mathcal{K}^\infty$. We follow the strategy used in our previous papers \cite{KMRSW1}, \cite{KMRSW2}, decomposing derivations into Fourier components which satisfy covariance properties and then classifying continuous covariant derivations. The result is not particularly elegant: continuous derivations on $\mathcal{K}^\infty$ are given by commutators with possibly unbounded operators whose matrix coefficients satisfy growth conditions detailed in Theorem \ref{compact_der_theo} below.

Given $n\in\Z$, a derivation $\delta:\mathcal{K}^\infty\to \mathcal{K}^\infty$ is said to be a {\it $n$-covariant derivation} if the relation 
\begin{equation*}
\rho_\theta^{-1}\delta\rho_\theta(a)= e^{-2\pi in\theta} \delta(a)
\end{equation*}
holds for all $\theta$, where $\rho_\theta$ was defined in \eqref{rho_def}.  When $n=0$ we say that the derivation is invariant.  With this definition, we point out that $\delta_\K:\mathcal{K}^\infty\to \mathcal{K}^\infty$ is an invariant continuous derivation.

\begin{defin}
If $\delta$ is a continuous derivation in $\mathcal{K}^\infty$, the {\it $n$-th Fourier component} of $\delta$ is defined as: 
\begin{equation*}
\delta_n(a)= \int_0^1 e^{2\pi in\theta} \rho_\theta^{-1}\delta\rho_\theta(a)\, d\theta\,.
\end{equation*}
\end{defin}
Below we describe properties of Fourier components of derivations and of $n$-covariant derivations in general.
\begin{prop}
Let $\delta:\mathcal{K}^\infty\to \mathcal{K}^\infty$ be a continuous derivation.  Then $\delta_n:\mathcal{K}^\infty\to \mathcal{K}^\infty$ is a continuous $n$-covariant derivation, where $\delta_n$ are the $n$-th Fourier components of $\delta$.
\end{prop}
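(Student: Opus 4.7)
The plan is to verify in order: (i) the integrand is a continuous $\mathcal{K}^\infty$-valued function of $\theta$, so the integral makes sense, (ii) $\delta_n$ is linear, (iii) $\delta_n$ satisfies the Leibniz rule, (iv) $\delta_n$ is $n$-covariant, and (v) $\delta_n$ is continuous.

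First I would establish that $\rho_\theta$ restricts to a continuous one-parameter group of automorphisms of the Fr\'echet algebra $\mathcal{K}^\infty$. Since $\rho_\theta(P_{k,l})=e^{2\pi i(k-l)\theta}P_{k,l}$, conjugation by $e^{2\pi i\K}$ multiplies matrix coefficients by a unit-modulus phase, and the Hilbert--Schmidt based formula \eqref{delta_matrix} shows that $\|\rho_\theta(a)\|_{M,N}=\|a\|_{M,N}$ for every $M,N$. Continuity in $\theta$ then reduces to showing $\|\rho_\theta(a)-a\|_{M,N}\to 0$ as $\theta\to 0$, which follows from the matrix coefficient expansion
\begin{equation*}
\|\rho_\theta(a)-a\|_{0,N}^2 \sim \sum_{k,l}(1+l)^{2N}|e^{2\pi i(k-l)\theta}-1|^2|a_{k,l}|^2
\end{equation*}
and dominated convergence, using that $\{(1+l)^{2N}|a_{k,l}|^2\}$ is summable by membership of $a$ in $\mathcal{K}^\infty$; an analogous estimate handles the higher seminorms via $\delta_\K^j$. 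Combining with continuity of $\delta$, the map $\theta\mapsto e^{2\pi in\theta}\rho_\theta^{-1}\delta\rho_\theta(a)$ is continuous into $\mathcal{K}^\infty$, so the Riemann integral converges in the Fr\'echet topology by completeness (Proposition~\ref{MN-norm_basics}(7)), defining $\delta_n(a)\in\mathcal{K}^\infty$.

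Linearity is immediate from linearity of $\delta$, of $\rho_\theta$, and of the integral. For the Leibniz rule, note that for each fixed $\theta$ the conjugate $\rho_\theta^{-1}\delta\rho_\theta$ is a derivation on $\mathcal{K}^\infty$, hence
\begin{equation*}
\delta_n(ab)=\int_0^1 e^{2\pi in\theta}\bigl[\rho_\theta^{-1}\delta\rho_\theta(a)\bigr]\,b\,d\theta + \int_0^1 e^{2\pi in\theta}\,a\,\bigl[\rho_\theta^{-1}\delta\rho_\theta(b)\bigr]\,d\theta.
\end{equation*}
Because left and right multiplication by a fixed element of $\mathcal{K}^\infty$ is a continuous operation on $\mathcal{K}^\infty$ (Proposition~\ref{MN-norm_basics}(4)), they commute with the Fr\'echet-valued integral, yielding $\delta_n(ab)=\delta_n(a)b+a\delta_n(b)$.

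For $n$-covariance I would compute
\begin{equation*}
\rho_\phi^{-1}\delta_n\rho_\phi(a)=\int_0^1 e^{2\pi in\theta}\rho_{\theta+\phi}^{-1}\delta\rho_{\theta+\phi}(a)\,d\theta
\end{equation*}
using that $\rho_\phi$ is continuous on $\mathcal{K}^\infty$ and hence commutes with the integral, then substitute $\theta'=\theta+\phi$ and invoke $1$-periodicity of the integrand to obtain the factor $e^{-2\pi in\phi}$ out front. Finally, continuity of $\delta_n$ follows from the standard estimate $\|\delta_n(a)\|_{M,N}\le\sup_{\theta}\|\rho_\theta^{-1}\delta\rho_\theta(a)\|_{M,N}$, the invariance of the seminorms under $\rho_\theta$, and continuity of $\delta$. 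The only genuinely technical step is the continuity of $\theta\mapsto\rho_\theta$ on $\mathcal{K}^\infty$ in each seminorm; everything else is formal manipulation once the integral is known to live in $\mathcal{K}^\infty$.
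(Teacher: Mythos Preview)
Your proposal is correct and follows essentially the same route as the paper: well-definedness via completeness of $\mathcal{K}^\infty$ and continuity of the integrand, the Leibniz rule from the fact that each $\rho_\theta^{-1}\delta\rho_\theta$ is a derivation, $n$-covariance by the change of variables $\theta'=\theta+\phi$ and periodicity, and continuity of $\delta_n$ from the uniform-in-$\theta$ bound. The paper's own proof is terse (it calls the derivation property ``straightforward'' and defers the continuity estimate to the proof of the subsequent lemma), whereas you supply the details the paper omits --- in particular the isometry $\|\rho_\theta(a)\|_{M,N}=\|a\|_{M,N}$ and the dominated-convergence argument for strong continuity of $\theta\mapsto\rho_\theta$ on $\mathcal{K}^\infty$ --- but there is no substantive difference in strategy.
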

\begin{proof}
It is straightforward to see that $\delta_n$ is a derivation and is well-defined on $\mathcal{K}^\infty$. 
Since $\delta:\mathcal{K}^\infty\to \mathcal{K}^\infty$, $\rho_\theta:\mathcal{K}^\infty\to\mathcal{K}^\infty$ and $\mathcal{K}^\infty$ is complete, it follows that $\delta_n(a)\in \mathcal{K}^\infty$ for all $a\in\mathcal{K}^\infty$. Since $\delta$ is a continuous derivation and the automorphism $\rho_\theta$ is continuous, it follows that $\delta_n$ is also continuous, see also the proof of the lemma below.

The following computation verifies that $\delta_n$ is $n$-covariant:
\begin{equation*}
\rho_\theta^{-1}\delta_n\rho_\theta(a) = \int_0^1 e^{2\pi in\varphi} \rho_\theta^{-1}\rho_\varphi^{-1}\delta\rho_\varphi\rho_\theta(a)\, d\varphi = \int_0^1 e^{2\pi in\varphi}\rho_{\theta + \varphi}^{-1}\delta\rho_{\theta + \varphi}(a)\, d\varphi\,.
\end{equation*}
Changing to new variable $\theta + \varphi$, and using the translation invariance of the measure, it now follows that $\rho_\theta^{-1}\delta_n\rho_\theta(a)= e^{-2\pi in\theta} \delta_n(a)$.    
\end{proof}
We will also need the following growth estimate on Fourier components of derivations.
\begin{lem}\label{est_delta_n}
Let $\delta:\mathcal{K}^\infty\to\mathcal{K}^\infty$ be a continuous derivation.  Then, for every $k\ge0$, $M\ge0$ and $N\ge0$, there exist $M'\ge0$ and $N'\ge0$ and a constant $C_k=C_k(M,N)$ such that for every $a\in \mathcal{K}^\infty$ we have:
\begin{equation*}
n^k\|\delta_n(a)\|_{M,N} \le C_k\|a\|_{M',N'}\,,
\end{equation*}
where $\delta_n$ are the $n$-th Fourier components of $\delta$.
\end{lem}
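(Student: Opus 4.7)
The proof combines two ingredients: the rotation automorphism $\rho_\theta$ acts isometrically on each seminorm $\|\cdot\|_{M,N}$, and integration by parts in the defining integral of $\delta_n$ produces arbitrarily many factors of $n^{-1}$ without leaving the class of continuous derivations.

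\emph{Step 1 (Invariance of the seminorms under $\rho_\theta$).} Since $\rho_\theta(a)=e^{2\pi i\K\theta}ae^{-2\pi i\K\theta}$ is conjugation by a diagonal unitary that commutes with $\K$, one checks immediately that $\rho_\theta\circ\delta_\K=\delta_\K\circ\rho_\theta$ and
\begin{equation*}
\|\rho_\theta(a)(I+\K)^N\|=\|a(I+\K)^N\|.
\end{equation*}
Combining these gives $\|\delta_\K^j(\rho_\theta(a))\|_{0,N}=\|\delta_\K^j(a)\|_{0,N}$, and hence $\|\rho_\theta(a)\|_{M,N}=\|a\|_{M,N}$ for every $M,N$ and every $\theta$.

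\emph{Step 2 (Integration by parts).} Set $f(\theta)=\rho_\theta^{-1}\delta\rho_\theta(a)$, a smooth $\mathcal{K}^\infty$-valued function of $\theta$ (smoothness follows from the explicit form $\rho_\theta(P_{k,l})=e^{2\pi i(k-l)\theta}P_{k,l}$ together with rapid decay). Using Step~1 and the chain rule, a direct calculation yields
\begin{equation*}
f'(\theta)=-2\pi i\,\rho_\theta^{-1}[\delta_\K,\delta]\rho_\theta(a),
\end{equation*}
and inductively $f^{(k)}(\theta)=(-2\pi i)^k\rho_\theta^{-1}D_k\rho_\theta(a)$, where
\begin{equation*}
D_k:=\underbrace{[\delta_\K,[\delta_\K,\cdots[\delta_\K}_{k\text{ brackets}},\delta]\cdots]]
\end{equation*}
is the $k$-fold iterated commutator. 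Since $\delta_\K$ is a continuous derivation of $\mathcal{K}^\infty$ and $\delta$ is continuous by assumption, each $D_k$ is again a continuous derivation of $\mathcal{K}^\infty$. The boundary terms in integration by parts vanish by $1$-periodicity of $\theta\mapsto\rho_\theta$, so for $n\neq0$ one obtains
\begin{equation*}
\delta_n(a)=\int_0^1 e^{2\pi in\theta}f(\theta)\,d\theta=\frac{1}{n^k}\int_0^1 e^{2\pi in\theta}\,\rho_\theta^{-1}D_k\rho_\theta(a)\,d\theta,
\end{equation*}
after collecting the constants $\tfrac{(-1)^k(-2\pi i)^k}{(2\pi in)^k}=n^{-k}$. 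The case $n=0$, $k\geq 1$ is trivial, and $k=0$ is just continuity of $\delta_0$.

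\emph{Step 3 (Conclusion).} Passing the $\|\cdot\|_{M,N}$ seminorm inside the integral and applying Step~1 gives
\begin{equation*}
n^k\|\delta_n(a)\|_{M,N}\leq\int_0^1\|\rho_\theta^{-1}D_k\rho_\theta(a)\|_{M,N}\,d\theta=\int_0^1\|D_k\rho_\theta(a)\|_{M,N}\,d\theta.
\end{equation*}
By continuity of the derivation $D_k$ on the Fr\'echet space $\mathcal{K}^\infty$, there exist $M',N'$ and a constant $C_k=C_k(M,N)$ such that $\|D_k(b)\|_{M,N}\leq C_k\|b\|_{M',N'}$ for all $b\in\mathcal{K}^\infty$. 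Applying this to $b=\rho_\theta(a)$ and using Step~1 once more yields the required estimate $n^k\|\delta_n(a)\|_{M,N}\leq C_k\|a\|_{M',N'}$.

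The main technical point is the justification of integration by parts, i.e.\ showing that $\theta\mapsto\rho_\theta(a)$ is smooth as a function into the Fr\'echet space $\mathcal{K}^\infty$; everything else is bookkeeping. This is however routine once one expands $a$ in the basis $\{P_{k,l}\}$, since each $\rho_\theta(P_{k,l})$ is a pure exponential in $\theta$ and the series converges uniformly in $\theta$ in all seminorms because the coefficients of $a$ are rapidly decaying.
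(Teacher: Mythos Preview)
Your proof is correct and follows essentially the same approach as the paper's: both integrate by parts in the Fourier integral to trade a factor of $n$ for the commutator $[\delta_\K,\delta]$, which remains a continuous derivation, and then iterate. Your argument is slightly more explicit in verifying that $\rho_\theta$ acts isometrically on each $\|\cdot\|_{M,N}$ (the paper simply invokes continuity of $\rho_\theta$) and in writing the iterated commutator $D_k$ in closed form rather than phrasing the iteration as an induction, but these are cosmetic differences.
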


\begin{proof}
Since $\delta$ is a continuous linear map on a Frechet space $\mathcal{K}^\infty$, there exists a constant $C=C(M,N)$ and an $M'\ge0$ and $N'\ge0$ such that for every $a\in \mathcal{K}^\infty$:
\begin{equation*}
\|\delta(a)\|_{M,N} \le C\|a\|_{M',N'}\,.
\end{equation*}
Using this and the fact that $\rho_\theta$ is continuous, we have
\begin{equation}\label{der_comp_cont}
\|\delta_n(a)\|_{M,N}= \left\|\int_0^1e^{2\pi in\theta}\rho_\theta^{-1}\delta\rho_\theta(a)\,d\theta\right\|_{M,N}\le \int_0^1\|\rho_\theta^{-1}\delta\rho_\theta(a)\|_{M,N}\,d\theta \le C\|a\|_{M',N'}\,.
\end{equation}
Consider the following calculation, using integration by parts, the continuity of $[\delta_\K,\delta]$, and the continuous differentiability of $\theta\mapsto \rho_\theta^{-1}\delta\rho_\theta$:
\begin{equation*}
\begin{aligned}
2\pi in\delta_n(a) &= \int_0^1 2\pi ine^{2\pi in\theta}\rho_\theta^{-1}\delta\rho_\theta(a)\,d\theta = \int_0^1\frac{d}{d\theta}\left(e^{2\pi in\theta}\right)\rho_\theta^{-1}\delta\rho_\theta(a)\,d\theta \\
&=-\int_0^1e^{2\pi in\theta}\frac{d}{d\theta}\left(\rho_\theta^{-1}\delta\rho_\theta(a)\right)\,d\theta=-2\pi i\int_0^1\rho_\theta^{-1}[\delta_\K,\delta]\rho_\theta(a)\,d\theta \\
&=-2\pi i\left([\delta_\K,\delta]\right)_n(a)\,,
\end{aligned}
\end{equation*}
where $\left([\delta_\K,\delta]\right)_n$ is the $n$-th Fourier component of the derivation $[\delta_\K,\delta]$. Consequently, using estimate \eqref{der_comp_cont} for the commutator $[\delta_\K,\delta]$ we obtain:
\begin{equation*}
n\|\delta_n(a)\|_{M,N} \le C_k\|a\|_{M',N'}\,,
\end{equation*}
for some $M'$, $N'$ and a constant $C$.
The result now easily follows by induction on $k$.
\end{proof}

The usual Ces\`aro mean convergence result for Fourier components in harmonic analysis \cite{K} implies that if $\delta$ is a continuous derivation onn $\mathcal{K}$ then
\begin{equation*}
\delta(a)=\lim_{L\rightarrow\infty} \frac{1}{L+1} \sum_{j=0}^L \left(\sum_{n=-j}^j \delta_n(a)\right)\,,
\end{equation*}
for every $a\in\mathcal{K}^\infty$.
In particular, $\delta$ is completely determined by its Fourier components $\delta_n$.

\begin{prop}\label{Fourier_series_norm_conv}
Let $\delta: \mathcal{K}^\infty\to \mathcal{K}^\infty$ be a continuous derivation, then
\begin{equation}\label{der_series}
\delta(a) = \sum_{n\in\Z} \delta_n(a)
\end{equation}
for every $a\in \mathcal{K}^\infty$ where $\delta_n$ are the $n$-th Fourier components of $\delta$.  Here the sum is norm convergent.
\end{prop}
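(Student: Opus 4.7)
The plan is to establish absolute convergence of the series $\sum_{n\in\Z}\delta_n(a)$ in every Fr\'echet seminorm of $\mathcal{K}^\infty$, and then identify the limit with $\delta(a)$ by invoking the Ces\`aro mean convergence already noted in the excerpt.

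First, fix $a\in\mathcal{K}^\infty$ and seminorm indices $M,N\geq 0$. Apply Lemma \ref{est_delta_n} with $k=2$: there exist $M', N'\geq 0$ and a constant $C=C(M,N)$ such that
\begin{equation*}
\|\delta_n(a)\|_{M,N} \leq \frac{C\,\|a\|_{M',N'}}{n^2} \quad \textrm{for all } n\neq 0.
\end{equation*}
Together with the bound $\|\delta_0(a)\|_{M,N}\leq C_0\|a\|_{M',N'}$ from the $k=0$ case of the same lemma, this gives
\begin{equation*}
\sum_{n\in\Z}\|\delta_n(a)\|_{M,N} \leq \left(C_0 + 2C\sum_{n\geq 1}\frac{1}{n^2}\right)\|a\|_{M',N'} < \infty,
\end{equation*}
so the series is absolutely convergent in the seminorm $\|\cdot\|_{M,N}$. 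Since this holds for every $M,N$, and $\mathcal{K}^\infty$ is complete as a topological vector space by Proposition \ref{MN-norm_basics}(7), the partial sums $\sum_{n=-J}^{J}\delta_n(a)$ form a Cauchy net and converge to some $S(a)\in\mathcal{K}^\infty$.

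Second, identify $S(a)$ with $\delta(a)$. The excerpt recalls that the standard harmonic analysis argument yields
\begin{equation*}
\delta(a)=\lim_{L\to\infty} \frac{1}{L+1}\sum_{j=0}^{L}\left(\sum_{n=-j}^{j}\delta_n(a)\right).
\end{equation*}
However, whenever a sequence in a Fr\'echet space converges, its Ces\`aro means converge to the same limit. Since the symmetric partial sums $\sum_{n=-j}^{j}\delta_n(a)$ converge to $S(a)$ in the Fr\'echet topology by the previous step, their Ces\`aro averages likewise converge to $S(a)$. Comparing the two limits gives $S(a)=\delta(a)$, which is exactly the claimed identity \eqref{der_series}.

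The only real obstacle is the decay estimate on Fourier components, but this is already supplied by Lemma \ref{est_delta_n}, which gives decay of arbitrary polynomial order; any order $\geq 2$ is more than enough for summability. The rest is a routine interplay between absolute convergence and Ces\`aro summability in a complete locally convex space.
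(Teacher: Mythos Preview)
Your proof is correct and follows essentially the same approach as the paper: both use Lemma \ref{est_delta_n} to get rapid decay of $\|\delta_n(a)\|_{M,N}$ and hence absolute convergence of the series in every Fr\'echet seminorm, then identify the sum with $\delta(a)$ via the Ces\`aro observation stated just before the proposition. The only cosmetic difference is in the identification step: the paper phrases it as ``the right-hand side is a continuous derivation with the same Fourier components as $\delta$, hence they agree,'' while you compare Ces\`aro limits directly; both rest on the same Ces\`aro fact.
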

\begin{proof}
Lemma \ref{est_delta_n} implies that $\{\|\delta_n(a)\|\}$ is a RD sequence for every $a\in\mathcal{K}^\infty$ and thus the series \eqref{der_series} is norm convergent. Also, the right-hand side of \eqref{der_series} is a continuous derivation on $\mathcal{K}^\infty$ with the same Fourier components $\delta_n$ as $\delta$ so they must be equal.

\end{proof}  

\begin{prop}\label{n_covariant_der_smooth_compact}
Let $\delta:\mathcal{K}^\infty\to \mathcal{K}^\infty$ be a $n$-covariant continuous derivation. Then there exists a polynomially bounded sequence $\{\beta_j\}$ such that $\delta(a)$ is given by the formal sum:
\begin{equation}\label{CovCompact}
\delta(a) = \left\{
\begin{aligned}
&\left[\sum_{j=0}^\infty \beta_jP_{j,j+n},a\right] &&\textrm{if }n\ge0\\
&\left[\sum_{j=0}^\infty \beta_jP_{j-n,j},a\right] &&\textrm{if }n<0
\end{aligned}\right.
\end{equation}
for all $a\in\mathcal{K}^\infty$. Conversely, a polynomially bounded sequence $\{\beta_j\}$ uniquely determines a $n$-covariant continuous derivation on $\mathcal{K}^\infty$ via the equation \eqref{CovCompact}, with the exception of $n=0$, when the sequence $\{\beta_j\}$ is determined up to an additive constant.
\end{prop}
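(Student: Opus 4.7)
The plan is to reconstruct $\delta$ from its action on the matrix units $P_{k,l}$, exploiting $n$-covariance to sharply restrict where each $\delta(P_{k,l})$ can live, then reading off the sequence $\{\beta_j\}$ from $\delta(P_{k,k})$.

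First, I would observe that $\rho_\theta(P_{r,s})=e^{2\pi i(r-s)\theta}P_{r,s}$, so the $n$-covariance relation applied to $a=P_{k,l}$ forces $\rho_\theta(\delta(P_{k,l}))=e^{2\pi i(k-l+n)\theta}\delta(P_{k,l})$. Hence $\delta(P_{k,l})$ must be a (norm-convergent) combination of matrix units $P_{r,s}$ with $r-s=k-l+n$ (when this is nonnegative the elementary units $P_{r,r-(k-l+n)}$ exist; otherwise the analogous statement holds). Now applying $\delta$ to the relation $P_{k,k}^2=P_{k,k}$ together with $P_{k,k}P_{m,m}=0$ for $k\ne m$ reduces the possible terms in $\delta(P_{k,k})$ to just two matrix units; a short calculation yields
\begin{equation*}
\delta(P_{k,k})=\beta_k P_{k+n,k}-\beta_{k-n}P_{k,k-n}
\end{equation*}
for $n\ge 0$ and a uniquely defined sequence $\{\beta_k\}$ (with the convention $\beta_j=0$ for $j<0$); the $n<0$ case is handled symmetrically. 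When $n=0$ the same analysis gives $\delta(P_{k,k})=(\beta_k-\beta_k)P_{k,k}=0$, but only the differences $\beta_k-\beta_{k'}$ are determined, which is exactly the additive-constant ambiguity mentioned in the statement.

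Next I would extend this to all matrix units. Using $P_{k,l}=P_{k,k}P_{k,l}=P_{k,l}P_{l,l}$, the Leibniz rule combined with the frequency restriction on $\delta(P_{k,l})$ reduces the a priori many possible terms to exactly two, and the resulting coefficients are forced to coincide with $\beta_k$ and $\beta_{l-n}$, i.e.\ one verifies directly that $\delta(P_{k,l})=[B,P_{k,l}]$ for the formal operator $B=\sum_{j\ge 0}\beta_j P_{j+n,j}$ (after an obvious re-indexing to match the statement of the proposition). Continuity of $\delta$ and the expansion of a general $a\in\mathcal{K}^\infty$ as $\sum_{k,l}a_{k,l}P_{k,l}$ then promote this termwise formula to the claim $\delta(a)=[B,a]$ for all $a\in\mathcal{K}^\infty$, where the sum defining $[B,a]$ converges in every $\|\cdot\|_{M,N}$.

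The main obstacle, and the step where I would invest the most care, is showing that $\{\beta_j\}$ is polynomially bounded. Here I would invoke the continuity of $\delta$ together with Lemma \ref{est_delta_n} (or simply direct continuity of $\delta$ itself, since $\delta$ is assumed continuous) applied to suitable test elements. Concretely, $\|\delta(P_{k,k})\|_{M,N}\le C\|P_{k,k}\|_{M',N'}$ for some $M',N'$ depending on $M,N$, and $\|P_{k,k}\|_{M',N'}=(1+k)^{N'}$, while on the other hand $\|\delta(P_{k,k})\|_{M,N}$ dominates $|\beta_k|$ times a factor polynomial in $k$ arising from $\|P_{k+n,k}\|_{M,N}$. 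Comparing the two estimates yields $|\beta_k|\le C'(1+k)^{p}$ for some $p$ and $C'$, which is the desired polynomial bound.

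For the converse I would simply take a polynomially bounded sequence $\{\beta_j\}$, define $B$ as the formal diagonal-shifted sum, and verify that $[B,\cdot]$ maps $\mathcal{K}^\infty$ continuously into itself: the rapid decay of the matrix coefficients of $a\in\mathcal{K}^\infty$ beats the polynomial growth of $\{\beta_j\}$, so termwise multiplication and subtraction yields an element of $\mathcal{K}^\infty$, and inspection of the resulting matrix coefficients against the seminorm formula \eqref{delta_matrix} shows continuity in every $\|\cdot\|_{M,N}$. The Leibniz rule and $n$-covariance are then immediate from the fact that $B$ has pure frequency $n$.
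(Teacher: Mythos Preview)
Your proposal is correct and follows essentially the same strategy as the paper: use $n$-covariance to restrict $\delta(P_{k,l})$ to a single frequency, exploit the multiplicative relations among matrix units together with the Leibniz rule to extract the sequence $\{\beta_j\}$, and then use continuity of $\delta$ for polynomial boundedness. Two minor tactical differences are worth noting: the paper splits into $n=0$ and $n>0$ and for $n>0$ works directly with the full relation $P_{k,l}P_{i,j}=\chi_{l,i}P_{k,j}$ rather than first isolating $\delta(P_{k,k})$ as you do; and for polynomial boundedness the paper tests $\delta$ against $a=\sum_l a_{0,l}P_{0,l}$ and argues by contradiction, whereas your direct estimate $|\beta_k|\le\|\delta(P_{k,k})\|\le C\|P_{k,k}\|_{M',N'}=C(1+k)^{N'}$ is shorter.
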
 
Here polynomially bounded means that there exist an $N\ge0$ and a constant $C\ge0$ so that for every $j$ we have:
\begin{equation*}
|\beta_j|\le C(1+j)^N\,.
\end{equation*}
\begin{proof}
We prove the decomposition formula and the growth condition estimate for $n\ge0$ as the case $n<0$ is very similar.   We first proceed with $n=0$ case and then consider $n>0$.  

Let $n=0$ and let $\mathcal{K}_{\textrm{diag}}^\infty$ be the fixed point algebra of $\rho_\theta$ in $\mathcal{K}^\infty$. That is, $a\in\mathcal{K}_{\textrm{diag}}^\infty$ if and only if $a\in\mathcal{K}^\infty$ and $\rho_\theta(a) = a$.  
Applying the automorphism $\rho_\theta$ to $P_{k,l}$ we have
\begin{equation*}
\rho_\theta(P_{k,l}) = e^{2\pi i(l-k)\theta}P_{k,l}\,.
\end{equation*}
It easily follows from the above formula that $\mathcal{K}_{\textrm{diag}}^\infty$ consists of diagonal operators in $\mathcal{K}^\infty$:
\begin{equation*}
\mathcal{K}_{\textrm{diag}}^\infty = \left\{a=\sum_{k\ge0}a_{k,k}P_{k,k}: \{a_{k,k}\}\textrm{ is a RD sequence}\right\}\,.
\end{equation*}
Notice that if $\delta$ is an invariant derivation, then it follows that $\delta:\mathcal{K}_{\textrm{diag}}^\infty\to\mathcal{K}_{\textrm{diag}}^\infty$.  
But $\mathcal{K}_{\textrm{diag}}^\infty$ is generated by projections $P_{k,k}$ and we have:
\begin{equation*}
\delta(P_{k,k})=\delta\left(P_{k,k}^2\right)=2P_{k,k}\delta(P_{k,k}),
\end{equation*}
so that $(1-2P_{k,k})\delta(P_{k,k})=0$, implying that $\delta(P_{k,k})=0$.
Therefore, by continuity, we have that $\delta(a)=0$ for all $a\in\mathcal{K}_{\textrm{diag}}^\infty$.  

Notice that
\begin{equation*}
P_{kk}P_{k,l} = P_{k,l}\quad\textrm{and}\quad P_{k,l}P_{l,l}=P_{k,l}\,.
\end{equation*}
Thus, applying $\delta$ to these relations and using the Leibniz rule we have
\begin{equation*}
P_{kk}\delta(P_{k,l}) = \delta(P_{k,l})\quad\textrm{and}\quad\delta(P_{k,l})P_{l,l} = \delta(P_{k,l}) 
\end{equation*}
since $P_{k,k}$ and $P_{l,l}$ belong to $\mathcal{K}_{\textrm{diag}}^\infty$.  Thus, there exists a sequence $\{b_{k,l}\}$ such that
\begin{equation*}
\delta(P_{k,l}) = b_{kl}P_{k,l}
\end{equation*}
with $b_{k,k}=0$. By continuity, $\delta$ is completely determined by the sequence $\{b_{k,l}\}$. To proceed further we use the relation
\begin{equation*}
P_{k,l}P_{i,j} = \chi_{l,i}P_{k,j}
\end{equation*}
with $\chi_{l,i} =1$ for $l=i$ and is equal to $0$ otherwise.  Applying $\delta$ to this relation and using the Leibniz rule again we have
\begin{equation*}
b_{k,l}P_{k,l}P_{i,j} + b_{i,j}P_{k,l}P_{i,j} = b_{k,j}\chi_{l,i}P_{k,j}\,.
\end{equation*}
Therefore, we can conclude that
\begin{equation*}
(b_{k,l} + b_{i,j})\chi_{l,i}P_{k,j} = b_{kj}\chi_{l,i}P_{k,j}\,.
\end{equation*}
So, if $l=i$ we have that
\begin{equation*}
b_{k,l} = b_{k,j}-b_{l,j}\quad\textrm{for all }j\,,
\end{equation*}
and hence there exists a sequence $\{\beta_k\}$, for example $\{\beta_k\}=b_{k,0}$, such that
\begin{equation}\label{inv_beta}
\delta(P_{k,l}) = (\beta_k-\beta_l)P_{k,l}\,.
\end{equation}
Consequently we have formally that
\begin{equation*}
\delta(a) = \left[\sum_{k\ge0}\beta_kP_{k,k},a\right]\,.
\end{equation*}
Clearly, by \eqref{inv_beta}, the sequence $\beta_j$ is determined up to additive constant, while polynomial boundedness of $\{\beta_k\}$ is discussed below.

Next suppose that $n>0$.   Then the covariance condition on $\delta$ implies that we have
\begin{equation*}
\rho_\theta(\delta(P_{k,l})) = e^{2\pi i(l-k+n)\theta}\delta(P_{k,l})\,,
\end{equation*}
so it follows that $\delta(P_{k,l})$ must have the following general form:
\begin{equation}\label{gen_n_der_proj}
\delta(P_{k,l}) = \sum_{s-r=l-k+n}b_{r,s}(k,l)P_{r,s}\,,
\end{equation}
where the summation above is over those $r\geq0$, $s\geq0$ such that $s-r=l-k+n$.
We want to analyze the coefficients $b_{r,s}(k,l)$.

Applying $\delta$ to the the relation:
\begin{equation*}
P_{k,l}P_{i,j} = \chi_{l,i}P_{k,j}
\end{equation*}
and using the Leibniz rule again we have
\begin{equation*}
\delta(P_{k,l})P_{i,j} + P_{k,l}\delta(P_{i,j}) = \chi_{l,i}\delta(P_{k,j})\,.
\end{equation*}
Substituting in equation \eqref{gen_n_der_proj} into the above equation yields the following:
\begin{equation*}
\sum_{s-r=l-k+n}b_{r,s}(k,l)P_{r,s}P_{i,j} + \sum_{s-r=l-k+n}b_{r,s}(k,l)P_{k,l}P_{r,s} = \chi_{l,i}\delta(P_{k,j})
\end{equation*}
We will use a useful convention that if one of the subscripts $r,s$ is negative then we set $b_{r,s}(k,l)=0$.
With this convention, the above equation reduces to
\begin{equation}\label{gen_seq_eq}
b_{i-l+k-n,i}(k,l)P_{i-l+k-n,j} + b_{l,l+j-i+n}(i,j)P_{k,l+j-i+n} = \chi_{l,i}\delta(P_{k,j})\,.
\end{equation}
There are several cases to consider. 

 First, if $l\neq i$, then the right-hand side of \eqref{gen_seq_eq} is zero. If additionally $i-l-n=0$, then equation \eqref{gen_seq_eq} reduces to
\begin{equation*}
(b_{k,l+n}(k,l)+b_{l,j}(l+n,j))P_{k,j}=0\quad\textrm{for all }j,\, k\,\textrm{ and }l\,.
\end{equation*}
In other words,
\begin{equation*}
b_{k,l+n}(k,l)= - b_{l,j}(l+n,j)\quad\textrm{for all }j,\, k\,\textrm{ and }l\,.
\end{equation*}
Notice that the left-hand side of the above equation does not depend on $j$ while the right-hand side does not depend on $k$.
It now follows that if we define $\{\beta_l\}$ as 
\begin{equation*}
\beta_l=b_{l,0}(l+n,0),
\end{equation*}
then we have that
\begin{equation*}
\beta_l=b_{l,0}(l+n,0)=b_{l,j}(l+n,j)=-b_{k,l+n}(k,l)\quad\textrm{for all }j,\, k\,\textrm{ and }l\,.
\end{equation*}

Next, notice that if $l=i$ we arrive at
\begin{equation*}
\delta(P_{k,j}) = b_{k-n,l}(k,l)P_{k-n,j} + b_{l,j+n}(l,j)P_{k,j+n}\,.
\end{equation*}
Using $\{\beta_l\}$, this can be re-written as:
\begin{equation*}
\delta(P_{k,j}) = \beta_{k-n}P_{k-n,j} - \beta_jP_{k,j+n}\,,
\end{equation*}
where, as before, $\beta$ with a negative subscript is defined to be equal to zero.
Consequently, $\beta_l$ is determined uniquely by $\delta$ and we have the following formal sum expression:
\begin{equation*}
\delta(P_{k,j}) = \left[\sum_{l\ge0}\beta_lP_{l,l+n},a\right],
\end{equation*}
which, by continuity of $\delta$, gives \eqref{CovCompact} for all $a\in\mathcal{K}^\infty$.

It remains to prove the growth condition on $\{\beta_j\}$. To this end we study $\delta(a)$ for a special $a\in\mathcal{K}^\infty$,  namely we consider the following:
\begin{equation*}
\delta\left(\sum_{l\ge0}a_{0,l}P_{0,l}\right) = -\sum_{l\ge0}a_{0,l}\beta_lP_{0,l+n}\,.
\end{equation*}
The right-hand side of the above equation must be rapid decay for every RD sequence $\{a_{0,l}\}$.  It therefore follows that $\{\beta_l\}$ is polynomially bounded. Indeed, assuming otherwise, pick an increasing sequence $l_N$ such that 
\begin{equation*}
|\beta_{l_N}|\geq(1+l_N)^N.
\end{equation*}
Then the sequence
\begin{equation*}
a_{0,l} = \left\{
\begin{aligned}
&\frac{1}{\beta_{l_N}} &&\textrm{if }l=l_N\\
&0 &&\textrm{otherwise,}
\end{aligned}\right.
\end{equation*}
is RD  but the product $a_{0,l}\beta_l$ is not.

Finally, because matrix coefficients of $a$ are RD while $\{\beta_l\}$ is at most polynomially increasing, it is clear that the formal sums in \eqref{CovCompact} lead to norm convergent expressions for the values of the derivations $\delta(a)$.
\end{proof}

We are now in a position to classify continuous derivations on $\mathcal{K}^\infty$.

\begin{theo}\label{compact_der_theo}
Let $\delta:\mathcal{K}^\infty\to\mathcal{K}^\infty$ be a continuous derivation, there exists a  sequence $\{\beta_{n,j}\}$ satisfying the following growth condition: for every $r>0$ there is $p>0$ and a constant satisfying
\begin{equation*}
|\beta_{n,l}|\le  \frac{\textrm{const}(1+l)^r}{(1+n)^p}
\end{equation*}
such that $\delta(a)$ is given by the formal sum:  
\begin{equation}\label{delta_compact}
\delta(a)=\left[\sum_{n\ge0}\sum_{j=0}^\infty \beta_{n,j}P_{j,j+n} + \sum_{n<0}\sum_{j=0}^\infty \beta_{n,j}P_{j-n,j},a\right]
\end{equation}
Conversely, if $\{\beta_{n,j}\}$ is a sequence satisfying the above growth condition, then \eqref{delta_compact}
defines a continuous derivation on $\mathcal{K}^\infty$.
\end{theo}
\begin{proof}
Let $\delta:\mathcal{K}^\infty\to\mathcal{K}^\infty$ be a continuous derivation.
The decomposition
\begin{equation*}
\delta(a) = \sum_{n\in\Z}\delta_n(a)
\end{equation*}
follows from Proposition \ref{Fourier_series_norm_conv}.  Since $\delta_n$  are $n$-covariant, it follows from Proposition \ref{n_covariant_der_smooth_compact} that there are sequences $\{\beta_{n,j}\}$ such that $\delta_n(a)$ are given by formulas \eqref{CovCompact}. This establishes equation \eqref{delta_compact} above.

To show the growth conditions of $\{\beta_{n,j}\}$ we apply $\delta_n$ to $P_{0,l}$.  This gives:
\begin{equation*}
\delta_n(P_{0,l}) = \left[\sum_{n\ge0}\beta_{n,j}P_{j,j+n},P_{0,l}\right] = -\beta_{n,l}P_{0,l+n}\,.
\end{equation*}
Using the definition of the $M,N$-norm and equation \eqref{der_k_on_proj}, a straightforward calculation yields
\begin{equation*}
\|P_{0,l}\|_{M,N} = (1+l)^{M+N}\,.
\end{equation*}
Moreover, by Lemma \ref{est_delta_n}, we have for every $M\ge0$, $N\ge0$ and $i\ge0$, there exist $M'\ge0$, $N'\ge0$ and constants $C$ such that
\begin{equation*}
n^i\|\delta_n(a)\|_{M,N}\le C\|a\|_{M',N'}
\end{equation*}
Applying the above inequality to $a=P_{0,l}$ gives
\begin{equation*}
n^i|\beta_{n,l}|\|P_{0,l+n}\|_{M,N}\le C \|P_{0,l}\|_{M',N'}\,.
\end{equation*} 
This in turn yields
\begin{equation*}
|\beta_{n,l}|\le \frac{C(1+l)^{M'+N'}}{n^i(1+l+n)^{M+N}}\le \frac{\textrm{const}(1+l)^r}{(1+n)^p}\quad\textrm{for some }p,r>0\,.
\end{equation*}

Finally, all that remains is to show that if a sequence $\{\beta_{n,j}\}$ satisfies the above growth conditions then the formula
\begin{equation*}
\left[\sum_{n\ge0}\sum_{j=0}^\infty \beta_{n,j}P_{j,j+n} + \sum_{n<0}\sum_{j=0}^\infty \beta_{n,j}P_{j-n,j},a\right]
\end{equation*}
defines a continuous derivation on $\mathcal{K}^\infty$.  We only show the calculation for $n\ge0$ terms as the case $n<0$ is completely analogous. 
Write $a\in\mathcal{K}^\infty$ as $a = \sum_{k,l\ge0} a_{kl}P_{k,l}$
and consider the following calculation:
\begin{equation*}
\begin{aligned}
&\left[\sum_{n\ge0}\sum_{j=0}^\infty \beta_{n,j}P_{j,j+n},a\right] = \sum_{j,k,l,n} \beta_{n,j}a_{k,l}[P_{j,j+n},P_{k,l}]
= \sum_{j,l\ge0}\left(\sum_{n\ge0}\beta_{n,j}a_{j+n,l}\right)P_{j,l}\, - \\
&- \sum_{k,l\ge0}\left(\sum_{l\ge n\ge0}a_{k,l-n}\beta_{n,l-n}\right)P_{k,l}
:= \sum_{j,l\ge0} b_{j,l}P_{j,l} + \sum_{k,l\ge0}c_{k,l}P_{k,l}\,.
\end{aligned}
\end{equation*}
after resummation and computing the commutator.  To finish the proof, we need to show that the coefficients $\{b_{j,l}\}$ and $\{c_{k,l}\}$ are RD sequences.  We first analyze $\{c_{k,l}\}$.  We estimate:
\begin{equation*}
|c_{k,l}|\le \sum_{n=0}^l|a_{k,l-n}||\beta_{n,l-n}|\le \sum_{j=0}^l|a_{k,j}||\beta_{j+l,j}|
\end{equation*}
after re-indexing.  Using the growth condition on $\{\beta_{n,j}\}$ and the fact that $\{a_{k,l}\}$ is a RD sequence we have for every $M>0$:
\begin{equation*}
\begin{aligned}
|c_{k,l}|&\le C\sum_{j=0}^l|a_{k,j}|\frac{(1+j)^N}{(1+j+l)^M} \le \frac{C}{(1+k)^M}\sum_{j=0}^\infty\frac{(1+j)^N}{(1+j)^{N+2}(1+j+l)^M}\\
&\le\frac{\pi^2}{6}\cdot\frac{C}{(1+k)^M(1+l)^M}\,.
\end{aligned}
\end{equation*}
Next we estimate $\{b_{j,l}\}$.  Again, using the growth condition on $\{\beta_{n,j}\}$ and $\{a_{k,l}\}$ being a RD sequence, by taking $M>N$ we have
\begin{equation*}
\begin{aligned}
|b_{jl}| &\le \sum_{n\ge0}|a_{j+n,l}||\beta_{n,j}|\le\sum_{n\ge0}(1+j)^N|a_{j+n,l}| \\
&\le \textrm{const}\sum_{n\ge0}\frac{(1+j)^N}{(1+j+n)^{2M+2}(1+l)^M}\le \frac{\textrm{const}}{(1+j)^M(1+l)^M}\,.
\end{aligned}
\end{equation*}
Thus both $\{b_{j,l}\}$ and $\{c_{k,l}\}$ are RD sequences finishing the proof.
\end{proof}

\section{Classification of Derivations on $\mathcal{T}^\infty$}
In this section we classify derivations on the smooth Toeplitz algebra.  
Namely, we prove that a derivation on $\mathcal{T}^\infty$ is automatically continuous and up to an inner derivation must be a lift of a derivation from $C^\infty(\R/\Z)\cong\mathcal{T}^\infty/\mathcal{K}^\infty$.

Let $\delta:\mathcal{T}^\infty\to\mathcal{T}^\infty$ be a derivation.  We have the following simple observations:

\begin{prop}\label{oper_zero_iff_matrix_coef_zero}
Given an $a$ in $\mathcal{T}$, $a=0$ if and only if for all $i$, $j$, $k$, and $l$, we have
\begin{equation*}
P_{i,j}aP_{k,l}=0\,.
\end{equation*}
\end{prop}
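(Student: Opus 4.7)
The forward direction is trivial: if $a=0$ then $P_{i,j}aP_{k,l}=0$ for every quadruple of indices. So the content is entirely in the reverse direction, and my plan is to reduce it to the standard fact that an operator is zero iff all of its matrix coefficients in a fixed orthonormal basis vanish.

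The key step is a one-line computation expressing the sandwich $P_{i,j}aP_{k,l}$ in terms of a single matrix coefficient of $a$. Using the action $P_{k,l}E_m = \chi_{l,m}E_k$ of the matrix units on the basis $\{E_m\}$, I would apply $P_{i,j}aP_{k,l}$ to an arbitrary basis vector $E_m$ and track what happens: $P_{k,l}$ kills $E_m$ unless $m=l$, in which case we get $E_k$; then $aE_k = \sum_n a_{n,k}E_n$; then $P_{i,j}$ projects out the $E_j$-component and sends it to $E_i$. The upshot is
\begin{equation*}
P_{i,j}aP_{k,l} = a_{j,k}\,P_{i,l},
\end{equation*}
where $a_{j,k}=\langle E_j,aE_k\rangle$ are the matrix coefficients of $a$ in the basis $\{E_m\}$.

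Since each $P_{i,l}$ is a nonzero partial isometry, the hypothesis $P_{i,j}aP_{k,l}=0$ for all $i,j,k,l$ forces $a_{j,k}=0$ for all $j,k$ (it suffices, for each fixed $j,k$, to choose say $i=l=0$). A bounded operator with all matrix coefficients zero in an orthonormal basis is the zero operator, so $a=0$, completing the proof.

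There is no real obstacle here; the only thing to be careful about is keeping the index conventions from the paper straight, namely that $P_{k,l}^*=P_{l,k}$ and $P_{k,l}E_m=\chi_{l,m}E_k$, so that the computed identity $P_{i,j}aP_{k,l}=a_{j,k}P_{i,l}$ comes out with the correct indices.
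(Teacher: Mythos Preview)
Your proof is correct and is exactly the approach the paper takes: the paper's proof is the single sentence ``the equation means that the matrix coefficients of $a$ are zero,'' and you have spelled out precisely why, via the identity $P_{i,j}aP_{k,l}=a_{j,k}P_{i,l}$ (which the paper itself states and uses in the very next proposition).
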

The proof is immediate since the equation means that the matrix coefficients of $a$ are zero.

\begin{prop}\label{der_proj_zero_der_a_zero}
Let $a\in\mathcal{T}^\infty$.  If $\delta(P_{i,j})=0$ for all $i$ and $j$, then $\delta(a)=0$.
\end{prop}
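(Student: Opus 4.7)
The plan is to reduce the statement to Proposition \ref{oper_zero_iff_matrix_coef_zero} by showing that $P_{i,j}\delta(a)P_{k,l}=0$ for all $i,j,k,l$. The key observation is that the matrix units are not only multiplicatively closed but act as ``compressions'' in the sense that $P_{i,j}\,a\,P_{k,l}$ is always a scalar multiple of a single matrix unit. Specifically, using the defining relations $P_{i,j}P_{r,s}=\chi_{j,r}P_{i,s}$, a short direct computation shows that
\begin{equation*}
P_{i,j}\,a\,P_{k,l}=a_{j,k}P_{i,l},
\end{equation*}
where $a_{j,k}=\langle aE_k,E_j\rangle$ is a matrix coefficient of $a$. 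In particular, $P_{i,j}\,a\,P_{k,l}$ lies in $\mathcal{K}^\infty\subset\mathcal{T}^\infty$ for any $a\in\mathcal{T}^\infty$, so everything in sight is in the domain of $\delta$.

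Next I would apply the Leibniz rule to the three-fold product:
\begin{equation*}
\delta(P_{i,j}\,a\,P_{k,l})=\delta(P_{i,j})\,a\,P_{k,l}+P_{i,j}\,\delta(a)\,P_{k,l}+P_{i,j}\,a\,\delta(P_{k,l}).
\end{equation*}
By hypothesis $\delta(P_{i,j})=0$ and $\delta(P_{k,l})=0$, so the first and third terms vanish and the identity collapses to
\begin{equation*}
P_{i,j}\,\delta(a)\,P_{k,l}=\delta(P_{i,j}\,a\,P_{k,l})=\delta(a_{j,k}P_{i,l})=a_{j,k}\,\delta(P_{i,l})=0,
\end{equation*}
again by the hypothesis on $\delta(P_{i,l})$ together with linearity.

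Since this holds for every choice of $i,j,k,l$, Proposition \ref{oper_zero_iff_matrix_coef_zero} gives $\delta(a)=0$, which is the conclusion. Notice that the argument does not invoke continuity of $\delta$ anywhere, nor any special structure of $\mathcal{T}^\infty$ beyond the fact that it is a $\ast$-subalgebra containing all the matrix units; in effect we are using only the Leibniz rule and the matrix-unit relations. There is no real obstacle here: the only thing one must be careful about is verifying that $P_{i,j}\,a\,P_{k,l}$ really is a scalar multiple of a matrix unit, but that follows at once from how the $P_{i,j}$ act on the orthonormal basis $\{E_k\}$.
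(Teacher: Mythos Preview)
Your proof is correct and follows essentially the same approach as the paper: apply the Leibniz rule to $P_{i,j}\,a\,P_{k,l}$, use the hypothesis to kill the outer terms, identify $P_{i,j}\,a\,P_{k,l}=a_{j,k}P_{i,l}$ so that its image under $\delta$ vanishes, and then invoke Proposition~\ref{oper_zero_iff_matrix_coef_zero}. If anything, you spell out the three-fold Leibniz expansion and the final step $\delta(a_{j,k}P_{i,l})=a_{j,k}\delta(P_{i,l})=0$ more explicitly than the paper does.
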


\begin{proof}
Using the Leibniz rule and the assumption that $\delta(P_{i,j})=0$, we have by direct computation that
\begin{equation*}
\delta(P_{i,j}aP_{k,l}) = P_{i,j}\delta(a)P_{k,l}\,.
\end{equation*}
On the other hand, we have:
\begin{equation*}
P_{i,j}aP_{k,l} = a_{j,k}P_{i,l}\,,
\end{equation*}
where $\{a_{j,k}\}$ are the matrix coefficients of $a$ in the basis $\{E_k\}$.
Putting this together with the previous equation yields
\begin{equation*}
 P_{i,j}\delta(a)P_{k,l}=\delta(P_{i,j}aP_{k,l})=0\,.
\end{equation*}
Therefore it follows from Proposition \ref{oper_zero_iff_matrix_coef_zero} that $\delta(a)=0$.
\end{proof}

\begin{prop}
If $\delta_1,\delta_2:\mathcal{T}^\infty\to\mathcal{T}^\infty$ are any two derivations such that
\begin{equation*}
\delta_1(U)=\delta_2(U)\quad\textrm{and}\quad\delta_1(U^*)=\delta_2(U^*)
\end{equation*}
hold, then
\begin{equation*}
\delta_1(a) = \delta_2(a)\,,
\end{equation*}
for all $a\in\mathcal{T}^\infty$.
\end{prop}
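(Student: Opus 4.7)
The plan is to form the difference derivation $\delta := \delta_1 - \delta_2$, which is also a derivation $\mathcal{T}^\infty \to \mathcal{T}^\infty$, and to prove that $\delta \equiv 0$ by verifying the hypothesis of the preceding Proposition \ref{der_proj_zero_der_a_zero}, namely that $\delta(P_{i,j}) = 0$ for every pair of nonnegative integers $i,j$. The key algebraic observation is that every matrix unit $P_{i,j}$ can be written purely as a polynomial expression in $U$, $U^*$, and $I$: specifically, $P_{0,0} = I - UU^*$ and $P_{i,j} = U^i P_{0,0}(U^*)^j$ for all $i,j \geq 0$.

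The execution proceeds in a few short steps. First, I would note that any derivation on a unital algebra annihilates the identity: applying Leibniz to $I = I\cdot I$ gives $\delta(I) = 2\delta(I)$, hence $\delta(I) = 0$. Second, since $\delta(U) = 0$ and $\delta(U^*) = 0$ by hypothesis, Leibniz immediately yields $\delta(UU^*) = \delta(U) U^* + U \delta(U^*) = 0$, and therefore
\begin{equation*}
\delta(P_{0,0}) = \delta(I - UU^*) = 0.
\end{equation*}
Third, a trivial induction using Leibniz gives $\delta(U^i) = 0$ and $\delta((U^*)^j) = 0$ for every $i, j \geq 0$. Fourth, applying the Leibniz rule to the identity $P_{i,j} = U^i P_{0,0}(U^*)^j$ yields
\begin{equation*}
\delta(P_{i,j}) = \delta(U^i) P_{0,0} (U^*)^j + U^i \delta(P_{0,0})(U^*)^j + U^i P_{0,0} \delta((U^*)^j) = 0.
\end{equation*}

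Finally, with $\delta(P_{i,j}) = 0$ for all $i,j$, Proposition \ref{der_proj_zero_der_a_zero} applies and yields $\delta(a) = 0$ for every $a \in \mathcal{T}^\infty$, which gives $\delta_1(a) = \delta_2(a)$ as required. There is really no main obstacle here; the argument is purely algebraic and uses no topological input beyond the Leibniz rule, since the previous proposition has already converted the problem of vanishing on all of $\mathcal{T}^\infty$ into vanishing on the matrix units, and the matrix units are expressible in terms of the two generators whose images under $\delta$ are zero by hypothesis.
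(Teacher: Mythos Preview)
Your proof is correct and follows essentially the same approach as the paper: form the difference derivation, use the Leibniz rule together with the identity $P_{i,j}=U^i(I-UU^*)(U^*)^j$ to conclude $\delta(P_{i,j})=0$, and then invoke Proposition~\ref{der_proj_zero_der_a_zero}. You are simply a bit more explicit about the intermediate steps (e.g., $\delta(I)=0$ and the induction on powers), which the paper compresses into the single remark that $\delta$ vanishes on any polynomial in $U$ and $U^*$.
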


\begin{proof}
Consider the map $\delta=\delta_1-\delta_2:\mathcal{T}^\infty\to\mathcal{T}^\infty$.  Then clearly $\delta$ is a derivation on $\mathcal{T}^\infty$ and 
\begin{equation*}
\delta(U)=\delta(U^*)=0\,.
\end{equation*}
Thus for any polynomial, $a$, in $U$ and $U^*$, by the Leibniz rule, we have $\delta(a)=0$.  However, since $P_{i,j}=U^i(1-UU^*)(U^*)^j$ are polynomials in $U$ and $U^*$ and so $\delta(P_{i,j})=0$.  Thus the result follows from Proposition \ref{der_proj_zero_der_a_zero}.
\end{proof}

A key step in classifying derivations on $\mathcal{T}^\infty$ is the following existence theorem.

\begin{theo}\label{b_c_inner_der}
Let $b$ and $c$ be two smooth compact operators so that
\begin{equation}\label{bc_condition}
cU+U^*b=0\,.
\end{equation}
Then, there exists an unique derivation $\delta:\mathcal{T}^\infty\to\mathcal{T}^\infty$ such that
\begin{equation*}
\delta(U)=b\quad\textrm{and}\quad\delta(U^*)=c\,.
\end{equation*}
Moreover, $\delta$ is an inner derivation and its range is contained in $\mathcal{K}^\infty$, that is $\delta(\mathcal{T}^\infty)\subseteq\mathcal{K}^\infty$.
\end{theo}
Notice that the condition \eqref{bc_condition} is necessary because of the following calculation:
\begin{equation*}
0 = \delta(I) = \delta(U^*U) = \delta(U^*)U + U^*\delta(U) = cU+U^*b\,.
\end{equation*}

\begin{proof}
We write $b$ and $c$ in the following way:
\begin{equation*}
b = \sum_{n>0}U^n\beta_n(\K) + \sum_{n\le0}\beta_n(\K)(U^*)^{-n}\quad\textrm{and}\quad c = \sum_{n\ge0}U^n\gamma_n(\K) + \sum_{n<0}\gamma_n(\K)(U^*)^{-n}\,.
\end{equation*}
Using this and the commutation relation from equation \eqref{comm_rel} implies that
\begin{equation*}
\left\{
\begin{aligned}
\gamma_{-1}(\K) &= -\beta_1(\K) \\
\gamma_{n-1}(\K+I) &= -\beta_{n+1}(\K) &&\textrm{for }n>0\\
\gamma_{n-1}(\K) &=-\beta_{n+1}(\K+I) &&\textrm{for }n<0\,.
\end{aligned}\right.
\end{equation*}
Notice that this determines $\gamma$ in terms of $\beta$ except for the $\gamma_n(0)$ term for $n\ge0$ and this also determines $\beta$ in terms of $\gamma$ except for the $\beta_n(0)$ for $n\ge0$.

The goal is to show that there exists an $\alpha\in\mathcal{T}^\infty$ with $\alpha=T(f)+\tilde{\alpha}$ so that $\delta(a) = [\alpha,a]$.  We write $\alpha$ in terms of its Fourier series, that is
\begin{equation*}
\alpha = \sum_{n\ge0}U^n\alpha_n(\K) + \sum_{n<0}\alpha_n(\K)(U^*)^n\,.
\end{equation*}
A direct computation using the desired commutator outcome yields the following formulas for $\delta(U)$ and $\delta(U^*)$ in terms of $\alpha$:
\begin{equation*}
\begin{aligned}
\delta(U) &= \sum_{n\ge0}U^{n+1}(\alpha_n(\K+I)-\alpha_n(\K)) + \sum_{n<0}(\alpha_n(\K)-\alpha_n(\K-I))(U^*)^{-n-1} \\
\delta(U^*) &=\sum_{n>0}U^{n-1}(\alpha_n(\K-I)-\alpha_n(\K)) + \sum_{n\le0}(\alpha_n(\K)-\alpha_n(\K+I))(U^*)^{-n+1}\,,
\end{aligned}
\end{equation*}
where we've used the convention that $\alpha_n(-1)=0$.  By comparing Fourier coefficients, we get 4 sets of relations. The first two sets of relations between $\alpha_n$, $\beta_n$ and $\gamma_n$ are as follows:
\begin{equation*}
\left\{
\begin{aligned}
\alpha_n(k) - \alpha_n(k-1) &=\beta_{n+1}(k) &&\textrm{for }n<0, k\ge0 \\
\alpha_n(k-1) - \alpha_n(k) &= \gamma_{n-1}(k) &&\textrm{for }n>0, k\ge0\,.
\end{aligned}\right.
\end{equation*}
These two recurrence relations can be easily solved for $\alpha_n$, yielding the formulas:
\begin{equation*}
\alpha_n(k) = \left\{
\begin{aligned}
&\sum_{j=0}^k \beta_{n+1}(j) &&\textrm{for }n<0 \\
&-\sum_{j=0}^k\gamma_{n-1}(j) &&\textrm{for }n>0\,.
\end{aligned}\right.
\end{equation*}
There is another set of recurrence relations from making the above comparison, namely:
\begin{equation*}
\left\{
\begin{aligned}
\alpha_n(k) - \alpha_n(k+1) &= \gamma_{n-1}(k) &&\textrm{for }n<0\\
\alpha_n(k+1) - \alpha_n(k) &= \beta_{n+1}(k) &&\textrm{for }n>0\,.
\end{aligned}\right.
\end{equation*}
These follow immediately from our formulas for $\alpha_n(k)$ from above and the relations that relate $\beta_n(k)$ and $\gamma_n(k)$.  However, for the $n=0$ case we get two equivalent equations by virtue of $\gamma_{-1}(k) =-\beta_1(k)$, which are
\begin{equation*}
\begin{aligned}
\alpha_0(k+1)-\alpha_0(k) &= \beta_1(k) \\
\alpha_0(k)-\alpha_0(k+1) &=\gamma_{-1}(k)\,.
\end{aligned}
\end{equation*}
Here the solution will not be unique which corresponds to the non-uniqueness of $\alpha$, at least up to a constant multiple of the identity.  By the normalization, $\alpha_0(0)=0$, solving one of these equivalent relations yields
\begin{equation*}
\alpha_0(k) = \sum_{j=0}^{k-1}\beta_1(j) \quad\textrm{for }k>0\,.
\end{equation*}
By solving these relations for $\alpha$ in terms of $\beta$ and $\gamma$, this demonstrates the existence of such an $\alpha$.  What remains to be shown is that this $\alpha$ actually belongs to $\mathcal{T}^\infty$.  Using our solutions to these relations we can explicitly write the terms to $\{\alpha_n(k)\}_{n\in\Z,k\ge0}$.  In fact we have
\begin{equation*}
\alpha_n(k) = \left\{
\begin{aligned}
&\sum_{j=0}^\infty\beta_1(j) - \sum_{j=k}^\infty\beta_1(j) =f_0 + \tilde{\alpha_0}(k) \\
&\sum_{j=0}^\infty\beta_{n+1}(j) - \sum_{j=k+1}^\infty\beta_{n+1}(j) = f_n + \tilde{\alpha_n}(k) &&\textrm{for }n<0 \\
&-\sum_{j=0}^\infty\gamma_{n-1}(j) + \sum_{j=k+1}^\infty\beta_{n-1}(j) = f_n + \tilde{\alpha_n}(k) &&\textrm{for }n>0\,.
\end{aligned}\right.
\end{equation*}
Now, since $\{\beta_n(k)\}$ and $\{\gamma_n(k)\}$ are RD sequences in both $n$ and $k$, the above series are clearly convergent and moreover, the sequences $\{f_n\}$ and $\{\tilde{\alpha_n}(k)\}$ must be RD sequences as well.  

Consequently, the smooth function $f\in C^\infty(\R/\Z)$ defined with Fourier series
\begin{equation*}
f(x) = \sum_{n\in\Z}f_ne^{2\pi inx}
\end{equation*}
yields a smooth Toeplitz operator
\begin{equation*}
T(f) = \sum_{n\ge0}U^nf_n + \sum_{n<0}f_n(U^*)^{-n}
\end{equation*}
while $\tilde{\alpha}$ given by
\begin{equation*}
\tilde{\alpha} = \sum_{n\ge0}U^n\tilde{\alpha}_n(\K)U^n + \sum_{n<0}\tilde{\alpha}_n(\K)(U^*)^{-n}
\end{equation*}
is a smooth compact operator and we have that $\alpha = T(f) + \tilde{\alpha}$.  This completes the proof.
\end{proof}

Before we are able to state and prove the main decomposition theorem for derivations on the smooth Toeplitz algebra, we need one more proposition.  Similar to the proof of Proposition \ref{smooth_toep_props}, for an $F\in C^\infty(\R/\Z)$, decompose it as $F=F_+ +F_-$, where
\begin{equation*}
F_+(x) = \sum_{n\ge0}F_ne^{2\pi inx}\quad\textrm{and}\quad F_-(x) = \sum_{n<0}F_ne^{2\pi inx}\,.
\end{equation*}

\begin{prop}\label{special_der_delta_F}
The formula
\begin{equation*}
\delta_F(a) = [T(f_+)\K +\K T(f_-),a]
\end{equation*}
defines a continuous derivation on $\mathcal{T}^\infty$ so that following hold for the quotient map:
\begin{equation*}
{q}(\delta_F(U)) = f(x)e^{2\pi ix}\quad\textrm{and}\quad {q}(\delta_F(U^*))=-f(x)e^{-2\pi ix}\,.
\end{equation*}
\end{prop}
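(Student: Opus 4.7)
The plan is to reinterpret the commutator $[T(f_+)\K + \K T(f_-), a]$ in a form which manifestly lands in $\mathcal{T}^\infty$, then verify continuity and compute the two symbols.

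For well-definedness on $\mathcal{T}^\infty$, I would push $\K$ past $a$ using $\K a = a\K + \delta_\K(a)$ to rewrite the defining formula as
\begin{equation*}
\delta_F(a) = T(f_+)\delta_\K(a) + [T(f_+), a]\K + \K[T(f_-), a] + \delta_\K(a)T(f_-).
\end{equation*}
Writing $a = T(g) + c$ with $g \in C^\infty(\R/\Z)$ and $c \in \mathcal{K}^\infty$, Proposition \ref{smooth_toep_props} yields $[T(f_\pm), T(g)], [T(f_\pm), c] \in \mathcal{K}^\infty$, so $[T(f_\pm), a] \in \mathcal{K}^\infty$. A direct estimate based on $\delta_\K(\K) = 0$ (giving $\|b\K\|_{M,N} \leq \|b\|_{M,N+1}$ and similarly on the other side) shows that right- and left-multiplication by $\K$ preserves $\mathcal{K}^\infty$ continuously, so the middle two terms lie in $\mathcal{K}^\infty \subseteq \mathcal{T}^\infty$. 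The remaining two terms are products in the Fr\'echet $\ast$-algebra $\mathcal{T}^\infty$, hence also lie in $\mathcal{T}^\infty$. Continuity of $\delta_F$ then follows from continuity of $\delta_\K$, of the multiplications, and of the commutator maps $a \mapsto [T(f_\pm), a]$.

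The Leibniz rule is automatic from the commutator structure: with $X := T(f_+)\K + \K T(f_-)$ treated as an operator on the common invariant dense subspace $S_\mathcal{E}$, the formal identity $[X, ab] = [X, a]b + a[X, b]$ passes to $\mathcal{T}^\infty$ because each side equals the rewritten expression above, which is well-defined there.

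For the symbol computation, specialize the rewritten formula to $a = U$. Using $\delta_\K(U) = U$ together with the fact that the middle two terms are already smooth compact, one finds
\begin{equation*}
\delta_F(U) \equiv T(f_+)U + U T(f_-) \pmod{\mathcal{K}^\infty}.
\end{equation*}
Now $T(f_+)U = T(f_+)T(e^{2\pi ix}) = T(f_+ e^{2\pi ix})$ holds exactly (both factors are of non-negative frequency type), while $UT(f_-) = T(e^{2\pi ix})T(f_-) \equiv T(e^{2\pi ix}f_-)$ modulo $\mathcal{K}^\infty$ by Proposition \ref{smooth_toep_props}. Summing and applying $q$ yields $q(\delta_F(U)) = e^{2\pi ix}(f_+ + f_-) = f(x)e^{2\pi ix}$. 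The argument for $U^*$ is parallel: $\delta_\K(U^*) = -U^*$ produces the overall sign, $T(f_+)U^* \equiv T(f_+ e^{-2\pi ix})$ modulo $\mathcal{K}^\infty$, and $U^*T(f_-) = T(e^{-2\pi ix})T(f_-) = T(e^{-2\pi ix}f_-)$ exactly. The only real subtlety is the careful bookkeeping of which Toeplitz product identities are exact and which merely hold modulo a smooth compact correction; once the commutator is rewritten as above, the substantive work is entirely delegated to Proposition \ref{smooth_toep_props}.
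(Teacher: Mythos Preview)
Your proof is correct. Your approach differs from the paper's mainly in organization: you give a single rewriting
\[
\delta_F(a) = T(f_+)\delta_\K(a) + [T(f_+),a]\K + \K[T(f_-),a] + \delta_\K(a)T(f_-)
\]
valid for all $a\in\mathcal{T}^\infty$ and read off both well-definedness and the symbol formulas from it, whereas the paper treats $U$, $U^*$, and $T(g)$ case by case. The paper also obtains the sharper \emph{exact} identities $\delta_F(U)=UT(f)$ and $\delta_F(U^*)=-T(f)U^*$ (via the observation $[T(f_-),U]=P_0T(f_-)U$ together with $\K P_0=0$, which kills the residual compact term), while you establish these only modulo $\mathcal{K}^\infty$; since the statement asks only for the values under $q$, your weaker version suffices. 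One small remark: the ``similarly on the other side'' for left multiplication by $\K$ is not quite symmetric---for $\K b$ you need $\K b = b\K + \delta_\K(b)$ rather than the direct norm estimate---but the conclusion is of course correct. Both arguments ultimately rest on Proposition~\ref{smooth_toep_props} and on the fact that multiplication by $\K$ preserves $\mathcal{K}^\infty$ continuously, so the substantive content is the same.
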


\begin{proof}
Since $\delta_F$ is defined though a commutator, using the Leibniz rule, along with the fact that $U$ commutes with $T(f_+)$ and $[\K,U]=U$ we get
\begin{equation*}
\begin{aligned}
\delta_F(U) &= T(f_+)[\K,U] + [\K,U]T(f_-) + \K[T(f_-),U] \\
&= U(T(f_+)+T(f_-)) + \K[T(f_-),U] = UT(f) +\K[T(f_-),U]\,.
\end{aligned}
\end{equation*}
A direct calculation shows that $[T(f_-),U] = P_0T(f_-)U$ and since $\K P_0=0$, we obtain the formula $\delta_F(U)=UT(f)$.  Consequently, we have ${q}(\delta_F(U))=f(x)e^{2\pi ix}$.  A similar argument shows that $\delta_F(U^*)=-T(f)U^*$, and thus the corresponding formula for ${q}(\delta_F(U^*))$ follows.

Clearly $\delta_F$ is a continuous derivation on $\mathcal{K}^\infty$, so we only need to study its action on a $T(f)$ for $f\in C^\infty(\R/\Z)$.  Using the decomposition $f=f_+ + f_-$ we can compute $\delta_F$ on $T(f)$ as follows:
\begin{equation*}
\begin{aligned}
\delta_F(T(f_-)) &= [T(f_+)\K + \K T(f_-),T(f_-)]\\
&= T(f_+)[\K,T(f_-)] + [T(f_+),T(f_-)]\K + [\K,T(f_-)]T(f_-)\,,
\end{aligned} 
\end{equation*}
since $T(f_-)$ and $T(f_-)$ commute with each other.  The first and the third term in the above equation can be combined and we arrive at
\begin{equation*}
\delta_F(T(f_-)) = T(f)T\left(\frac{1}{2\pi i}\frac{d}{dx}f_-\right) + [T(f_+),T(f_-)]\K\,.
\end{equation*}
It follows from Proposition \ref{smooth_toep_props}, that $[T(f_+),T(f_-)]$ is in $\mathcal{K}^\infty$ and hence the second term in the above equation is also in $\mathcal{K}^\infty$.  A similar consideration applies to $\delta_F(T(f_+))$.  By the remarks after Proposition \ref{smooth_toep_props}, it now follows that $\delta_F$ is a continuous derivation on $\mathcal{T}^\infty$.
\end{proof}

Notice that for the derivation $\delta_F$ defined the above proposition, given an $a\in\mathcal{T}^\infty$, we have for the quotient map
\begin{equation*}
{q}(\delta_F(a)) = F(x)\frac{1}{2\pi i}\frac{d}{dx}{q}(a)(x)\,.
\end{equation*}
Hence, $\delta_F$ is a lift of the derivation
\begin{equation*}
F(x)\frac{1}{2\pi i}\frac{d}{dx}
\end{equation*}
on $C^\infty(\R/\Z)$.

\begin{theo}
If $\delta:\mathcal{T}^\infty\to\mathcal{T}^\infty$ is a derivation, then $\delta$ is continuous and there exists a unique function $F\in C^\infty(\R/\Z)$ such that 
\begin{equation*}
\delta = \delta_F + \tilde{\delta}\,,
\end{equation*}
where $\delta_F$ was the derivation defined in Proposition \ref{special_der_delta_F} and $\tilde{\delta}:\mathcal{T}^\infty\to\mathcal{K}^\infty$ is an inner derivation.
\end{theo}

\begin{proof}
Define the function $F\in C^\infty(\R/\Z)$ through the quotient map equation
\begin{equation*}
{q}(\delta(U))(x) = F(x)e^{2\pi ix}\,.
\end{equation*}
Since
\begin{equation*}
0=\delta(U^*U) = U^*\delta(U) + \delta(U^*)U\,,
\end{equation*}
by applying the quotient map ${q}$ to above equation, it follows that
\begin{equation*}
{q}(\delta(U^*)) = -F(x)e^{-2\pi ix}\,.
\end{equation*}
Define the map $\tilde{\delta}:\mathcal{T}^\infty\to\mathcal{T}^\infty$ by
\begin{equation*}
\tilde{\delta} = \delta - \delta_F\,.
\end{equation*}
It's clear that $b:=\tilde{\delta}(U)$ and $c:=\tilde{\delta}(U^*)$ are both in $\mathcal{K}^\infty$ by the definition of $F$.  Since
\begin{equation*}
0=\tilde{\delta}(U^*U) = U^*\tilde{\delta}(U) + \tilde{\delta}(U^*)U\,,
\end{equation*}
we see that $b$ and $c$ satisfy the assumptions of Theorem \ref{b_c_inner_der}.   Consequently, $\tilde{\delta}$ is an inner derivation with range contained in $\mathcal{K}^\infty$ and in particular, it is continuous.  Since $\delta_F$ is continuous by Proposition \ref{special_der_delta_F}, $\delta$ is automatically continuous.  The uniqueness of $F$ follows from the fact that $\delta_F$ is inner if and only if $F\equiv0$ since the quotient map ${q}$ acting on any inner derivation is always equal to $0$.
\end{proof}

\section{Automorphisms}
In this section we discuss the group of holomorphic automorphisms of the classical disk and its natural action on the noncommutative disk $\mathcal{T}$. The construction is similar to \cite{KL} and we additionally prove here that such automorphisms preserve the smooth subalgebra $\mathcal{T}^\infty$ of the Toeplitz algebra $\mathcal{T}$. 

Consider the Lie group:
\begin{equation*}
\textrm{SU}(1,1)=\left\{g=\begin{bmatrix}
\alpha & \beta\\
\bar\beta & \bar\alpha
\end{bmatrix}, |\alpha|^2-|\beta|^2=1\right\}.
\end{equation*}
Given $g\in\textrm{SU}(1,1)$, the M\"{o}bius transformation (denoted by the same letter)
\begin{equation}\label{g_formula}
g(z)=\frac{\alpha z+\beta}{\bar\beta z+\bar\alpha}
\end{equation}
is a biholomorphism of the open unit disk $\{z\in\C:|z|<1\}$ and every biholomorphisms of the disk is of this form. Moreover, each transformation preserves the boundary of the disk defining a smooth diffeomorphism of the circle $S^1=\{z\in\C:|z|=1\}$.

It was observed in \cite{KL} that the group $\textrm{SU}(1,1)$ acts on $\mathcal{T}$ by a formula like \eqref{g_formula} applied to a generator of $\mathcal{T}$. We will repeat some of the arguments from \cite{KL}, however applied to a different generator of $\mathcal{T}$, namely $U$. 
\begin{prop}\label{invert_prop}
With the above notation, $\bar\beta U+\bar\alpha$ is invertible in $\mathcal{T}$ for every $g\in\textrm{SU}(1,1)$. 
Moreover, we have 
$$(\bar\beta U+\bar\alpha)^{-1}\in \mathcal{T}^\infty.$$
\end{prop}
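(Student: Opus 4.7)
The plan is to identify $\bar\beta U + \bar\alpha$ as the Toeplitz operator $T(f)$ with symbol $f(x) = \bar\alpha + \bar\beta e^{2\pi i x}$ and invert it via a Neumann series, after which membership in $\mathcal{T}^\infty$ will follow from holomorphic stability of the smooth subalgebra.

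The first step is to exploit the constraint $|\alpha|^2 - |\beta|^2 = 1$, which forces $|\bar\alpha| > |\bar\beta|$, so that $r := \bar\beta/\bar\alpha$ satisfies $|r| < 1$. Combined with $\|U\| = 1$, this makes the geometric series
\[
S := \frac{1}{\bar\alpha}\sum_{n=0}^\infty (-r)^n U^n
\]
converge absolutely in operator norm. Each partial sum is a polynomial in $U$, hence lies in $\mathcal{T}$, and completeness of $\mathcal{T}$ places the limit $S$ in $\mathcal{T}$ as well. A direct algebraic computation using $\bar\beta U + \bar\alpha = \bar\alpha(I + rU)$ and the telescoping
\[
(I + rU)\sum_{n \geq 0}(-r)^n U^n = \sum_{n \geq 0}(-r)^n U^n - \sum_{n \geq 0}(-r)^{n+1} U^{n+1} = I
\]
confirms that $S$ is a two-sided inverse, so $\bar\beta U + \bar\alpha$ is invertible in $\mathcal{T}$.

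For the second claim, the cleanest route is Theorem \ref{ToepStableProp}: since $\bar\beta U + \bar\alpha$ is a polynomial in $U$, it automatically lies in $\mathcal{T}^\infty$, and closure under holomorphic functional calculus (applied to the function $\zeta \mapsto \zeta^{-1}$, holomorphic on a neighborhood of the spectrum by the previous step) yields $(\bar\beta U + \bar\alpha)^{-1} \in \mathcal{T}^\infty$. Alternatively, one can argue directly: $f$ never vanishes on $\R/\Z$, so $1/f \in C^\infty(\R/\Z)$ with Fourier expansion $\frac{1}{\bar\alpha}\sum_{n \geq 0}(-r)^n e^{2\pi i n x}$; because both $f$ and $1/f$ have only nonnegative Fourier coefficients, the identity $T(f_+)T(g_+) = T(f_+g_+)$ established inside the proof of Proposition \ref{smooth_toep_props} gives $T(f)T(1/f) = T(1) = I$. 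In other words $S = T(1/f)$, which is manifestly in $\mathcal{T}^\infty$ by the definition of the smooth Toeplitz algebra.

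The proof presents no serious obstacle once the preceding machinery is in place; the only point deserving care is the justification that the Neumann series converges to an element of $\mathcal{T}$ (not merely to a bounded operator on $H$), which is handled by the completeness of $\mathcal{T}$ in operator norm, together with the observation that $|r|<1$ is genuinely forced by the $\operatorname{SU}(1,1)$ condition rather than being a mere convenience.
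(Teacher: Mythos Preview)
Your proof is correct and follows essentially the same approach as the paper: both factor $\bar\beta U+\bar\alpha=\bar\alpha(I+rU)$ with $|r|<1$ and invoke the Neumann series for invertibility in $\mathcal{T}$, then apply Theorem~\ref{ToepStableProp} (closure under holomorphic calculus) to conclude that the inverse lies in $\mathcal{T}^\infty$. Your write-up is more explicit about why the limit lands in $\mathcal{T}$, and your alternative argument via $T(f_+)T(g_+)=T(f_+g_+)$ to identify the inverse directly as $T(1/f)\in\mathcal{T}^\infty$ is a nice self-contained bonus that the paper does not include.
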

\begin{proof} We write $\bar\beta U+\bar\alpha=\bar\alpha\left(I+\frac{\bar\beta}{\bar\alpha}U\right)$ and notice that
\begin{equation*}
\left\|\frac{\bar\beta}{\bar\alpha}U\right\|^2=\frac{|\beta|^2}{|\alpha|^2}=\frac{|\beta|^2}{1+|\beta|^2}<1,
\end{equation*}
so that $\bar\beta U+\bar\alpha$ is invertible. Since 
$$\bar\beta U+\bar\alpha=T\left(\bar\beta z+\bar\alpha\right)$$
is a smooth Toeplitz operator, its inverse must also be smooth by Proposition \ref{ToepStableProp}.
\end{proof}

\begin{prop}
For every $g\in\textrm{SU}(1,1)$ let 
$$W_g:=(\alpha U+\beta)(\bar\beta U+\bar\alpha)^{-1}.$$ 
Then we have $W_g\in\mathcal{T}^\infty$ and $W_g^*W_g=I$.
\end{prop}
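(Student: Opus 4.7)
The plan is to dispatch the two assertions separately, starting with the algebraic identity and then bootstrapping membership in $\mathcal{T}^\infty$ from results already established.

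For the isometry property $W_g^*W_g = I$, I would first note that $W_g^* = \bigl((\bar\beta U+\bar\alpha)^{-1}\bigr)^*(\alpha U+\beta)^* = (\beta U^* + \alpha)^{-1}(\bar\alpha U^* + \bar\beta)$, where invertibility of $\beta U^* + \alpha$ follows by taking adjoints in Proposition \ref{invert_prop}. The whole identity then reduces to the algebraic claim
\begin{equation*}
(\bar\alpha U^* + \bar\beta)(\alpha U + \beta) = (\beta U^* + \alpha)(\bar\beta U + \bar\alpha).
\end{equation*}
Expanding both sides and using the defining relation $U^*U = I$ of the Toeplitz algebra (but crucially not $UU^* = I$), each side collapses to $|\alpha|^2 + |\beta|^2 + \bar\alpha\beta\, U^* + \alpha\bar\beta\, U$. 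Once that identity is in hand, multiplying $W_g^* W_g$ out and cancelling $(\beta U^* + \alpha)^{-1}$ on the left and $(\bar\beta U + \bar\alpha)^{-1}$ on the right yields $I$. The condition $|\alpha|^2 - |\beta|^2 = 1$ from $\mathrm{SU}(1,1)$ is not needed for this part per se, but it is what guaranteed invertibility in the first place via Proposition \ref{invert_prop}.

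For smoothness of $W_g$, I would argue that $\alpha U + \beta = T(\alpha e^{2\pi i x} + \beta)$ is a smooth Toeplitz operator since its symbol lies in $C^\infty(\R/\Z)$, so it belongs to $\mathcal{T}^\infty$. Proposition \ref{invert_prop} already supplies $(\bar\beta U + \bar\alpha)^{-1} \in \mathcal{T}^\infty$. Since $\mathcal{T}^\infty$ is an algebra (Proposition \ref{smooth_toep_props} together with the Fr\'echet algebra structure established afterwards), the product $W_g = (\alpha U + \beta)(\bar\beta U + \bar\alpha)^{-1}$ lies in $\mathcal{T}^\infty$.

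I do not anticipate any real obstacle here; the only point that warrants a bit of care is the order of factors in the adjoint and the bookkeeping showing that the $U^*U = I$ cancellation (rather than $UU^*$) is what makes the isometry identity work, reflecting the fact that $W_g$ is only an isometry, not a unitary, on the Toeplitz side.
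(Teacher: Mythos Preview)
Your proposal is correct and follows essentially the same route as the paper. The paper compresses your isometry computation into the single identity $I-W_g^*W_g=(\beta U^*+\alpha)^{-1}(I-U^*U)(\bar\beta U+\bar\alpha)^{-1}=0$, which is exactly your expanded comparison of $(\bar\alpha U^* + \bar\beta)(\alpha U + \beta)$ against $(\beta U^* + \alpha)(\bar\beta U + \bar\alpha)$ rewritten as a difference; and for $W_g\in\mathcal{T}^\infty$ the paper likewise invokes Proposition~\ref{invert_prop} together with closure of $\mathcal{T}^\infty$ under multiplication.
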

\begin{proof} The first part $W_g\in\mathcal{T}^\infty$ follows from the previous while the second part $W_g^*W_g=I$ is a result of a straightforward calculation:
\begin{equation*}
I-W_g^*W_g=(\beta U^*+\alpha)^{-1}(I-U^*U)(\bar\beta U+\bar\alpha)^{-1}=0
\end{equation*}
\end{proof}

The correspondence $U\mapsto W_g$ extends to an automorphism $\rho_g$ of the C$^*$-algebra $\mathcal{T}$ such that:
\begin{equation*}
\rho_g(U)=W_g
\end{equation*}
because of Coburn's universality theorem for the Toeplitz algebra. Consequently, we obtain an action $g\mapsto \rho_g$ of $\textrm{SU}(1,1)$  on $\mathcal{T}$ by automorphisms, and the action is easily seen to be continuous.

To proceed further we want to construct unitary operators $\mathcal{U}_g$ implementing the automorphisms $\rho_g$:
\begin{equation}\label{implement}
 \rho_g(a)=\mathcal{U}_g a \mathcal{U}_g^{-1}.
\end{equation}
To this end we need to first study the kernel of $W_g^*$.
\begin{prop}
The kernel  of $W_g^*$ is one dimensional and is spanned by
\begin{equation*}
F_0=\sum_{k=0}^\infty\frac{(-\bar\beta)^k}{(\bar\alpha)^{k+1}}E_k\,.
\end{equation*}
Additionally, we have $F_0\in S_\mathcal{E}$, where $S_\mathcal{E}$ is the RD subspace defined in \eqref{S_def} and $\|F_0\|=1$.
\end{prop}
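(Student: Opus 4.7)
The plan is to exploit the explicit formula $W_g = (\alpha U + \beta)(\bar\beta U + \bar\alpha)^{-1}$ and reduce the kernel computation to a first-order recursion. Taking the adjoint gives
\begin{equation*}
W_g^* = (\beta U^* + \alpha)^{-1}(\bar\alpha U^* + \bar\beta),
\end{equation*}
and the factor $\beta U^* + \alpha$ is invertible by exactly the same argument as in Proposition \ref{invert_prop} (its operator norm estimate is $|\beta|/|\alpha| < 1$). Therefore
\begin{equation*}
\ker W_g^* = \ker(\bar\alpha U^* + \bar\beta).
\end{equation*}

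Next, I would expand a candidate vector $F = \sum_{k=0}^\infty f_k E_k$ in the basis $\mathcal{E}$. Using $U^* E_k = E_{k-1}$ for $k \geq 1$ and $U^* E_0 = 0$, the equation $(\bar\alpha U^* + \bar\beta)F = 0$ becomes, coefficient by coefficient,
\begin{equation*}
\bar\alpha f_{k+1} + \bar\beta f_k = 0 \quad \textrm{for every } k \geq 0.
\end{equation*}
This is a one-step linear recursion, solved uniquely up to an overall scalar by $f_k = (-\bar\beta/\bar\alpha)^k f_0$, so the kernel is one-dimensional. Choosing the normalization $f_0 = 1/\bar\alpha$ yields the stated formula for $F_0$.

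For the norm claim, I would compute $\|F_0\|^2$ as a geometric series:
\begin{equation*}
\|F_0\|^2 = \sum_{k=0}^\infty \frac{|\beta|^{2k}}{|\alpha|^{2k+2}} = \frac{1}{|\alpha|^2} \cdot \frac{1}{1-|\beta|^2/|\alpha|^2} = \frac{1}{|\alpha|^2 - |\beta|^2} = 1,
\end{equation*}
where the last equality is the defining identity of $\textrm{SU}(1,1)$. Finally, since $|\beta|/|\alpha| < 1$ strictly, the coefficients $f_k$ decay exponentially, so $\{f_k\}$ is in particular rapidly decreasing, which is precisely the condition placing $F_0$ in the Fr\'echet space $S_\mathcal{E}$ defined in \eqref{S_def}. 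There is no real obstacle here; the only point worth being careful about is invoking the analogue of Proposition \ref{invert_prop} for $\beta U^* + \alpha$ rather than manipulating $W_g^*$ directly.
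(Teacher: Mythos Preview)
Your proof is correct and follows essentially the same approach as the paper: reduce $W_g^*x=0$ to $(\bar\alpha U^*+\bar\beta)x=0$, observe this is a one-step recursion on the coefficients with a one-dimensional solution space, and then verify the norm and rapid decay directly. You supply more detail than the paper (the explicit adjoint factorization, the explicit solution of the recursion, and the geometric series for $\|F_0\|$), but the argument is the same.
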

\begin{proof} Notice that the equation $W_g^*x=0$ is equivalent to:
\begin{equation*}
(\bar\alpha U^*+\bar\beta)\sum_{k=0}^\infty x_kE_k=0,
\end{equation*}
which is a one-step recurrence equation on the coefficients $\{x_k\}$ and hence has one dimensional solution space.
It is a straightforward calculation to verify that $F_0$ above is a solution and that $\|F_0\|=1$. Since the coefficients of $F_0$ decrease exponentially, we have $F_0\in S_\mathcal{E}$.
\end{proof}
Notice for future use that we have the following formula for $F_0$ in terms of $E_0$:
\begin{equation*}
F_0=(\bar\alpha+\bar\beta U)^{-1}E_0\,.
\end{equation*}
If we define 
$$F_k:=W_g^kF_0$$ 
then, since $W_g$ is an isometry and $\|F_0\|=1$, it is easy to verify that $\{F_k\}$ is an orthonormal basis. Consequently, we can define an unitary operator $\mathcal{U}_g$ by:
\begin{equation*}
\mathcal{U}_gE_k:=F_k,\ \  k\geq 0\,.
\end{equation*}
Clearly we have $\mathcal{U}_g U \mathcal{U}_g^{-1}=W_g$
and so the unitary operators $\mathcal{U}_g$ implement the automorphisms $\rho_g$, that is \eqref{implement} holds. We are now ready to formulate the main result of this section.
\begin{theo}
With the above notation, for every $g\in\textrm{SU}(1,1)$ we have:
\begin{equation*}
\rho_g(\mathcal{T}^\infty)\subseteq \mathcal{T}^\infty\,.
\end{equation*}
\end{theo}
\begin{proof} First notice that for any $f\in C(S^1)$ we have:
\begin{equation*}
\rho_g(T(f))=T(f\circ g)\,.
\end{equation*}
This is certainly true for polynomials in $U$ and $U^*$ and so by continuity it is true for any $f\in C(S^1)$. Consequently we need to prove that for every $g\in\textrm{SU}(1,1)$ we have:
\begin{equation*}
\rho_g(\mathcal{K}^\infty)\subseteq \mathcal{K}^\infty\,.
\end{equation*}
Using Proposition \ref{Comp_Inv} it then enough to prove that $\mathcal{U}_g: S_\mathcal{E}\to S_\mathcal{E}$ is a continuous map of Fr\'{e}chet spaces, where $S_\mathcal{E}\subset H$ is the dense subspace of $H$ of linear combinations of the basis elements $\{E_k\}$ with rapid decay coefficients.

It is easier to work with the following operator $\mathcal{V}_g$ instead of $\mathcal{U}_g$:
\begin{equation*}
\mathcal{V}_g:=(\bar\alpha+\bar\beta U)\,\mathcal{U}_g\,.
\end{equation*}
Below, we need the following observation. 
\begin{lem} \label{TfS_lemma}
For every $f\in C^\infty(S^1)$ and every $x\in S_\mathcal{E}$ we have:
\begin{equation*}
\|T(f)x||_N\leq \|f\|_{C^N}\|x||_N\,.
\end{equation*}
\end{lem}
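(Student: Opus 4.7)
The plan is to move the weight $(I+\mathbb{K})^N$ past $T(f)$ using the commutator derivation $\partial_N$, and then bound each piece using the operator norm of Toeplitz operators.

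First I would write
$$(I+\mathbb{K})^N T(f) = T(f)(I+\mathbb{K})^N + \partial_N(T(f)),$$
and then invoke the identity \eqref{partial_delta} to rewrite $\partial_N(T(f))$ as
$$\partial_N(T(f)) = \sum_{j=1}^{N}\binom{N}{j}\delta_{\mathbb{K}}^j(T(f))(I+\mathbb{K})^{N-j}.$$
Adding the $j=0$ term to absorb $T(f)(I+\mathbb{K})^N$, and using \eqref{delta_k_2}, which says $\delta_{\mathbb{K}}^j(T(f)) = T\bigl((\tfrac{1}{2\pi i}\tfrac{d}{dx})^j f\bigr)$, I obtain the clean formula
$$(I+\mathbb{K})^N T(f) = \sum_{j=0}^N \binom{N}{j}\, T\!\left(\left(\tfrac{1}{2\pi i}\tfrac{d}{dx}\right)^j f\right)(I+\mathbb{K})^{N-j}.$$

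Next, applying this to $x \in S_\mathcal{E}$, taking norms, and using the triangle inequality gives
$$\|(I+\mathbb{K})^N T(f) x\| \leq \sum_{j=0}^N \binom{N}{j} \left\|T\!\left(\left(\tfrac{1}{2\pi i}\tfrac{d}{dx}\right)^j f\right)\right\| \cdot \|(I+\mathbb{K})^{N-j} x\|.$$
Two standard bounds then finish the job: the Toeplitz norm inequality \eqref{Tfnorm} provides $\|T(g)\| \leq \|g\|_\infty$, and since $(I+\mathbb{K})^{-j}$ has norm at most $1$ we have $\|(I+\mathbb{K})^{N-j} x\| \leq \|(I+\mathbb{K})^N x\| = \|x\|_N$ for each $0 \le j \le N$. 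Combining these, the right-hand side is at most
$$\|x\|_N \sum_{j=0}^N \binom{N}{j} \left\|\left(\tfrac{1}{2\pi i}\tfrac{d}{dx}\right)^j f\right\|_\infty = \|f\|_{C^N}\,\|x\|_N,$$
by the very definition of $\|f\|_{C^N}$, which gives the desired bound.

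There is no real obstacle here: all ingredients are in place from the preceding subsections, and the binomial coefficients built into the definition of $\|\cdot\|_{C^N}$ are precisely what is needed to match the commutator expansion. The only subtlety is the decision to include the $j=0$ term inside the sum (since $\binom{N}{0}=1$ recovers the raw $T(f)(I+\mathbb{K})^N$ piece), which is what makes the estimate come out with the constant $\|f\|_{C^N}$ exactly rather than with an extra additive term.
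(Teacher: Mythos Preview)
Your proof is correct. The paper's own proof proceeds by induction on $N$, using the one-step commutator $(I+\K)T(f)=T(f)(I+\K)+\delta_\K(T(f))$ together with the recursive property \eqref{Cl_inductive} of the $C^N$ norms, whereas you expand the commutator all at once via the identity \eqref{partial_delta}. These are the same argument in spirit---your version simply unrolls the induction using a formula already established in the paper, which makes the role of the binomial coefficients in the definition of $\|\cdot\|_{C^N}$ more transparent.
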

\begin{proof} The case of $N=0$ is just \eqref{Tfnorm}. Inducting on $N$ we have:
\begin{equation*}
\begin{aligned}
&\|T(f)x||_{N+1}=\|(I+\K)T(f)x||_{N}\leq \|T(f)(I+\K)x\|_N.+ \|\delta_\K(T(f))x\|_N\leq \|f\|_{C^N}\|x||_{N+1}+\\
&+ \left\|\frac{1}{2\pi i}\frac{d}{dx}\,f\right\|_{C^N}\|x||_{N}
\leq \left(\|f\|_{C^N}+\left\|\frac{1}{2\pi i}\frac{d}{dx}\,f\right\|_{C^N}\right)\|x||_{N+1}=\|f\|_{C^{N+1}}\|x||_{N+1}.
\end{aligned}
\end{equation*}
\end{proof}
It now follows from Proposition \ref{invert_prop} and the above Lemma that $\mathcal{U}_g: S_\mathcal{E}\to S_\mathcal{E}$ is  continuous iff $\mathcal{V}_g: S_\mathcal{E}\to S_\mathcal{E}$ is continuous. We prove by induction on $N$ that we have an inequality:
\begin{equation*}
\|\mathcal{V}_gx\|_N\leq\textrm{const}\|x\|_N,
\end{equation*}
for some constant depending on $N$ and $g$ that we will not track.

The case of $N=0$ is just the boundedness of $\mathcal{V}_g$. For the inductive step we estimate:
\begin{equation}\label{vgest}
\|\mathcal{V}_gx||_{N+1}=\|(I+\K)\mathcal{V}_gx||_{N}\leq \|\mathcal{V}_g(I+\K)x\|_N.+ \|\delta_\K(\mathcal{V}_g)x\|_N.
\end{equation}
Thus, we need to understand the derivation $\delta_\K(\mathcal{V}_g)$. In fact, we have the following formula in terms of $W_g$:
\begin{equation}\label{delta_k_v}
\delta_\K(\mathcal{V}_g)=(\delta_\K(W_g)W_g^*-I)\mathcal{V}_g\K\,.
\end{equation}
To prove it, we apply both sides to basis elements $E_k$. First, notice that we have:
\begin{equation*}
\mathcal{V}_g E_k=(\bar\alpha+\bar\beta U)\,\mathcal{U}_g E_k=(\bar\alpha+\bar\beta U)F_k=(\bar\alpha+\bar\beta U)W_g^kF_0=
W_g^k(\bar\alpha+\bar\beta U)F_0=W_g^kE_0\,.
\end{equation*}
Consequently, since $\K E_k=k E_k$ and $W_g^*W_g=I$, we obtain:
\begin{equation*}
\begin{aligned}
&\delta_\K(\mathcal{V}_g)E_k=\K \mathcal{V}_g E_k-\mathcal{V}_g\K E_k= \K W_g^kE_0 - \mathcal{V}_g\K E_k= \delta_\K(W_g^k)E_0 - \mathcal{V}_g\K E_k  =\\
&=k\delta_\K(W_g)W_g^{k-1}E_0 - \mathcal{V}_g\K E_k=k\delta_\K(W_g)W_g^*W_g^{k}E_0 - \mathcal{V}_g\K E_k=
(\delta_\K(W_g)W_g^*-I)\mathcal{V}_g\K E_k\,.
\end{aligned}
\end{equation*}
Next, since $\delta_\K(U)=U$, we obtain from the definition of $W_g$:
\begin{equation*}
\delta_\K(W_g)=U(\bar\alpha+\bar\beta U)^{-2}\,.
\end{equation*}
Thus, we can rephrase  formula \eqref{delta_k_v} in terms of Toeplitz operators as:
\begin{equation*}
\delta_\K(\mathcal{V}_g)=\left(T(z(\bar\alpha+\bar\beta z)^{-2})T((\bar\alpha \bar z+\bar\beta)(\beta \bar z+\alpha)^{-1})-I\right)\mathcal{V}_g\K\,.
\end{equation*}
Returning to the inductive step \eqref{vgest}, we obtain using Lemma \ref{TfS_lemma} and the inductive hypothesis $\|\mathcal{V}_gx\|_N\leq\textrm{const}\|x\|_N$:
\begin{equation*}
\begin{aligned}
&\|\mathcal{V}_gx||_{N+1}\leq \|\mathcal{V}_g(I+\K)x\|_N + \|\delta_\K(\mathcal{V}_g)x\|_N\leq \textrm{const}\|(I+\K)x\|_N+\textrm{const}\|\mathcal{V}_g\K x\|_N\leq\\
&\leq \textrm{const}\|(I+\K)x\|_N=\textrm{const}\|x\|_{N+1}\,,
\end{aligned}
\end{equation*}
finishing the proof.
\end{proof} 

\section{Remarks on K-Theory, K-Homology and Cyclic Cohomology}
In this section we give a discussion of the $K$-Theory of the smooth subalgebras $C^\infty(\R / \Z )$, $\mathcal{K}^\infty$, and $\mathcal{T}^\infty$. We also give a discussion of the $K$-Homology of their $*$-completions $C(\R / \Z)$, $\mathcal{K}$, and $\mathcal{T}$. First, we investigate the $6$-term exact sequence in $K$-Theory induced by the short exact sequence of the Toeplitz algebra, exhibiting explicit generators of the $K$-Theory, and computing each induced map in $K$-Theory on these generators.  We then compute the $K$-homology of these algebras, finding explicit Fredholm modules whose classes generate the $K$-homology. We also discuss spectral triples and cyclic cohomology for the above mentioned algebras. The geometrical aspects of those otherwise topological considerations lie in constructions of generators of the homology/cohomology groups. Most of the material in this section is well-known, perhaps with the exception of constructions of some Fredholm modules and spectral triples. The section is intended to give a fuller picture of the noncommutative geometry of the quantum disk.

\subsection{Review of $K$-Theory} Note that, by earlier discussions, $\mathcal{K}^\infty$ and $\mathcal{T}^\infty$ are subalgebras of $\mathcal{K}$ and $\mathcal{T}$, respectively, which are closed under the holomorphic calculus. Similarly, this fact is well known for the subalgebra $C^\infty(\R / \Z)$ of $C(\R / \Z)$. Hence, each inclusion induces an isomorphism in $K$-Theory and we have the following proposition summarizing the $K$-Theory groups and generators. 
\begin{prop}
The table below summarizes the $K$-Theory of the smooth subalgebras $C^\infty(\R / \Z ), \mathcal{K}^\infty$, and $\mathcal{T}^\infty$.
\begin{center}
\begin{tabular}{ |c|c|c| } 
 \hline
  & Group & Generator \\ 
 \hline
 $K_0(C^\infty(\R / \Z))$ & $\Z$ & $[1]_0$ \\ 
 \hline
 $K_0(\mathcal{K}^\infty)$ & $\Z$ & $[P_{0,0}]_0$ \\ 
 \hline
  $K_0(\mathcal{T}^\infty)$ & $\Z$ & $[I]_0$ \\ 
 \hline
 $K_1(C^\infty(\R / \Z))$ & $\Z$ & $[z]_1$ \\ 
 \hline
 $K_1(\mathcal{K}^\infty)$ & $0$ &  NA\\ 
 \hline
  $K_1(\mathcal{T}^\infty)$ & $0$ &  NA\\ 
  \hline
\end{tabular}
\end{center}
\end{prop}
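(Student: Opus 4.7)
The plan is to reduce each K-theory computation to that of the corresponding C$^*$-algebra closure, and then invoke either a standard calculation or the six-term exact sequence attached to the Toeplitz extension.

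First, since each of the smooth subalgebras $C^\infty(\R/\Z)$, $\mathcal{K}^\infty$, and $\mathcal{T}^\infty$ was shown earlier to be closed under the holomorphic functional calculus in its C$^*$-completion, the standard density theorem for stable-under-holomorphic-calculus Fr\'echet subalgebras (cf.\ \cite{Bo}) gives that the inclusions
\begin{equation*}
C^\infty(\R/\Z)\hookrightarrow C(\R/\Z),\quad \mathcal{K}^\infty\hookrightarrow\mathcal{K},\quad \mathcal{T}^\infty\hookrightarrow\mathcal{T}
\end{equation*}
induce isomorphisms on $K_0$ and $K_1$. So it suffices to verify the table at the level of C$^*$-algebras, being careful to choose representatives of the generators that already lie in the smooth subalgebras.

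Next, I would invoke the two classical computations. For $C(\R/\Z)\cong C(S^1)$, one has $K_0\cong\Z$ generated by the class of the unit $[1]_0$ and $K_1\cong\Z$ generated by the class $[z]_1$ of the identity function; this can be proved either by the Pimsner–Voiculescu sequence for the crossed product $C(S^1)\cong C^*(\Z)$, or more directly from Bott periodicity together with $K_*(C_0(\R))$. For $\mathcal{K}$, the algebra of compact operators is Morita equivalent to $\C$, so $K_0(\mathcal{K})\cong K_0(\C)=\Z$ generated by any rank-one projection, and $K_1(\mathcal{K})=0$; the rank-one projection $P_{0,0}$ manifestly lies in $\mathcal{K}^\infty$, making it a valid generator of $K_0(\mathcal{K}^\infty)$.

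Finally, for $\mathcal{T}$ I would apply the six-term exact sequence to the Toeplitz extension
\begin{equation*}
0\to\mathcal{K}\to\mathcal{T}\xrightarrow{q} C(\R/\Z)\to 0.
\end{equation*}
Plugging in the K-groups already known for $\mathcal{K}$ and $C(\R/\Z)$, the sequence reads
\begin{equation*}
\begin{CD}
\Z @>>> K_0(\mathcal{T}) @>q_*>> \Z\\
@AAA @. @VV\partial V\\
\Z @<<q_*< K_1(\mathcal{T}) @<<< 0,
\end{CD}
\end{equation*}
where the connecting map $\partial\colon K_1(C(S^1))\to K_0(\mathcal{K})$ is the classical index map, which sends $[z]_1$ to minus the Fredholm index of the Toeplitz operator $T(z)=U$; since $U$ is the unilateral shift with index $-1$, this map is the isomorphism $\Z\to\Z$ sending $1\mapsto 1$ (up to sign). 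Exactness then forces $K_1(\mathcal{T})=0$ and identifies $K_0(\mathcal{T})\cong\Z$ generated by $[I]_0$, since $q_*$ kills the $K_0(\mathcal{K})$ part and sends $[I]_0$ to $[1]_0$. The main (minor) obstacle is really just bookkeeping: making sure the chosen generators $[1]_0$, $[P_{0,0}]_0$, $[I]_0$, and $[z]_1$ all lie in the smooth subalgebras, which is immediate since $1\in C^\infty(\R/\Z)$, $P_{0,0}\in\mathcal{K}^\infty$, $I\in\mathcal{T}^\infty$, and $z=e^{2\pi ix}\in C^\infty(\R/\Z)$.
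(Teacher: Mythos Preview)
Your proposal is correct and follows essentially the same approach as the paper: both reduce to the C$^*$-algebra level via stability under holomorphic calculus (established earlier in the paper for $\mathcal{K}^\infty$ and $\mathcal{T}^\infty$, and classical for $C^\infty(\R/\Z)$), and then invoke the standard K-theory computations. The paper in fact gives no formal proof beyond the reduction to the C$^*$-algebras, simply taking $K_*(\mathcal{T})$ as known; your use of the six-term exact sequence to actually derive $K_*(\mathcal{T})$ from $K_*(\mathcal{K})$ and $K_*(C(\R/\Z))$ is more detailed than what the paper provides, and matches the index-map computation the paper carries out immediately after the proposition.
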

Here we identify $\R / \Z$ with the circle $S^1=\{\zeta\in\C:|\zeta|=1\}$ and the function $z\in C^\infty(\R / \Z)$ in the above table is the identity function $z(\zeta)=\zeta\in\C$.

Note that we have the following short exact sequence of subalgebras 
\begin{equation*}
\begin{tikzcd}
0 \arrow{r} & \mathcal{K}^\infty \arrow{r}{\iota} & \mathcal{T}^\infty \arrow{r}{q} & C^\infty(\R / \Z) \arrow{r} & 0,  \\
\end{tikzcd}
\end{equation*}
which induces a $6$-term exact sequence in $K$-Theory 
\begin{equation*}
\begin{tikzcd}
 K_0(\mathcal{K}^\infty) \arrow{r}{K_0(\iota)}  & K_0(\mathcal{T}^\infty)  \arrow{r}{K_0(q)} & K_0(C^\infty(\R / \Z))  \arrow{d}{\exp}  \\
 K_1(C^\infty(\R / \Z))\arrow{u}{\textrm{ind}} & K_1(\mathcal{T}^\infty) \arrow{l}{K_1(q)} & K_1(\mathcal{K}^\infty). \arrow{l}{K_1(\iota)}
\end{tikzcd}
\end{equation*}
Due to the vanishing of $K_1(\mathcal{K}^\infty)$ and $K_1(\mathcal{T}^\infty)$, the induced maps $K_1(q),K_1(\iota)$, $\exp$ are all trivial homomorphisms. It is simple to compute 
$$K_0(q)[I]_0 = [q(I)]_0 = [1]_0.$$ 
By exactness, $K_0(\iota)$ is trivial. The connecting homomorphism, the index map:
\begin{equation*}
\textrm{ind}: K_1(C^\infty(\R / \Z )) \to K_0(\mathcal{K}^\infty)
\end{equation*}
can be computed as follows. Note that the generator of $K_1(C^\infty(\R / \Z))$, the identity function, lifts to the element $U$ in $\mathcal{T}^\infty$. Using Proposition $9.2.4$ in \cite{RLL}, we have that 
$$\textrm{ind}([z]_1) = [I - U^*U]_0 - [I - UU^*]_0 = -[P_{0,0}]_0.$$ 
This completes our computation of the induced maps in $K$-Theory.

\subsection{$K$-Homology Groups } Recall that the definition of an odd Fredholm module over a $*$-algebra $A$ is a triple $(H, \rho , F)$ where $H$ is a Hilbert space, $\rho: A \to B(H)$ is a $*$-representation, and $F \in B(H)$ satisfies 
$$F^*- F,\ I - F^2 \in \mathcal{K}(H),\textrm{ and }[F, \rho(a)] \in \mathcal{K}(H)$$ 
for all $a \in A$. Similarly, an even Fredholm module is the above information, together with a $\Z_2$-grading $\Gamma$ of $H$, such that 
$$\Gamma^* = \Gamma,\ \Gamma^2 = I,\ \Gamma F = - F \Gamma,\ \textrm{and  }\Gamma \rho(a) = \rho(a) \Gamma$$ 
for all $a \in A$.  Classes of odd Fredholm modules over $A$ generate $K^1(A) : = KK^1(A, \mathbb{C})$, while classes of even Fredholm modules over $A$ generate $K^0(A) : = KK^0(A, \mathbb{C})$. The unbounded analogues of Fredholm modules are called spectral triples, which are important tools in noncommutative geometry and give rise to a noncommutative generalization of a Riemannian spin manifold. In what follows, we compute the $K$-homology of each C$^*$-algebra in the short exact sequence of the Toeplitz algebra, exhibiting generating Fredholm modules in each case. We also give several remarks regarding spectral triples as unbounded versions of specific Fredholm modules. 

All algebras being studied fall into the Bootstrap class \cite{BB}, and hence the Universal Coefficient Theorem of Rosenberg and Schochet \cite{RS UCT} states that we have an exact sequence 
\begin{equation*}
    \begin{tikzcd}
    0 \arrow{r} & \Ext^1_\Z(K_1(A), \Z) \arrow{r} & K^0(A) \arrow{r} & \Hom(K_0(A),\Z) \arrow{r} & 0,
    \end{tikzcd}
\end{equation*}
where $A$ is any one of $C(\R / \Z), \mathcal{T}, \mathcal{K}$. Similarly, we have an exact sequence 
\begin{equation*}
    \begin{tikzcd}
    0 \arrow{r} & \Ext^1_\Z(K_0(A), \Z) \arrow{r} & K^1(A) \arrow{r} & \Hom(K_1(A),\Z) \arrow{r} & 0.
    \end{tikzcd}
\end{equation*}
Since $\Ext^1_\Z(\Z, \Z) \cong 0, \textrm{ and } \Ext^1_\Z(0, \Z) \cong 0$, it follows immediately that we have the following proposition summarizing the $K$-homology groups. 
\begin{prop}
The $K$-homology groups of $C(\R / \Z), \mathcal{K}$, and  $\mathcal{T}$ are summarized in the table below. 
\begin{center}
\begin{tabular}{ |c|c| } 
 \hline
 $K^0(C(\R / \Z))$ & $\Z$ \\ 
 \hline
 $K^0(\mathcal{K})$ & $\Z$ \\ 
 \hline
  $K^0(\mathcal{T})$ & $\Z$  \\ 
 \hline
 $K^1(C(\R / \Z))$ & $\Z$ \\ 
 \hline
 $K^1(\mathcal{K})$ & $0$ \\ 
 \hline
  $K^1(\mathcal{T})$ & $0$ \\ 
  \hline
\end{tabular}
\end{center}
\end{prop}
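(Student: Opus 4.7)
The plan is to read off each entry of the table directly from the two Universal Coefficient Theorem (UCT) exact sequences already written down just above the proposition. Since every algebra in sight lies in the Bootstrap class and the K-theory groups of $C(\R/\Z)$, $\mathcal{K}$, and $\mathcal{T}$ have already been identified (via the fact that $C^\infty(\R/\Z)$, $\mathcal{K}^\infty$, and $\mathcal{T}^\infty$ are closed under holomorphic functional calculus, so the inclusion into the respective C$^*$-closures induces isomorphisms in K-theory), the computation is purely algebraic once those inputs are in hand.

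First I would record the K-theory side: $K_0(C(\R/\Z))\cong K_1(C(\R/\Z))\cong \Z$, $K_0(\mathcal{K})\cong\Z$, $K_1(\mathcal{K})=0$, and $K_0(\mathcal{T})\cong\Z$, $K_1(\mathcal{T})=0$, each of these being immediate from the corresponding statements for the smooth subalgebras proved earlier. Next I would note the two trivial Ext computations $\Ext^1_\Z(\Z,\Z)=0$ and $\Ext^1_\Z(0,\Z)=0$ and the trivial Hom computations $\Hom(\Z,\Z)\cong\Z$ and $\Hom(0,\Z)=0$.

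Then I would go case by case through the UCT sequences. For $A = C(\R/\Z)$, both UCT sequences have vanishing Ext term and $\Hom$ equal to $\Z$, so each of $K^0(A)$ and $K^1(A)$ is sandwiched between $0$ and $\Z$ and is therefore $\Z$. For $A=\mathcal{K}$, the $K^0$ sequence gives $0\to K^0(\mathcal{K})\to \Hom(\Z,\Z)\to 0$ since $K_1(\mathcal{K})=0$ makes the Ext term vanish, yielding $K^0(\mathcal{K})\cong\Z$; the $K^1$ sequence has $\Hom(0,\Z)=0$ on the right and $\Ext^1_\Z(\Z,\Z)=0$ on the left, forcing $K^1(\mathcal{K})=0$. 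For $A=\mathcal{T}$ the situation is literally identical, because $\mathcal{T}$ has the same K-theory groups as $\mathcal{K}$.

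There is no genuine obstacle here: the proof is essentially a table lookup once one invokes UCT. The only point worth flagging is the verification that all three C$^*$-algebras lie in the Bootstrap class, which I would handle by a one-line citation to \cite{BB} (indeed $C(\R/\Z)$ is commutative and separable, $\mathcal{K}$ is type I, and $\mathcal{T}$ sits in a Bootstrap-class extension $0\to\mathcal{K}\to\mathcal{T}\to C(\R/\Z)\to 0$). After that the six entries follow mechanically from splitting the two UCT sequences.
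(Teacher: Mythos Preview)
Your proposal is correct and follows essentially the same approach as the paper: invoke the UCT for algebras in the bootstrap class, note that the relevant $\Ext^1_\Z$ terms vanish, and read off the $K$-homology groups from the resulting isomorphisms with $\Hom(K_*(A),\Z)$. You spell out each case and the bootstrap verification more explicitly than the paper does, but the argument is the same.
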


\subsection{Fredholm Modules and Spectral Triples} 
We now construct explicit Fredholm modules for each nontrivial $K$-homology group above, and verify via Connes' index theorem that each Fredholm module is a generator for the corresponding $K$-homology group. 
\subsubsection{Fredholm Modules over $C(\R / \Z)$}
Consider the finite dimensional $*$-representation $\rho_0: C(\R / \Z) \to B(\{0\} \oplus \C)$ given by 
$$\rho_0(f) = 0 \oplus f(1).$$ 
Since the Hilbert space $\{0\} \oplus \C$ is finite dimensional, all necessary conditions for $F$ in the definition of Fredholm module are clearly satisfied by 
$$F_0 = \begin{bmatrix}0 & 0 \\ 0 & 0 \end{bmatrix}.$$ 
Equipped with the grading operator 
$$\Gamma = \begin{bmatrix}0 & 0 \\ 0 & -1 \end{bmatrix},$$ 
it follows that $(\{0\} \oplus \C, \rho_0, F_0, \Gamma)$ as above is an even Fredholm module over $C(\R /\Z)$. The following proposition verifies that the class of this Fredholm module generates the even $K$-homology of $C(\R / \Z)$. 
\begin{prop}\label{ev FM circle}
The following triple 
\begin{equation*}
\left(\{0\} \oplus \C, \rho_0, F_0\right)
\end{equation*}
together with grading operator $\Gamma$ defines an even Fredholm module over $C(\R /\Z)$ whose class generates the  even $K$-homology $K^0(C(\R / \Z)) \cong \mathbb{Z}.$
\end{prop}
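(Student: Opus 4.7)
The plan is to verify the Fredholm module axioms in this (essentially one-dimensional) setting, then reduce the generator claim via the UCT to a single index pairing, and finally compute that pairing by Connes' index formula.

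First, since $H=\{0\}\oplus\C$ is finite dimensional, every operator on $H$ is compact, so the axioms
\begin{equation*}
F_0^*-F_0\in\mathcal{K}(H),\qquad I-F_0^2\in\mathcal{K}(H),\qquad [F_0,\rho_0(f)]\in\mathcal{K}(H)
\end{equation*}
are automatic (indeed all three vanish algebraically for $F_0=0$). Reading $\Gamma$ as the scalar $+1$ on the zero-dimensional summand and $-1$ on $\C$ makes $\Gamma^*=\Gamma$ and $\Gamma^2=I$ obvious; the compatibility $\Gamma F_0=-F_0\Gamma=0$ is trivial, and $\Gamma\rho_0(f)=\rho_0(f)\Gamma$ holds because $\rho_0(f)=0\oplus f(1)$ lies in the summand where $\Gamma$ is a scalar. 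Hence $(H,\rho_0,F_0,\Gamma)$ is an even Fredholm module over $C(\R/\Z)$.

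Next, I invoke the UCT sequence stated in the paper. Since $K_1(C(\R/\Z))\cong\Z$ and $\Ext^1_\Z(\Z,\Z)=0$, evaluation on $K_0$ gives an isomorphism
\begin{equation*}
K^0(C(\R/\Z))\;\xrightarrow{\;\cong\;}\;\Hom\bigl(K_0(C(\R/\Z)),\Z\bigr)\;\cong\;\Z,
\end{equation*}
realized by the Connes index pairing $\langle\,\cdot\,,\,\cdot\,\rangle$. Therefore, to prove that $[M]$ is a generator of $K^0(C(\R/\Z))$, it suffices to verify that $\langle [M],[1]_0\rangle=\pm 1$ on the generator $[1]_0\in K_0(C(\R/\Z))$.

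Finally, Connes' index formula identifies $\langle [M],[1]_0\rangle$ with the Fredholm index of the compression $\rho_0(1)\,F_0^{+}\,\rho_0(1):\rho_0(1)H^+\to\rho_0(1)H^-$, where $F_0^{+}$ is the off-diagonal part of $F_0$ with respect to $\Gamma$. In our situation $H^+=\{0\}$, $H^-=\C$, and $\rho_0(1)$ acts as the identity on $H^-$, so this compression is the zero map from a zero-dimensional space into $\C$; its kernel is trivial and its cokernel is one-dimensional, giving index $-1$. Equivalently, because $F_0=0$ and $H$ is finite dimensional the module is zero-dimensional, and its Chern character collapses to the supertrace $\Tr(\Gamma\rho_0(\,\cdot\,))$, which on $[1]_0$ gives $\Tr(\Gamma\rho_0(1))=-1$. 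Either way, the pairing equals $\pm 1$, so $[M]$ generates $K^0(C(\R/\Z))$. The only mild subtlety is that the degenerate choice $F_0=0$ makes $I-F_0^{2}=I$ rather than $0$, but in finite dimensions this is harmless and the index formula still applies; that is the step I would write out most carefully.
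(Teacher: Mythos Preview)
Your proposal is correct and follows essentially the same approach as the paper: verify the Fredholm-module axioms by finite-dimensionality, then show the class generates by computing the Connes index pairing with $[1]_0$ to be $\pm 1$. The only cosmetic differences are that you justify the ``pairing $=\pm 1$ implies generator'' step explicitly via the UCT (the paper just cites Connes), and that you take the compression $H^+\to H^-$ rather than $H^-\to H^+$, yielding $-1$ where the paper gets $+1$; this is a harmless sign convention and does not affect the conclusion.
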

\begin{proof}
That $(\{0\} \oplus \C, \rho_0, F_0, \Gamma)$ is an even Fredholm module is verified in the preceding paragraph. To show its class generates $K^0(C(\R / \Z))$, it suffices to show that the index pairing with the generator of $K$-theory $[1]_0$ is $1$ \cite{Connes}. By the Connes' index theorem \cite{Connes}, this pairing is computed as the index of the $0$ operator $ \textrm{Ran} (1) = \C \to \{0\} $. The index of such an operator between finite dimensional Hilbert spaces is simply $\dim \C - \dim \{0\} = 1$. This completes the proof. 
\end{proof}

We now construct an odd Fredholm module over $C(\R / \Z)$ whose class generates the odd $K$-homology $K^1(C(\R / \Z)) \cong \Z$. Define a $*$-representation $\rho_1: C(\R/ \Z) \to \ell^2(\Z)$ by 
$$\rho_1(f) = f(V),$$ 
where $V$ denotes the bilateral shift operator 
$$VE_k = E_{k+1}.$$ 
Define Fredholm operator $F_1: \ell^2(\Z) \to \ell^2(\Z)$ by 
$$
F_1E_k = 
\left\{
\begin{aligned}
 & E_k \qquad \textrm{if } k\geq 0 \\
 -& E_k \qquad \textrm{if } k<0\,.
\end{aligned}\right.
$$
We have the following proposition.
\begin{prop}
The triple $(\ell^2(\Z), \rho_1 , F_1)$ defines an odd Fredholm module over $C(\R / \Z)$ whose class generates the odd $K$-homology $K^1(C(\R / \Z))$. 
\end{prop}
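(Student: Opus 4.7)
The plan has two parts: verify that $(\ell^2(\Z),\rho_1,F_1)$ satisfies the odd Fredholm module axioms, then identify its $K$-homology class by computing its index pairing with the generator $[z]_1 \in K_1(C(\R/\Z))$.

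For the first part, $\rho_1$ is a bona fide $*$-representation because $V$ is unitary with spectrum equal to $S^1 \cong \R/\Z$, so $\rho_1(f) = f(V)$ is just the continuous functional calculus. The operator $F_1$ is manifestly self-adjoint and satisfies $F_1^2 = I$, so the axioms $F_1^* - F_1 \in \mathcal{K}$ and $I - F_1^2 \in \mathcal{K}$ hold trivially. The only substantive condition is $[F_1, \rho_1(f)] \in \mathcal{K}(\ell^2(\Z))$. I would verify it first on monomials: since $F_1$ equals $+1$ on $\textrm{span}\{E_k\}_{k\geq 0}$ and $-1$ on $\textrm{span}\{E_k\}_{k<0}$ while $V^n$ shifts indices by $n$, a direct calculation shows that $[F_1,V^n]$ vanishes off the span of $\{E_k : -n \leq k < 0\}$ (for $n>0$, similarly for $n<0$), so it has rank $|n|$ and operator norm $2$ for every $n \neq 0$. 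Consequently $[F_1,\rho_1(p)]$ is a finite-rank operator for every trigonometric polynomial $p$. By Stone--Weierstrass the trigonometric polynomials are dense in $C(\R/\Z)$, and from $\|[F_1, \rho_1(f-p)]\| \leq 2\|f-p\|_\infty$ one concludes that $[F_1,\rho_1(f)]$ is a norm limit of compact operators, hence compact.

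For the second part, by the UCT sequence it suffices to show that the pairing of $[(\ell^2(\Z),\rho_1,F_1)]$ with the generator $[z]_1$ of $K_1(C(\R/\Z))$ is $\pm 1$. Following the recipe used in Proposition \ref{ev FM circle}, Connes' index theorem identifies this pairing with the Fredholm index of the compression $P_+ \rho_1(z) P_+$ acting on $\textrm{Ran}(P_+)$, where $P_+ := (I + F_1)/2$ is the orthogonal projection onto $\ell^2(\Z_{\geq 0}) \subset \ell^2(\Z)$. Since $\rho_1(z) = V$, the identification $\textrm{Ran}(P_+) \cong H$ sends this compression to precisely the unilateral shift $U$ on $H$. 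Because $\ker U = \{0\}$ and $\ker U^* = \C \cdot E_0$, the index equals $0 - 1 = -1$, which generates $\Z$; hence $[(\ell^2(\Z),\rho_1,F_1)]$ generates $K^1(C(\R/\Z))$.

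There is no serious obstacle: the compact-commutator verification is clean once reduced to monomials via Stone--Weierstrass, and the index computation is the classical Toeplitz index theorem, which identifies our triple with the standard generator of $K^1(S^1)$.
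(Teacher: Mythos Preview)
Your proposal is correct and follows essentially the same approach as the paper: verify $F_1=F_1^*$ and $F_1^2=I$, check that $[F_1,\rho_1(f)]$ is finite rank on trigonometric polynomials and extend by density, then compute the index pairing with $[z]_1$ via Connes' index theorem as the index of the compressed shift $P_+VP_+$, obtaining $-1$. Your write-up is in fact slightly more careful than the paper's, since you make the Stone--Weierstrass density step and the norm bound $\|[F_1,\rho_1(f-p)]\|\leq 2\|f-p\|_\infty$ explicit, whereas the paper passes from polynomials to general $f$ without comment.
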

\begin{proof}
$F_1$ is clearly self adjoint, and its square is precisely the identity.  Moreover, $[F_1, \rho(f)]$ is easily seen to be of finite rank at most $n$ for any polynomial $f$ of degree $n$. Hence, $[F_1, \rho(f)] \in \mathcal{K}(\ell^2(\Z))$ for any $f \in C(\R / \Z)$. To see that this Fredholm module generates the odd $K$-homology, it suffices show that its pairing with the generator $[z]_1$ of $K_1(C(\R / \Z))$ is $\pm 1$. By the Connes' index theorem \cite{Connes}, this is computed as the index of the operator
$$P_{\geq 0}\rho_1(z)P_{\geq 0} = P_{\geq 0} U P_{\geq 0},$$ 
where 
$$P_{\geq 0} = \frac{1}{2}(I + F_1)$$ is the spectral projection of $F_1$. This is easily seen to be equal to
$$\textrm{dim ker}(P_{\geq 0} U P_{\geq 0}) - \textrm{dim ker}(P_{\geq 0} U^* P_{\geq 0}) = 0 -1 = -1$$ 
completing the proof.
\end{proof}

\subsubsection{Spectral Triples over $C(\R / \Z)$}
If in the above triple, the bounded operator $F$ is replaced with unbounded label operator $\mathbb{K}E_k = kE_k$, then we obtain an odd spectral triple $(C^\infty(\R / \Z) , \ell^2(\Z), \mathbb{K})$ over $C(\R / \Z)$. Indeed, this follows from the simple observation that $[\K, \rho(f)]$ is bounded for all $f$ in the dense subalgebra of smooth functions on $\R / \Z$.  

Also note that when the representation is given by $\rho:A \to B(H_{\textrm{fin}})$ for a finite dimensional Hilbert space $H_{\textrm{fin}}$, the notions of spectral triple and Fredholm modules coincide. Hence, the above even Fredholm module also defines a spectral triple on $C(\R / \Z)$. 

\subsubsection{Fredholm Modules over $\mathcal{K}$} 
Since the odd $K$-homology of $\mathcal{K}$ is trivial, we see there are no topologically nontrivial odd Fredholm modules over $\mathcal{K}$. For the discussion of even Fredholm modules, since $\mathcal{K}$ is nonunital it will be more convenient to consider the unitization $\mathcal{K}^+$. The even $K$-theory of $\mathcal{K}^+$ is given by 
$$K_0(\mathcal{K}^+) = \Z \oplus \Z$$ 
with generators $[I]_0$ and $[P_{0,0}]_0$. In what follows we construct a Fredholm module that pairs nontrivially with the generator $[P_{0,0}]_0$, but trivially with the generator of the unitization part $[I]_0$. First, recall that here we study the algebra of compact operators $\mathcal{K}(H)$ for some infinite, separable Hilbert space $H$, equipped with an orthonormal basis $\{E_k\}_{k \geq 0}$. Consider two operators, $U_1$ and $U_2$, acting on $H$ and satisfying
$$
U^*_i U_j = \delta_{i,j}I, \quad U_1U_1^* + U_2U_2^* = I.
$$
We remark that the universal C$^*$-algebra generated by such operators is well known and is of independent interest \cite{EL}. Define $H_{ev} = H$, and $H_{odd} = H \oplus H$. Let $\rho_{ev}: \mathcal{K}^+(H) \to B(H_{ev})$ be the canonical representation, so that 
$$\rho_{ev}(P_{i,j}) E_{k} = \chi_{j,k}E_i.$$ 
Let $\rho_{odd}: \mathcal{K}^+(H) \to B(H_{odd})$ be given by 
$$\rho_{odd} = \rho_{ev} \oplus \rho_{ev}.$$  
Define an operator $G^*: H_{ev} \to H_{odd}$ by
$$
G^*(x) = \begin{bmatrix} U_1^*x \\ U_2^*x
\end{bmatrix},
$$
with adjoint $G: H_{odd} \to H_{ev}$ given by
$$
G \begin{bmatrix} x \\ y
\end{bmatrix} = U_1(x) + U_2(y).
$$
We have the following proposition.
\begin{prop}
The triple
$$
\left(H_{ev} \oplus H_{odd}, \rho_{ev} \oplus \rho_{odd}, F_2 = \begin{bmatrix} 0 & G \\G^* & 0 \end{bmatrix} \right)
$$ 
together with grading operator 
$$
\begin{bmatrix} 1 & 0 \\ 0 & -1 \end{bmatrix}
$$
defines an even Fredholm module over $\mathcal{K}^+$ whose class in $K^0(\mathcal{K^+}) \cong K^0(\mathcal{K}) \oplus \Z$ generates  $K^0(\mathcal{K})$. 
\end{prop}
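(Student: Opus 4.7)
The plan is to verify the Fredholm module axioms first, then compute the index pairing with the known generator $[P_{0,0}]_0$ of $K_0(\mathcal{K})$ and confirm it equals $\pm 1$.

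For the axioms, I would begin by computing $F_2^2$ using the given relations for $U_1, U_2$. The diagonal blocks of $F_2^2$ are $GG^*$ acting on $H_{ev}$ and $G^*G$ acting on $H_{odd}$. The relation $U_1 U_1^* + U_2 U_2^* = I$ gives $GG^* = I$, and the Cuntz-type relations $U_i^* U_j = \delta_{i,j} I$ give that the four entries of the $2\times 2$ block matrix $G^*G$ are $U_i^* U_j$, yielding $G^*G = I$ as well. Hence $F_2^2 = I$ exactly, and self-adjointness follows from the block structure of $F_2$ with off-diagonal entries $G$ and $G^*$. For the commutator condition, note that for $a \in \mathcal{K}$ both $\rho_{ev}(a)$ and $\rho_{odd}(a)$ are compact, so $[F_2, \rho_{ev}(a) \oplus \rho_{odd}(a)]$ is a combination of products of bounded and compact operators and therefore compact; extending to $\mathcal{K}^+$ by linearity uses that $F_2$ commutes with $\rho(\lambda I) = \lambda I$. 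The grading anti-commutes with $F_2$ because the grading is diagonal while $F_2$ is purely off-diagonal, and commutes with $\rho_{ev}(a) \oplus \rho_{odd}(a)$ because $\rho$ is block diagonal.

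Next, to show the class generates $K^0(\mathcal{K})$, I would pick a concrete realization of $U_1, U_2$ satisfying the Cuntz relations, e.g.\ $U_1 E_k = E_{2k}$ and $U_2 E_k = E_{2k+1}$, and apply Connes' index theorem: the pairing with $[P_{0,0}]_0$ equals the Fredholm index of the compression
\[
(\rho_{odd}(P_{0,0})) \, G^* \, (\rho_{ev}(P_{0,0})) : \rho_{ev}(P_{0,0}) H_{ev} \to \rho_{odd}(P_{0,0}) H_{odd}.
\]
With this choice of $U_1, U_2$, the source is one-dimensional (spanned by $E_0$), the target is two-dimensional (spanned by $(E_0, 0)$ and $(0, E_0)$), and $G^* E_0 = (U_1^* E_0, U_2^* E_0) = (E_0, 0)$. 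The compressed operator is therefore injective with one-dimensional cokernel, giving index $-1$.

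The main technical point, and the one I would be most careful about, is the index computation: one must confirm that the formula from Connes' index theorem applies with the correct grading convention (so that $G^*$ is the piece $F_{+-}$ mapping $P_+ H$ to $P_- H$), and that independence of the choice of $U_1, U_2$ is guaranteed because any two such pairs of operators determine unitarily equivalent Fredholm modules, so the $K$-homology class is well-defined. Once the pairing $\langle [F_2], [P_{0,0}]_0 \rangle = \pm 1$ is established, and since $K^0(\mathcal{K}) \cong \Z$ is freely generated (by the UCT computation already performed above), it follows that $[F_2]$ restricted to the $K^0(\mathcal{K})$ summand of $K^0(\mathcal{K}^+)$ is a generator, as claimed.
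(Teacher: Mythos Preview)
Your approach is essentially the same as the paper's: verify $F_2^2=I$ and self-adjointness from the Cuntz relations, check compactness of commutators using that $\mathcal{K}$ is an ideal, then compute the index pairing with $[P_{0,0}]_0$ via Connes' index theorem. Two remarks are in order.

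First, your concrete realization $U_1E_k=E_{2k}$, $U_2E_k=E_{2k+1}$ is harmless but unnecessary. The paper observes that the compressed operator maps between finite-dimensional spaces of dimensions $2$ and $1$, so its index is $\pm 1$ by the rank--nullity theorem alone, independent of any formula for $U_1,U_2$. You do not need to pick representatives or argue unitary equivalence of different choices.

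Second, and more substantively, you omit a step the paper carries out: checking that the pairing with $[I]_0$ vanishes. The proposition asserts that the class \emph{generates} $K^0(\mathcal{K})$ inside $K^0(\mathcal{K}^+)\cong K^0(\mathcal{K})\oplus\Z$, which requires the class to lie in that summand, not merely to project onto a generator of it. Your conclusion that ``$[F_2]$ restricted to the $K^0(\mathcal{K})$ summand is a generator'' is correct but weaker than what is claimed. The paper therefore also computes $\langle [I]_0,[F_2]\rangle$ as the index of $G:H_{odd}\to H_{ev}$ itself, which is zero since $G$ is surjective (as $GG^*=I$) and injective (apply $U_1^*$ and $U_2^*$ to the equation $U_1x+U_2y=0$). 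You should add this short computation to complete the argument.
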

\begin{proof}
$F_2$ is clearly self adjoint, and its square is easily computed to be the identity:
$$
\begin{bmatrix} 0 & G \\G^* & 0 \end{bmatrix}\begin{bmatrix} 0 & G \\G^* & 0 \end{bmatrix} \begin{bmatrix} x \\ y \\ z \end{bmatrix} = \begin{bmatrix} G \begin{pmatrix} U_1^* x \\ U_2^* x \end{pmatrix} \\ G^*(U_1(y) + U_2(z)) \end{bmatrix} = \begin{bmatrix} U_1U_1^*x + U_2U_2^*x \\ U_1^*U_1(y) + U^*_1U_2(z) \\ U_2^*U_1(y) + U_2^*U_1(z) \end{bmatrix} = \begin{bmatrix} x \\ y \\ z  \end{bmatrix}.
$$
Since compacts are an ideal of $B(H_{ev})$, $B(H_{odd})$, and $\rho_{ev}$, $\rho_{odd}$ have ranges of the form $ \textrm{compact} + I$, it follows that the commutator $[F_2, (\rho_{ev} \oplus \rho_{odd})(a)]$ is compact for any $a \in \mathcal{K}^+$. Hence, the above indeed defines an even Fredholm module over $\mathcal{K}^+$. It only remains to check that the class of this Fredholm module pairs nontrivially with the nonunital part $K^0(\mathcal{K})$ of $K^0(\mathcal{K}^+)$, and trivial with $[I]_0$. Again by Connes' index theorem, this is reduced to computing two indices. We first check that the pairing with the unital part $[I]_0$ is trivial. Indeed, 
$$
\langle [I]_0 , (H_{ev} \oplus H_{odd}, \rho_{ev} \oplus \rho_{odd}, F_2) \rangle = \textrm{Index}(\rho_{ev}(I) G \rho_{odd}(I)).
$$
It is clear that the above index is $0$. Indeed, for any $x \in H$ we have 
$$G \begin{bmatrix} U_1^*x \\ U_2^*x \end{bmatrix} = x,$$ 
so $G$ has no cokernel. To see $G$ has no kernel, applying $U^*_1$ on the left to the equation
$$G \begin{bmatrix} x \\ y \end{bmatrix} = 0$$ 
shows that $x = 0$, while applying $U_2^*$ on the left to the above relation shows $y = 0$.  This shows that 
$$
\langle [I]_0 , (H_{ev} \oplus H_{odd}, \rho_{ev} \oplus \rho_{odd}, F_2) \rangle = 0.
$$

We now compute the pairing $\langle [P_{0,0}]_0 , (H_{ev} \oplus H_{odd}, \rho_{ev} \oplus \rho_{odd}, F_2) \rangle$. Again, by the Connes' index theorem we have 
$$
\langle [P_{0,0}]_0 , (H_{ev} \oplus H_{odd}, \rho_{ev} \oplus \rho_{odd}, F_2) \rangle = \textrm{Index}(\rho_{ev}(P_{0,0}) G \rho_{odd}(P_{0,0})), 
$$
where we consider above the restricted operator
$$\rho_{ev}(P_{0,0}) G \rho_{odd}(P_{0,0}): \textrm{Ran}( \rho_{odd}(P_{0,0})) \to \textrm{Ran}(\rho_{ev}(P_{0,0}))\,.$$ 
Since  
$$\textrm{Ran}( \rho_{odd}(P_{0,0})) = \textrm{span} \{ E_0 \oplus 0, 0 \oplus E_0 \}\textrm{ and }\textrm{Ran}(\rho_{ev}(P_{0,0}) ) = \textrm{span}\{ E_0\}$$
are finite dimensional, it follows that 
$$
\langle [P_{0,0}]_0 , (H_{ev} \oplus H_{odd}, \rho_{ev} \oplus \rho_{odd}, F_2) \rangle = 2 - 1 = 1,
$$
and hence the above Fredholm module generates the part of $K^0(\mathcal{K}^+)$ corresponding to $K^0(\mathcal{K})$. This completes the proof. 
\end{proof}

\subsubsection{Spectral triples over $\mathcal{K}$}
 In \cite{KMP1}, the authors of this paper claimed to construct an even spectral triple over $\mathcal{T}$. Due to a subtlety in the definition of a spectral triple, however, this is not true. It does follow, however, that the triple $((\mathcal{K}^\infty)^+, H_{ev} \oplus H_{odd}, \mathcal{D})$, where $\mathcal{D}$ is defined in \cite{KMP1},  does indeed define a spectral triple over $\mathcal{K}^+$, and this triple pairs nontrivially with the class $[P_{0,0}]_0$ in $K_0(\mathcal{K}^+)$ as will be verified below. 

Let $H_w$ be the Hilbert space consisting of infinite series of operators 
\begin{equation}\label{f_expansion}
f = \sum_{n\geq 0}U^n f_n(\K)+\sum_{n< 0}f_n(\K)(U^*)^{-n} \:
\end{equation} 
satisfying:
\begin{equation*}
\|f\|_w^2 = \sum_{n \geq 0} \sum_{k \geq 0} w(k)|f_n(k)|^2 + \sum_{n < 0} \sum_{k \geq 0} w(k-n)|f_n(k)|^2 < \infty
\end{equation*}
where $w(k) > 0$ for all $k \in \Z_{\geq 0}$ and 
$$\sum_{k=0}^{\infty}{w(k)}=1.$$ 

Consider an operator 
$$D:H_w\supseteq \textrm{dom}(D) \to H_{w'}$$
on its maximal domain in $H_w$:
\begin{equation*}
\textrm{dom}(D) = \left\{ f \in H_w\subseteq\mathcal{F}: Df \in H_{w'} \right\}. 
\end{equation*}
given by the formula:
\begin{equation*}
Df = U\beta(\K)f - fU\alpha(\K),
\end{equation*} 
where $\beta(k)$ is a sequence such that:
\begin{equation*}
\lim_{k \to \infty}\beta(k+1) - \beta(k) := \beta_\infty\ne 0.
\end{equation*}
and where $\alpha(k)$ is a sequence such that:
\begin{equation} \label{one}
\sum_{k=0}^\infty|\beta(k) - \alpha(k)|^2w'(k) < \infty.
\end{equation}
We assume that for every $k$ we have:
\begin{equation} \label{three}
    \alpha(k), \beta(k)\neq 0.
\end{equation}
It is convenient to write $\alpha(k)$ as:
    \begin{equation} \label{four}
        \alpha(k) = \beta(k)\frac{\mu(k+1)}{\mu(k)} \textrm{   where   } \mu(0) = 1.
    \end{equation}
Additional assumptions are as follows:
\begin{itemize}
\item 
\begin{equation} \label{five}
\left|\frac{\beta(k) \cdots \beta(k+n)}{\beta(j) \cdots \beta(j+n)}\right| \leq \textrm{const} \textrm{ for all } k \leq j\textrm{, } n\in\Z_{\geq0},
\end{equation}
\item
\begin{equation} \label{six}
\sum_{k=0}^{\infty}\sum_{j=0}^{\infty}\frac{1}{(\textrm{max}\,(j,k) + 1)^2}\left|\frac{\mu(j)}{\mu(k)}\right|^2\frac{w(k)}{w'(j)} < \infty,
\end{equation}
\item
\begin{equation} \label{seven}
\textrm{there exists N such that }
\sum_{k=0}^{\infty} \frac{(1+k)^{2n}}{|\mu(k)|^{2}}w(k) <\infty\textrm{ for } n<N 
\textrm{ and infinite for } n\geq N. 
\end{equation}
\end{itemize}
Then $D$ on dom$(D)$ has a compact parametrix and its index is equal to $N$.

Let $\delta_\beta: (\mathcal{K}^\infty)^+\to (\mathcal{K}^\infty)^+$ be the continuous derivation given by:
\begin{equation*}
\delta_\beta(a):=[\beta(\K),a]\,.
\end{equation*}
Then, for every $a\in(\mathcal{K}^\infty)^+$, and for every $f\in(\mathcal{K}^\infty)^+$, considered as an element of both $H_w$ and $H_{w'}$, we have the formula
$$D\rho_w(a)f -\rho_{w'}(a)Df = \rho_{w'}(\delta_\beta(a))f\,,$$
where the representation $\rho_w :\mathcal{K}^+ \to B(H_w)$ of $\mathcal{K}^+$ in $H_w$ is given by left multiplication: 
$$\rho_w(a)f=af.$$
The issue here (which was overlooked in \cite{KMP1}) is that the right-hand side of the above equation, namely:
\begin{equation*}
H_w\ni f\mapsto \rho_{w'}(\delta_\beta(a))f\in H_{w'}\,,
\end{equation*}
might possibly be an unbounded operator. However, if we assume that $w(k)$ and $w'(k)$ decrease polynomially, then the above operator has RD matrix coefficients in the natural bases and so it is bounded, and even a smooth compact operator for all $a\in(\mathcal{K}^\infty)^+$. Examples of such parameters were discussed in \cite{KMP1}. 

Let $H = H_{w'} \bigoplus H_w$, with grading $\Gamma \big|_{H_{w'}} =1$ and $\Gamma \big|_{H_w} = -1$. Define a representation $\rho :\mathcal{K}^+ \to B(H)$ of $\mathcal{K}^+$ in $H$ by the formula:
$$\rho(a) = (\rho_{w'}(a),\rho_w(a)),$$
and let
\begin{equation*}
\dcal = \left[
\begin{array}{cc}
0 & D \\
D^* & 0
\end{array}\right],
\end{equation*}
so that $\rho(a)$ are even and $\dcal$ is odd with respect to grading $\Gamma$. 
With the above notation, $((\mathcal{K}^\infty)^+,H,\dcal)$ forms an even spectral triple over $\mathcal{K}^+$.

To compute the pairing
$$
\langle [P_{0,0}]_0 , ((\mathcal{K}^\infty)^+,H,\dcal) \rangle \,,
$$
we first describe the range of $\rho_w(P_{0,0})$.  For $f\in H_w$ we have:
\begin{equation*}
P_{0,0}f=P_{0,0}\left(\sum_{n\geq 0}U^n f_n(\K)+\sum_{n< 0}f_n(\K)(U^*)^{-n}\right)=\sum_{n\leq0}f_n(0)P_{0,0}(U^*)^{-n}.
\end{equation*}
We use it to compute the restriction of $D$:
\begin{equation*}
\rho_{w'}(P_{0,0})D\rho_{w}(P_{0,0})f=P_{0,0}U\beta(\K)P_{0,0}f - P_{0,0}fU\alpha(\K)\,.
\end{equation*}
The first term above vanishes since $P_{0,0}U=0$. The second can be computed as follows:
\begin{equation*}
- P_{0,0}fU\alpha(\K)=-\sum_{n\leq0}f_n(0)P_{0,0}(U^*)^{-n}U\alpha(\K)=-\sum_{n\leq0}\alpha(-n)f_{n-1}(0)P_{0,0}(U^*)^{-n}.
\end{equation*}
From this formula we see that the operator
\begin{equation*}
\rho_{w'}(P_{0,0})D\rho_{w}(P_{0,0}): \textrm{Ran}( \rho_{w}(P_{0,0})) \to \textrm{Ran}(\rho_{w'}(P_{0,0}))
\end{equation*}
is unitarily equivalent to the operator
\begin{equation*}
d:\ell^2_w\supseteq \textrm{dom}(d)\to\ell^2_{w'}\,,
\end{equation*}
where we used the following notation:
$$\ell^2_w=\left\{f(k): \sum_{k=0}^\infty|f(k)|^2w(k)<\infty\right\},
$$
given on its maximal domain dom$(d)$ by
\begin{equation*}
df(k)=-\alpha(k)f(k+1)\,.
\end{equation*}
Clearly $D$ is a weighted unilateral shift with index equal to 1. This gives
$$
\langle [P_{0,0}]_0 , ((\mathcal{K}^\infty)^+,H,\dcal) \rangle=1 \,.
$$
If we choose the parameters so that index of $D$ is zero, then the above spectral triple $((\mathcal{K}^\infty)^+,H,\dcal)$ gives a generator of $K^0(\mathcal{K})$.

\subsubsection{Fredholm modules over $\mathcal{T}$} Again, it follows from the table that there are no topologically nontrivial odd Fredholm modules over $\mathcal{T}$. However, an even Fredholm module over $\mathcal{T}$ whose class generates $K^0(\mathcal{T})$ can be seen as a pullback of the Fredholm module in Proposition \ref{ev FM circle} by the map $K^0(q): K^0(C(\R / \Z)) \to K^0(\mathcal{T})$ in $K$-homology induced by the quotient $q: \mathcal{T} \to C(\R / \Z) $. The following proposition formalizes this observation. 
\begin{prop}
Consider the Fredholm module $(\{0\}\oplus \C, \rho_0, F_0, \Gamma)$ defined in Proposition \ref{ev FM circle}. $(\{0\}\oplus \C, \rho_0 \circ q, F_0, \Gamma)$ determines an even Fredholm module over $\mathcal{T}$ that generates $K^0(\mathcal{T})$. 
\end{prop}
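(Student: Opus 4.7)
The plan is to verify the two claims in the proposition separately: first, that $(\{0\}\oplus\C,\rho_0\circ q,F_0,\Gamma)$ is a valid even Fredholm module over $\mathcal{T}$, and second, that its class generates $K^0(\mathcal{T})\cong\Z$.

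For the first claim, I would observe that $q:\mathcal{T}\to C(\R/\Z)$ is a $*$-homomorphism and $\rho_0:C(\R/\Z)\to B(\{0\}\oplus\C)$ is a $*$-representation, so the composition $\rho_0\circ q$ is automatically a $*$-representation of $\mathcal{T}$ on the same two-dimensional Hilbert space. All the analytic conditions in the definition of a Fredholm module ($F_0^*-F_0,\ I-F_0^2\in\mathcal{K}$ and $[F_0,(\rho_0\circ q)(a)]\in\mathcal{K}$) are then trivially verified because the ambient Hilbert space is finite dimensional, so every bounded operator on it is compact. Compatibility of $\Gamma$ with the grading ($\Gamma F_0=-F_0\Gamma$, $\Gamma(\rho_0\circ q)(a)=(\rho_0\circ q)(a)\Gamma$) is inherited verbatim from Proposition \ref{ev FM circle} since only the representation is being modified.

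For the second claim, the strategy is to compute the index pairing of the class with the generator $[I]_0$ of $K_0(\mathcal{T})\cong\Z$ and show that it equals $\pm 1$. By Connes' index theorem \cite{Connes}, this pairing is computed in exactly the same manner as in the proof of Proposition \ref{ev FM circle}: one applies $\rho_0\circ q$ to the projection $I\in\mathcal{T}$, obtains $(\rho_0\circ q)(I)=\rho_0(q(I))=\rho_0(1)=0\oplus 1$, and the pairing reduces to the index of the zero operator from $\mathrm{Ran}(0\oplus 1)=\{0\}\oplus\C$ to the opposite graded component, namely $\{0\}$. This index is exactly $\dim\C-\dim\{0\}=1$, agreeing with the computation for $[1]_0\in K_0(C(\R/\Z))$ in Proposition \ref{ev FM circle}. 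Conceptually, this is just the naturality of the index pairing under the quotient $q$: pulling back a Fredholm module by $q$ corresponds to the induced map $K^0(q):K^0(C(\R/\Z))\to K^0(\mathcal{T})$, and dually $K_0(q):K_0(\mathcal{T})\to K_0(C(\R/\Z))$ sends $[I]_0\mapsto [1]_0$.

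Since the pairing with the generator $[I]_0$ of $K_0(\mathcal{T})\cong\Z$ equals $1$, the class of $(\{0\}\oplus\C,\rho_0\circ q,F_0,\Gamma)$ must generate $K^0(\mathcal{T})\cong\Z$. There is no substantive obstacle in the argument: once Proposition \ref{ev FM circle} is in hand, the proposition follows by naturality of the Connes pairing under the $*$-homomorphism $q$, with all analytic conditions reduced to triviality by finite-dimensionality.
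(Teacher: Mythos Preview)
Your proposal is correct and follows essentially the same approach as the paper: observe that $\rho_0\circ q$ is a $*$-representation, note that the analytic Fredholm-module conditions on $F_0$ were already verified in Proposition~\ref{ev FM circle}, and reduce the index pairing with $[I]_0$ to the already-computed pairing with $[1]_0$ via $(\rho_0\circ q)(I)=\rho_0(1)$. Your added remarks on finite-dimensionality and naturality of the pairing under $q$ make the same logic slightly more explicit, but there is no substantive difference.
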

\begin{proof}
Since $q: \mathcal{T} \to C(\R / \Z)$ is a $*$-homomorphism, $\rho_0 \circ q$ is indeed a $*$-representation. The required properties of $F$ have already been checked in Proposition \ref{ev FM circle}. Note that $ \rho_0 \circ q(I) = \rho_0(1)$. Hence, the pairing between the generator $[I]_0$ of $K_0(\mathcal{T})$ with the class of the Fredholm module $(\{0\}\oplus \C, \rho_0 \circ q, F_0, \Gamma)$ over $\mathcal{T}$ is the same as the pairing between the generator $[1]_0$ of $K_0(C(\R / \Z))$ and the class of the Fredholm module $(\{0\}\oplus \C, \rho_0, F_0, \Gamma)$ over $C(\R / \Z)$. This pairing was computed to be $1$ in Proposition \ref{ev FM circle}. It follows by the same reasoning as before that the class of $(\{0\}\oplus \C, \rho_0 \circ q, F_0, \Gamma)$  generates $K^0(\mathcal{T})$. 
\end{proof}

\subsection{Cyclic Homology}
Of various cyclic theories the local cyclic theory of Puschnigg \cite{Pu} has perhaps the best properties.
For C$^*$-algebras the standard periodic cyclic theory gives only basically trivial and pathological results, while local cyclic theory is well-behaved for C$^*$-algebras. The other advantage of local cyclic theory is that it is stable when passing to smooth subalgebras.

The Universal Coefficient Theorem for local cyclic homology \cite{Me} asserts that Chern-Connes character is an isomorphism
\begin{equation*}
K_*(A)\otimes\C\to HL_*(A)
\end{equation*}
if $A$ belongs to the bootstrap category. Here $ HL_*(A)$ are the local cyclic homology groups of $A$. It follows that we have the following descriptions of local cyclic homology groups.
\begin{prop}
The local cyclic homology groups of $C(\R / \Z), \mathcal{K}$, and $\mathcal{T}$ are equal to the local cyclic homology groups of their corresponding smooth subalgebras and are summarized in the table below. 
\begin{center}
\begin{tabular}{ |c|c| } 
 \hline
 $HL_0(C(\R / \Z))$ & $\C$ \\ 
 \hline
 $HL_0(\mathcal{K})$ & $\C$ \\ 
 \hline
  $HL_0(\mathcal{T})$ & $\C$  \\ 
 \hline
 $HL_1(C(\R / \Z))$ & $\C$ \\ 
 \hline
 $HL_1(\mathcal{K})$ & $0$ \\ 
 \hline
  $HL_1(\mathcal{T})$ & $0$ \\ 
  \hline
\end{tabular}
\end{center}
\end{prop}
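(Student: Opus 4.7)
The plan is to invoke the Universal Coefficient Theorem for local cyclic homology stated just before the proposition, applied to each of the three C$^*$-algebras $C(\R/\Z)$, $\mathcal{K}$, and $\mathcal{T}$. Earlier in this section it was already noted that all three of these algebras belong to the bootstrap category, so the hypothesis of the UCT is satisfied. Consequently, the Chern-Connes character gives isomorphisms
\begin{equation*}
K_*(A) \otimes \C \xrightarrow{\;\cong\;} HL_*(A)
\end{equation*}
for $A$ in each case.

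Next, I would feed in the $K$-theory computations from the preceding subsection. The table in the $K$-theory proposition gives $K_0(A) \cong \Z$ for each of $C^\infty(\R/\Z)$, $\mathcal{K}^\infty$, $\mathcal{T}^\infty$, while $K_1$ is $\Z$ for $C^\infty(\R/\Z)$ and zero for the other two. Since the smooth subalgebras are closed under the holomorphic functional calculus (Theorem on holomorphic stability of $\mathcal{K}^\infty$ and Theorem \ref{ToepStableProp} for $\mathcal{T}^\infty$, along with the well-known fact for $C^\infty(\R/\Z) \subseteq C(\R/\Z)$), the inclusions induce isomorphisms in $K$-theory, so the same groups describe $K_*$ of the C$^*$-algebra closures. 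Tensoring with $\C$ and applying the UCT isomorphism then yields the entries of the table: $\C$ in degree $0$ for all three algebras, $\C$ in degree $1$ for $C(\R/\Z)$, and $0$ in degree $1$ for $\mathcal{K}$ and $\mathcal{T}$.

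Finally, to justify the first assertion in the proposition, namely that the local cyclic homology of each C$^*$-algebra coincides with that of its smooth subalgebra, I would appeal to the stability property of local cyclic theory under passage to smooth (isoradial/holomorphically closed) subalgebras alluded to in the paragraph preceding the proposition, cf.\ \cite{Pu}, \cite{Me}. Alternatively, and more directly, one can apply the UCT isomorphism to both $A$ and its smooth subalgebra $A^\infty$ separately: holomorphic closedness gives $K_*(A^\infty) \cong K_*(A)$, and the naturality of the Chern-Connes character then forces $HL_*(A^\infty) \cong HL_*(A)$.

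There is essentially no technical obstacle here; the proof is an immediate application of the UCT combined with the $K$-theory table already established. The only point requiring a little care is verifying bootstrap-class membership and the stability of local cyclic homology under smooth subalgebra inclusion, both of which are quoted from the literature rather than proved here.
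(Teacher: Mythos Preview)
Your proposal is correct and matches the paper's approach exactly: the paper's proof is a single sentence stating that the result follows from the general properties of the $HL_*$ groups mentioned above (namely the UCT isomorphism $K_*(A)\otimes\C\cong HL_*(A)$ and stability under passage to smooth subalgebras). You have simply written out in more detail what that one sentence encodes.
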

\begin{proof} As mentioned above, the statements are a simple consequence of the general properties of the $HL_*$ groups.
\end{proof}

\end{document}